\def\authorsaddresses#1{\dedicatory{#1}}
\newtheorem{theorem}{Theorem}[section]
\newtheorem{lemma}[theorem]{Lemma}
\newtheorem{proposition}[theorem]{Proposition}
\newtheorem{corollary}[theorem]{Corollary}
\theoremstyle{definition}
\newtheorem{definition}[theorem]{Definition}
\newtheorem{example}[theorem]{Example}
\theoremstyle{remark}
\theoremstyle{approach}
\numberwithin{equation}{section}
\begin{document}
\setcounter{page}{1}


\title[Woven fusion frames in Hilbert spaces]{Woven fusion frames in Hilbert spaces}

\author[Asghar Rahimi, Zahra Samadzadeh and Bayaz Daraby]{A. Rahimi$ ^{*} $, Z. Samadzadeh$ ^{**} $, B. Daraby$^{\dagger}$ }

\authorsaddresses{$ ^{*,**,\dagger} $Department~of Mathematics,~University~of Maragheh,~Maragheh,~Iran.}
\keywords{Frame; Fusion frame; Perturbation; Riesz decomposition;Woven frame; Weaving frame; Woven fusion frame.\\
2010 AMS Subject Classification: 42C15, 42C30, 42C40.\\
Authors$^{,}$ email:~$ ^{*} $rahimi@maragheh.ac.ir;~$ ^{**} $z.samadzadeh@yahoo.com;~ $ ^{\dagger} $bdaraby@maragheh.ac.ir}
\begin{abstract}
A new notion in frame theory has been introduced recently that called woven frames. 
Woven and weaving frames are powerful tools for pre-processing signals and distributed data processing.
The purpose of introducing fusion frame or frame of subspace is to first construct local components and then build a global frame from these. This type of frame behaves as a generalization of frames. Motivating by the concepts of fusion and weaving frames, we investigate the notion woven-weaving fusion frames and present some of their features. Also, we study some effects of perturbations on woven frames and introduce Riesz decomposition of wovens and then we examine some of behaviors of this type of decomposition.
\end{abstract}
\maketitle

\section{Introduction}

Frames for Hilbert space were first introduced by Duffin and Schaeffer \cite{duffin} to study non-harmonic Fourier series in 1952. After some decades, Daubechies, Grossmann and Meyer reintroduced frames with extensive studies, in 1986 \cite{meyer} and popularized frames from then on. Such that, in the past thirty years, the frame theory became an attractive research and powerful tool for studies such as signal processing, image processing, data compression and sampling theory. 

Frames are generalizations of orthonormal bases in Hilbert spaces. A frame, as well as an orthonormal basis, allows each element in Hilbert space to be written as an infinite linear combination of the frame elements so that unlike the bases conditions, the coefficients might not be unique.

In the early 20'th century, new type of frames were presented to the scientific community, with name of frame of subspaces, which are now known as fusion frames. Fusion frames is a generalization of frames which were introduced by Cassaza and Kutyniok \cite{caku1} in 2003 and were investigated in \cite{caku2, khosravi1, khosravi2, asghari}. The significance of fusion frame is the construction of global frames from local frames in Hilbert space, so the characteristic fusion frame is special suiting for application such as distributed sensing, parallel processing, and packet encoding and so on.

In recent years, Bemrose et.al. introduced weaving frames \cite{wov1}, \cite{wov2}. From the point of view of its introducers, weaving frames are powerful tools for pre-processing signals and distributed data processing.

Improving and extending the notions of fusion and woven (weaving) frames, we investigate the new notion under the name woven (weaving) fusion frames and we prove some new results conserning the consepts fusion and woven.

The paper is organized as follows: Section 2 contains the basic definitions about frames and fusion frames. Section 3 belongs to preliminaries of woven-weving frames, introduces of the woven fusion frames, new notions and applications of them. In Section 4, we study the effects of perturbations on woven fusion frames. Finally, in Section 5, we mention to woven Riesz decomposition and bring some results of them.

\section{frames in Hilbert spaces}
As a preliminary of frames, at the first, we mention discrete frames and fusion frames. Through of this paper, $ \mathbb{I} $ is the indexing set where it can be finite or infinity countable set, $ \mathcal{H} $ is the separable Hilbert space, $ [m] $ is the natural numbers set $ \lbrace1, 2, \ldots, m\rbrace $ and $ P $ is the orthogonal projection.
\subsection{Discrete frame}
We review the definition and some properties of frames. For more information, see \cite{ole}.
\begin{definition}
A countable family of elements $ \left\lbrace f_{i}\right\rbrace _{i\in\mathbb{I} } $ in $ \mathcal{H}$ is a frame for $ \mathcal{H} $, if there exist constants
$ \mathcal{A}, \mathcal{B}>0 $ such that:
\begin{eqnarray*}
\label{1}
\mathcal{A}\Vert f\Vert^{2} \leq\sum_{i\in\mathbb{I}}\vert\left\langle f, f_{i}\right\rangle \vert^{2} \leq\mathcal{B}\Vert f\Vert^{2}, \quad \forall f\in\mathbb{I}.
\end{eqnarray*}
The numbers $ \mathcal{A} $ and $ \mathcal{B} $ are celled the lower and upper frame bounds, respectively. The frame
$ \left\lbrace f_{i}\right\rbrace _{i\in\mathbb{I} } $ is called tight frame, if
$ \mathcal{A}=\mathcal{B} $ and is called Parseval frame, if
$ \mathcal{A}=\mathcal{B}=1 $.
Also the sequence $ \left\lbrace f_{i}\right\rbrace _{i\in\mathbb{I} } $
is called Bessel sequence, if satisfy only the upper inequality. 

If for every  finite scalar sequences $ \left\lbrace c_{i}\right\rbrace _{i\in\mathbb{I} } $, there exist $ 0<\mathcal{A}\leq\mathcal{B}<\infty $ such that:
$$  \mathcal{A}\left( \sum_{i}\vert c_{i}\vert^{2}\right) ^{\frac{1}{2}}\leq \Vert \sum_{i}c_{i}f_{i}\Vert\leq \mathcal{B}\left( \sum_{i}\vert c_{i}\vert^{2}\right) ^{\frac{1}{2}}, $$
and also we have $ \mathcal{H}=\overline{\rm span}\left\lbrace f_{i}\right\rbrace_{i\in\mathbb{I} }  $, the family $ \left\lbrace f_{i}\right\rbrace _{i\in\mathbb{I} } $ is called a Riesz basis.
\end{definition}
Consider now a Hilbert space $ \mathcal{H} $ equipped with a frame
$ \left\lbrace f_{i}\right\rbrace _{i\in\mathbb{I} } $ and define the mapping:
$$ U: \mathcal{H}\longrightarrow\ell^{2}\left(\mathbb{I} \right),\quad U(f)=\left\lbrace \left\langle f, f_{i}\right\rangle \right\rbrace _{i\in\mathbb{I}}. $$
The operator $ U $ is usually called the analysis operator. The adjoint operator is given by:
$$ T:\ell^{2}(\mathbb{I})\longrightarrow\mathcal{H},\quad T\left\lbrace c_{i}\right\rbrace=\sum_{i\in\mathbb{I}}c_{i}f_{i}, $$
and is called the synthesis operator.
Composing $ U $ with its adjoint $ T $, we obtain the frame operator:
$$ S:\mathcal{H}\longrightarrow\mathcal{H},\quad S(f)= TU(f)=\sum_{i\in\mathbb{I}}\left\langle f, f_{i}\right\rangle f_{i}.$$
The operator $ S $ is positive, self-adjoint and invertible and every $ f\in\mathcal{H} $ can be represented as:
$$ f=\sum_{i\in\mathbb{I}}\left\langle f, S^{-1}f_{i}\right\rangle f_{i}=\sum_{i\in\mathbb{I}}\left\langle f, f_{i}\right\rangle S^{-1}f_{i}. $$
The family $ \left\lbrace S^{-1}f_{i}\right\rbrace _{i\in\mathbb{I}} $ is also a frame and it is called the standard dual frame of
$ \left\lbrace f_{i}\right\rbrace _{i\in\mathbb{I}} $.
\subsection{Fusion frame}
In 2003, a new type of generalization of frames were introduced by Cassaza and Kutyniok to the science world that today we know them as fusion frames. In this section, we briefly recall some basic notations, definitions and some important properties of fusion frames that are useful for our study. For more detailed information one can see \cite{asghari, caku1, caku2, khosravi1, khosravi2}.
\begin{definition}
Let $ \left\lbrace \nu_{i}\right\rbrace_{i\in\mathbb{I}}$ be a family of weights such that $ \nu_{i}>0 $ for all $ i\in\mathbb{I} $.
A family of closed subspaces $ \left\lbrace W_{i}\right\rbrace_{i\in\mathbb{I}}$ of a Hilbert space $ \mathcal{H} $ is called a fusion frame (or frame of subspaces) for $ \mathcal{H} $ with respect to weights $ \left\lbrace \nu_{i}\right\rbrace_{i\in\mathbb{I}}$, if there exist constants $ \mathcal{C}, \mathcal{D}>0 $ such that:
\begin{eqnarray}
\label{2}
\mathcal{C}\Vert f\Vert^{2}\leq \sum_{i\in\mathbb{I}}\nu_{i}^{2}\Vert P_{W_{i}}(f)\Vert^{2}\leq \mathcal{D}\Vert f\Vert^{2},
\quad\forall f\in\mathcal{H},
\end{eqnarray}
where $ P_{W_{i}} $ is the orthogonal projection of $ \mathcal{H} $ onto $ W_{i} $.
The constants $ \mathcal{C} $ and $ \mathcal{D} $ are called the lower and upper fusion frame bounds, respectively. If the second inequality in  (\ref{2}) holds, the family of subspace 
$ \left\lbrace W_{i}\right\rbrace_{i\in\mathbb{I}}$ is called a Bessel sequence of subspaces with respect to
$ \left\lbrace \nu_{i}\right\rbrace_{i\in\mathbb{I}}$ with Bessel bound $ \mathcal{D} $.
Also $ \left\lbrace W_{i}\right\rbrace_{i\in\mathbb{I}}$ is called tight fusion frame with respect to
$ \left\lbrace \nu_{i}\right\rbrace_{i\in\mathbb{I}}$, if $ \mathcal{C}=\mathcal{D} $ and is called a Parseval fusion frame if $ \mathcal{C}=\mathcal{D}=1$. We say
$ \left\lbrace W_{i}\right\rbrace_{i\in\mathbb{I}}$ an orthonormal fusion basis for $ \mathcal{H} $, if
$ \mathcal{H}=\bigoplus_{i\in\mathbb{I}}W_{i} $.
\end{definition}

\begin{definition}
The fusion frame $ \left\lbrace W_{i}\right\rbrace _{i\in\mathbb{I} } $ with respect to some family of weights is called a Riesz decomposition of $ \mathcal{H} $, if for every $ f\in\mathcal{H} $, there is a unique choice of $ f_{i}\in W_{i} $ so that $ f=\sum_{i\in\mathbb{I}}f_{i}. $
\end{definition}

\begin{flushleft}
{\LARGE Notation:}
\end{flushleft}
For each family of subspaces $ \left\lbrace W_{i} \right\rbrace _{i\in\mathbb{I}} $ of $ \mathcal{H} $, the representation space:
$$ \left( \sum_{i\in\mathbb{I}}\bigoplus W_{i}\right) _{\ell_{2}}=\left\lbrace \left\lbrace f_{i}\right\rbrace _{i\in\mathbb{I}}\vert f_{i}\in W_{i}\quad and \quad \sum_{i\in \mathbb{I}}\Vert f_{i}\Vert^{2} < \infty\right\rbrace  , $$
with inner product
$$ \left\langle \left\lbrace f_{i}\right\rbrace _{i\in\mathbb{I}}, \left\lbrace g_{i}\right\rbrace _{i\in\mathbb{I}}\right\rangle =\sum_{i\in\mathbb{I}} \left\langle f_{i}, g_{i}\right\rangle $$
is a Hilbert space. This space is needed in the studying of fusion systems.

We require the following lemma to define the analysis, synthesis and fusion frame operators \cite{caku1}.
\begin{lemma}
Let $ \left\lbrace W_{i}\right\rbrace _{i\in\mathbb{I}} $ be a Bessel sequence of subspaces with respect to
$ \left\lbrace \nu_{i}\right\rbrace _{i\in\mathbb{I}} $ for $ \mathcal{H} $. Then for each sequence $ \left\lbrace f_{i}\right\rbrace _{i\in\mathbb{I}} $ with $ f_{i}\in W_{i} $, $ i\in\mathbb{I} $, the series $ \sum_{i\in\mathbb{I}}\nu_{i}f_{i} $ converges unconditionally.
\end{lemma}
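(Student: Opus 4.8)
The plan is to prove the quantitative estimate
$$ \left\| \sum_{i\in J}\nu_i f_i \right\| \le \sqrt{\mathcal{D}}\left( \sum_{i\in J}\|f_i\|^2\right)^{1/2} $$
for every finite subset $J\subseteq\mathbb{I}$, and then to deduce unconditional convergence from the Cauchy criterion. Here I read the hypothesis as asserting that $\left\lbrace f_i\right\rbrace_{i\in\mathbb{I}}$ belongs to the representation space $\left( \sum_{i\in\mathbb{I}}\bigoplus W_i\right)_{\ell_2}$, so that $\sum_{i\in\mathbb{I}}\|f_i\|^2<\infty$; this square-summability is precisely what will force the tails of the series to vanish, since $\mathcal{D}$ is fixed.

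First I would fix a finite $J\subseteq\mathbb{I}$ and estimate the norm of the partial sum by duality. For any $g\in\mathcal{H}$ with $\|g\|=1$, since each $f_i\in W_i$ we have $P_{W_i}f_i=f_i$, and as $P_{W_i}$ is self-adjoint this gives $\left\langle f_i,g\right\rangle=\left\langle P_{W_i}f_i,g\right\rangle=\left\langle f_i,P_{W_i}g\right\rangle$. Hence
$$ \left| \Big\langle \sum_{i\in J}\nu_i f_i,\, g\Big\rangle\right| = \left| \sum_{i\in J}\nu_i\left\langle f_i,P_{W_i}g\right\rangle\right| \le \sum_{i\in J}\|f_i\|\,\nu_i\|P_{W_i}g\|. $$
Applying the Cauchy--Schwarz inequality in $\ell_2(J)$ and then the upper (Bessel) inequality from the definition of a Bessel sequence of subspaces yields
$$ \left| \Big\langle \sum_{i\in J}\nu_i f_i,\, g\Big\rangle\right| \le \left(\sum_{i\in J}\|f_i\|^2\right)^{1/2}\left(\sum_{i\in J}\nu_i^2\|P_{W_i}g\|^2\right)^{1/2} \le \sqrt{\mathcal{D}}\left(\sum_{i\in J}\|f_i\|^2\right)^{1/2}. $$
Taking the supremum over all unit vectors $g$ gives the claimed estimate.

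Finally I would conclude. Since $\sum_{i\in\mathbb{I}}\|f_i\|^2<\infty$, for every $\varepsilon>0$ there is a finite set $J_0\subseteq\mathbb{I}$ with $\sum_{i\notin J_0}\|f_i\|^2<\varepsilon^2/\mathcal{D}$; the estimate above then shows $\left\| \sum_{i\in J}\nu_i f_i\right\|<\varepsilon$ for every finite $J$ disjoint from $J_0$. This is exactly the Cauchy criterion characterizing unconditional convergence of a series in a Hilbert space, so $\sum_{i\in\mathbb{I}}\nu_i f_i$ converges unconditionally. The only delicate points are the duality step, where the self-adjointness of the projections is essential in order to bring the Bessel bound into play, and the appeal to the Cauchy criterion, which must be phrased for arbitrary finite index sets rather than for initial segments, since $\mathbb{I}$ carries no preferred order — this is precisely the content of \emph{unconditional} (as opposed to merely ordered) convergence.
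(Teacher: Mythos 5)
Your proof is correct, and the quantitative estimate $\bigl\Vert \sum_{i\in J}\nu_i f_i\bigr\Vert \le \sqrt{\mathcal{D}}\bigl(\sum_{i\in J}\Vert f_i\Vert^2\bigr)^{1/2}$ via duality, $P_{W_i}f_i=f_i$, self-adjointness of the projections, Cauchy--Schwarz and the Bessel bound, followed by the Cauchy criterion for summability over arbitrary finite sets, is exactly the standard argument. Be aware that the paper itself offers no proof to compare against: the lemma is stated and attributed to \cite{caku1}, so your write-up in effect supplies the proof from that reference. You were also right to repair the hypothesis: as literally stated the lemma is false, since without the requirement $\left\lbrace f_i\right\rbrace_{i\in\mathbb{I}}\in\left(\sum_{i\in\mathbb{I}}\bigoplus W_i\right)_{\ell_2}$ (i.e.\ $\sum_{i\in\mathbb{I}}\Vert f_i\Vert^2<\infty$) one can easily choose $f_i\in W_i$ making $\sum_{i\in\mathbb{I}}\nu_i f_i$ divergent; your reading restores the statement of \cite{caku1}, and your remark that this square-summability is what kills the tails is precisely where that hypothesis enters.
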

\begin{definition}
Let $ \left\lbrace W_{i}\right\rbrace _{i\in\mathbb{I} } $ be a fusion frame for $ \mathcal{H} $ with respect to
$ \left\lbrace \nu_{i}\right\rbrace _{i\in\mathbb{I} } $. Then the analysis operator for $ \left\lbrace W_{i}\right\rbrace _{i\in\mathbb{I} } $ with weights $ \left\lbrace \nu_{i}\right\rbrace _{i\in\mathbb{I} } $ is defined by:
$$ U_{W, \nu }:\mathcal{H}\longrightarrow \left( \sum_{i\in\mathbb{I}}\bigoplus W_{i}\right) _{\ell_{2}}, \quad U_{W,\nu}(f)= \left\lbrace \nu_{i}P_{W_{i}}(f)\right\rbrace _{i\in\mathbb{I} }.$$
The adjoint of $ U_{W,\nu} $ is called the synthesis operator, we denote
$ T_{W,\nu}=U_{W,\nu}^{*} $. By elementary calculation, we have
$$ T_{W,\nu}:\left( \sum_{i\in\mathbb{I}}\bigoplus W_{i}\right) _{\ell_{2}}\longrightarrow\mathcal{H}, \quad T_{W,\nu}\left( \left\lbrace f_{i}\right\rbrace_{i\in\mathbb{I}} \right) =\sum_{i\in\mathbb{I}} \nu_{i}f_{i}, $$
like discrete frames, the fusion frame operator
$ \left\lbrace W_{i}\right\rbrace _{i\in\mathbb{I} } $ with respect to
$ \left\lbrace \nu_{i}\right\rbrace _{i\in\mathbb{I} } $ is the composition of analysis and synthesis operators,
$$ S_{W,\nu}:\mathcal{H}\longrightarrow\mathcal{H}, \quad S_{W,\nu}(f)=T_{W,\nu}U_{W,\nu}(f)=\sum_{i\in\mathbb{I}}\nu_{i}^{2}P_{W_{i}}(f),~~~ \forall f\in\mathcal{H}.$$
\end{definition}
The following theorem present the equivalence conditions between the fusion frames and their operators.
\begin{theorem}
\cite{caku1}
Let $ \left\lbrace W_{i}\right\rbrace _{i\in\mathcal{H}} $ be a family of subspaces in $ \mathcal{H} $ and
$ \left\lbrace \nu_{i}\right\rbrace _{i\in\mathcal{H}} $ be a family of weights. Then the following conditions are equivalent.
\begin{enumerate}
\item[(i)] $ \left\lbrace W_{i}\right\rbrace _{i\in\mathcal{H}} $ is a fusion frame with respect to $ \left\lbrace \nu_{i}\right\rbrace _{i\in\mathcal{H}} $,
\item[(ii)] The synthesis operator $ T_{W,\nu} $ is bounded, linear and onto,
\item[(iii)] The analysis operator $ U_{W, \nu} $ is a (possibly into) isomorphism.
\end{enumerate}
\end{theorem}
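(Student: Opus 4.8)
The plan is to funnel all three statements through the single operator $U_{W,\nu}$ and its adjoint $T_{W,\nu}=U_{W,\nu}^{*}$, reducing the theorem to standard facts about bounded operators between Hilbert spaces. First I would record the elementary identity
$$\Vert U_{W,\nu}(f)\Vert^{2}=\sum_{i\in\mathbb{I}}\nu_{i}^{2}\Vert P_{W_{i}}(f)\Vert^{2},\qquad f\in\mathcal{H},$$
which follows at once from $U_{W,\nu}(f)=\left\lbrace\nu_{i}P_{W_{i}}(f)\right\rbrace_{i\in\mathbb{I}}$ and the definition of the norm on $\left(\sum_{i\in\mathbb{I}}\bigoplus W_{i}\right)_{\ell_{2}}$. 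Consequently the fusion frame inequalities (\ref{2}) are equivalent to the two-sided bound $\sqrt{\mathcal{C}}\,\Vert f\Vert\leq\Vert U_{W,\nu}(f)\Vert\leq\sqrt{\mathcal{D}}\,\Vert f\Vert$ for every $f\in\mathcal{H}$.

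For the implication (i) $\Rightarrow$ (iii) I would read off from this bound that the right-hand inequality makes $U_{W,\nu}$ bounded with $\Vert U_{W,\nu}\Vert\leq\sqrt{\mathcal{D}}$, while the left-hand inequality makes $U_{W,\nu}$ bounded below; a bounded, bounded-below linear map is injective with closed range, hence a (possibly into) isomorphism. Conversely, (iii) $\Rightarrow$ (i) is just the remark that any such isomorphism is automatically bounded and bounded below, and these two properties reproduce the upper and lower fusion frame bounds $\mathcal{D}=\Vert U_{W,\nu}\Vert^{2}$ and $\mathcal{C}=(\inf_{\Vert f\Vert=1}\Vert U_{W,\nu}f\Vert)^{2}$. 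This settles (i) $\Leftrightarrow$ (iii) with no real difficulty.

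The substance of the argument lies in (ii) $\Leftrightarrow$ (iii), where I would invoke the duality between an operator and its adjoint. Since $T_{W,\nu}=U_{W,\nu}^{*}$ and hence $U_{W,\nu}=T_{W,\nu}^{*}$, boundedness of one of $T_{W,\nu}$, $U_{W,\nu}$ is equivalent to boundedness of the other, with equal norms. The key point is the standard operator-theoretic fact that a bounded operator between Hilbert spaces is surjective if and only if its adjoint is bounded below: applying this to $T_{W,\nu}$ gives that $T_{W,\nu}$ is onto precisely when $U_{W,\nu}=T_{W,\nu}^{*}$ is bounded below. Combining the norm equivalence with this surjectivity criterion yields (ii) $\Leftrightarrow$ (iii).

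I expect the surjectivity-versus-bounded-below equivalence to be the main obstacle, as it is the only nonformal step. The cleanest route uses the orthogonality relation $\overline{\mathrm{ran}}(T_{W,\nu})=\ker(T_{W,\nu}^{*})^{\perp}=\ker(U_{W,\nu})^{\perp}$ together with the closed range theorem. If $U_{W,\nu}$ is bounded below then $\ker(U_{W,\nu})=\{0\}$, so $\mathrm{ran}(T_{W,\nu})$ is dense; being bounded below also makes $\mathrm{ran}(U_{W,\nu})$ closed, whence $\mathrm{ran}(T_{W,\nu})$ is closed by the closed range theorem, and therefore $\mathrm{ran}(T_{W,\nu})=\mathcal{H}$. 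For the converse I would recover the lower bound on $U_{W,\nu}=T_{W,\nu}^{*}$ from surjectivity of $T_{W,\nu}$ by way of the open mapping theorem. One should also verify at the outset that $T_{W,\nu}$ really is the adjoint of $U_{W,\nu}$ on the representation space — the content of the ``elementary calculation'' noted after its definition, which rests on the convergence of $\sum_{i\in\mathbb{I}}\nu_{i}f_{i}$ furnished by the preceding lemma.
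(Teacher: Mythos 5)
The paper does not actually prove this theorem: it is stated as a quoted result from \cite{caku1}, so there is no in-paper argument to compare against. Your proposal is correct and is essentially the standard proof found in that reference: the fusion frame inequalities are precisely the two-sided bound $\sqrt{\mathcal{C}}\,\Vert f\Vert\leq\Vert U_{W,\nu}(f)\Vert\leq\sqrt{\mathcal{D}}\,\Vert f\Vert$, which gives $\rm (i)\Leftrightarrow(iii)$, and $\rm (ii)\Leftrightarrow(iii)$ is the duality ``a bounded operator between Hilbert spaces is onto if and only if its adjoint is bounded below,'' proved exactly as you indicate via the closed range theorem and the open mapping theorem. The only point worth tightening is the one you already flag: for a general family $\left\lbrace W_{i}\right\rbrace_{i\in\mathbb{I}}$ (not assumed Bessel), $U_{W,\nu}$ need not map into the representation space and the series defining $T_{W,\nu}$ need not converge, so one should first define $T_{W,\nu}$ on finitely supported sequences and read condition $\rm (ii)$ as the existence of a bounded surjective extension; with that reading, your argument goes through unchanged.
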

\section{Woven frame}
Woven frames in Hilbert spaces, were introduced in 2015 by Bemrose et al. \cite{wov1, wov2, wov3}, after that, Vashisht,  Deepshikha, and etc. have done more research \cite{vd2, vd3, vd4, vd5}. They have studied a variety of different types of generalized weaving frames, such as g-frame, K-frame, and continuous frame.  In the following, we mention the definition of woven frames with an example.
\begin{definition}
 Let $ F= \left\lbrace f_{ij}\right\rbrace _{i\in\mathbb{I} } $ for $ j\in\left[ m\right] $ be a family of frames for separable Hilbert space
 $ \mathcal{H} $. If there exist universal constants $ \mathcal{A^{\prime}}$ and $\mathcal{B^{\prime}} $, such that for every partition
$ \left\lbrace \sigma_{j}\right\rbrace _{j\in\left[ m\right] } $, the family
$ F_{j}=\left\lbrace f_{ij}\right\rbrace _{i\in\sigma_{j}} $ is a frame for $ \mathcal{H} $ with bounds
$ \mathcal{A^{\prime}}$ and $\mathcal{B^{\prime}} $, then $ F $ is said  woven frames and for every
$ j\in\left[ m\right] $ and the frames $ F_{j} $ are called weaving frame.
\end{definition}
Now, we introduce two frames in the Euclidean space that form woven frames.
 \begin{example}
Let $ \left\lbrace e_{i}\right\rbrace_{i=1}^{2} $ be the standard basis for Euclidean space $ \mathbb{R}^{2} $.
Let $ F $ and $ G $ be the sets:
$$ F=\left\lbrace f_{i}\right\rbrace _{i=1}^{3}=\left\lbrace 2e_{2}, 3e_{1}, 2e_{1}+3e_{2}\right\rbrace  $$
and
$$ G=\left\lbrace g_{i}\right\rbrace _{i=1}^{3}=\left\lbrace e_{1}, e_{2}, 3e_{1}+e_{2}\right\rbrace . $$
$ F $ and $ G $ are frames for Euclidean space $ \mathbb{R}^{2} $. For any $ f\in\mathbb{R}^{2} $
$$ \sum_{i=1}^{3}\vert\left\langle f,f_{i}\right\rangle \vert^{2}=\vert\left\langle f, f_{1}\right\rangle\vert^{2} +\vert\left\langle f, f_{2}\right\rangle\vert^{2} + \vert\left\langle f, f_{3}\right\rangle\vert^{2} , $$
 therefor we have
$$ 4\Vert f\Vert^{2}\leq\sum_{i=1}^{3}\vert\left\langle f,f_{i}\right\rangle \vert^{2}\leq 22\Vert f\Vert^{2}, \quad \forall f\in\mathbb{R}^{2}.$$
So $ F $ is a frame with lower and upper bounds $ 4 $ and $ 22 $, respectively. It is important to note that, these bounds may not be optimal. Similarly, $ G $ is a frame with bounds $ 1 $ and $ 19 $. The frames $ F $ and $ G $ constitute a woven frame. For example, if we assume that
$ \sigma_{1} =\left\lbrace 1,2 \right\rbrace $, then for any $ f $
\begin{eqnarray*}
\sum_{i\in\sigma_{1}}\vert\left\langle f,f_{i}\right\rangle \vert^{2}+\sum_{i\in\sigma_{1}^{c}}\vert\left\langle f,g_{i}\right\rangle \vert^{2}
= \vert\left\langle f,f_{1}\right\rangle \vert^{2}+\vert\left\langle f,f_{2}\right\rangle \vert^{2}+ \vert\left\langle f,g_{3}\right\rangle \vert^{2},
\end{eqnarray*}
therefor we have
\begin{eqnarray*}
4\Vert f\Vert^{2}\leq \sum_{i\in\sigma_{1}}\vert\left\langle f,f_{i}\right\rangle \vert^{2}+
\sum_{i\in\sigma_{1}^{c}}\vert\left\langle f,g_{i}\right\rangle \vert^{2}\leq 27 \Vert f\Vert^{2}.
\end{eqnarray*}
So $ \left\lbrace f_{i}\right\rbrace _{i\in\sigma_{1}}\bigcup\left\lbrace g_{i}\right\rbrace _{i\in\sigma_{1}^{c} }$
is frame with lower and upper bounds $ \mathcal{A}^{\prime}_{1}=4 $ and
$ \mathcal{B}^{\prime}_{1}=27 $, respectively. Similarly for every
$ \sigma_{j}\subset\left\lbrace 1,2,3\right\rbrace  $, for
$ 1\leq j \leq 8 $,
$ \left\lbrace f_{i}\right\rbrace _{i\in\sigma_{j}}\bigcup\left\lbrace g_{i}\right\rbrace _{i\in\sigma_{j}^{c}} $ is frame. Then
$ \left\lbrace f_{i}\right\rbrace _{i=1}^{3} $ and $ \left\lbrace g_{i}\right\rbrace _{i=1}^{3} $ are woven frames with universal bounds
$ \mathcal{A}^{\prime}=\min_{1\leq j\leq 8} \mathcal{A}^{\prime}_{j} $ and
$ \mathcal{B}^{\prime}=\min_{1\leq j\leq 8}\mathcal{B}^{\prime}_{j} $.
 \end{example}
The following theorem shows that woven frames and invariant under a bounded are invertible operator with different bounds.
\begin{theorem}
\label{T3}
Let $ \left\lbrace f_{ij}\right\rbrace _{i\in\mathbb{I},j\in\left[ m\right]  } $ be woven frame for $ \mathcal{H} $ with universal bounds
$ \mathcal{A}^{\prime} $ and $ \mathcal{B}^{\prime} $. If $ E $ is bounded and invertible operator on $ \mathcal{H} $,
then $ \left\lbrace Ef_{ij}\right\rbrace _{i\in\mathbb{I},j\in\left[ m\right]  } $ is woven frame for $ \mathcal{H} $ with universal bounds
$ \mathcal{A}^{\prime}\left\Vert E^{-1}\right\Vert^{-2} $ and $ \mathcal{B}^{\prime}\left\Vert E\right\Vert^{2} $.
\end{theorem}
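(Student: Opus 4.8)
The plan is to reduce the statement to the defining inequality for woven frames by moving $E$ onto the other side of each inner product as its adjoint $E^{*}$, and then to control the resulting quantity $\Vert E^{*}f\Vert$ both above and below using the boundedness and invertibility of $E$. The crucial structural observation is that the universal bounds $\mathcal{A}^{\prime}$ and $\mathcal{B}^{\prime}$ hold \emph{simultaneously for every partition}; consequently, any estimate I produce that does not depend on the partition will again be universal for $\left\lbrace Ef_{ij}\right\rbrace$.

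First I would fix an arbitrary partition $\left\lbrace \sigma_{j}\right\rbrace_{j\in[m]}$ and an arbitrary $f\in\mathcal{H}$, and rewrite the weaving sum via the adjoint identity $\left\langle f, Ef_{ij}\right\rangle = \left\langle E^{*}f, f_{ij}\right\rangle$:
$$ \sum_{j\in[m]}\sum_{i\in\sigma_{j}}\vert\left\langle f, Ef_{ij}\right\rangle\vert^{2} = \sum_{j\in[m]}\sum_{i\in\sigma_{j}}\vert\left\langle E^{*}f, f_{ij}\right\rangle\vert^{2}. $$
Since $\left\lbrace f_{ij}\right\rbrace$ is woven with universal bounds $\mathcal{A}^{\prime}$ and $\mathcal{B}^{\prime}$, applying its frame inequality to the vector $E^{*}f$ for this same partition yields
$$ \mathcal{A}^{\prime}\Vert E^{*}f\Vert^{2} \leq \sum_{j\in[m]}\sum_{i\in\sigma_{j}}\vert\left\langle E^{*}f, f_{ij}\right\rangle\vert^{2} \leq \mathcal{B}^{\prime}\Vert E^{*}f\Vert^{2}. $$

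Next I would convert the bounds on $\Vert E^{*}f\Vert$ into bounds on $\Vert f\Vert$. For the upper estimate, $\Vert E^{*}f\Vert \leq \Vert E^{*}\Vert\,\Vert f\Vert = \Vert E\Vert\,\Vert f\Vert$ gives the upper woven bound $\mathcal{B}^{\prime}\Vert E\Vert^{2}$. For the lower estimate, writing $f = (E^{*})^{-1}(E^{*}f)$ and using $\Vert (E^{*})^{-1}\Vert = \Vert E^{-1}\Vert$ produces $\Vert f\Vert \leq \Vert E^{-1}\Vert\,\Vert E^{*}f\Vert$, hence $\Vert E^{*}f\Vert^{2} \geq \Vert E^{-1}\Vert^{-2}\Vert f\Vert^{2}$, which delivers the lower woven bound $\mathcal{A}^{\prime}\Vert E^{-1}\Vert^{-2}$. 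Because both constants are independent of the chosen partition $\left\lbrace \sigma_{j}\right\rbrace$, they qualify as universal bounds for $\left\lbrace Ef_{ij}\right\rbrace$.

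I do not anticipate a genuine obstacle here: the whole argument rests on the identity $\left\langle f, Ef_{ij}\right\rangle = \left\langle E^{*}f, f_{ij}\right\rangle$ together with the elementary operator-norm facts $\Vert E^{*}\Vert = \Vert E\Vert$ and $\Vert (E^{*})^{-1}\Vert = \Vert E^{-1}\Vert$. The only point deserving a moment's attention is recognizing that it is precisely the invertibility of $E$ (equivalently of $E^{*}$) that legitimizes the lower bound; without invertibility one could establish only the Bessel/upper half of the conclusion.
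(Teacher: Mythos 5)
Your proof is correct and follows essentially the same route as the paper: both arguments move $E$ across the inner product as $E^{*}$, apply the universal bounds of $\left\lbrace f_{ij}\right\rbrace$ to the vector $E^{*}f$, and control $\Vert E^{*}f\Vert$ via $\Vert E^{*}\Vert=\Vert E\Vert$ for the upper bound and the identity $f=(E^{*})^{-1}E^{*}f$ (the paper writes this as inserting $(EE^{-1})^{*}$ after using surjectivity to write $g=Ef$) for the lower bound. Your version is in fact slightly cleaner, since the paper's detour through surjectivity is not actually needed.
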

\begin{proof}
Since $ \left\lbrace f_{ij}\right\rbrace _{i\in\mathbb{I},j\in\left[ m\right]  } $ is a woven frame for $ \mathcal{H} $, then for every
$ \sigma_{j}\subset\mathbb{I}$, $ j\in\left[ m\right] $, the sequence $\left\lbrace f_{ij}\right\rbrace _{i\in\sigma_{j},j\in\left[ m\right]  }$ is a frame with bounds $ \mathcal{A}^{\prime} $ and $ \mathcal{B}^{\prime} $. The boundedness of $ E $ verifies the upper bound
$$\sum_{i\in\sigma_{j}, j\in[m]}\left\vert\left\langle f,Ef_{ij}\right\rangle \right\vert^{2}\leq\mathcal{B}^{\prime}\left\Vert E^{\ast}f\right\Vert^{2}\leq
\mathcal{B}^{\prime}\left\Vert E^{\ast}\right\Vert^{2} \left\Vert f\right\Vert^{2}=
\mathcal{B}^{\prime}\left\Vert E\right\Vert^{2} \left\Vert f\right\Vert^{2}.$$
For lower bound, we assume that $ g\in\mathcal{H} $. Since $ E $ is surjective, there exist $ f\in\mathcal{H} $ such that
$ Ef=g $. Therefor we have:
\begin{eqnarray*}
\left\Vert g\right\Vert^{2}&=&\left\Vert Ef\right\Vert^{2}\\
&=&\left\Vert \left( EE^{-1}\right) ^{*}Ef\right\Vert^{2}\\
&=&\left\Vert \left( E^{-1}\right) ^{*}E^{*}Ef\right\Vert^{2}\\
&\leq & \left\Vert E^{-1}\right\Vert^{2}\left\Vert E^{*}Ef\right\Vert^{2}\\
&\leq & \frac{\left\Vert E^{-1}\right\Vert^{2}}{\mathcal{A}^{\prime}}\sum_{i\in\sigma_{j} ,j\in [m]}
\left\vert \left\langle E^{*}Ef,f_{ij}\right\rangle \right\vert^{2}\\
&=& \frac{\left\Vert E^{-1}\right\Vert^{2}}{\mathcal{A}^{\prime}}\sum_{i\in\sigma_{j} ,j\in [m]}
\left\vert \left\langle Ef,Ef_{ij}\right\rangle \right\vert^{2}.
\end{eqnarray*}
So:
$$  \mathcal{A}^{\prime}\left\Vert E^{-1}\right\Vert^{-2}\left\Vert g\right\Vert^{2}\leq\sum_{i\in\sigma_{j},j\in [m]}
\left\vert \left\langle Ef,Ef_{ij}\right\rangle \right\vert^{2}. $$
\end{proof}

\subsection{Woven fusion frames}
Extending and improving the notions of fusion and weaving frames, we introduce the woven fusion frames and we show that the equivalence of discrete frames and bases with woven fusion frames and examin effects of operators on those. Also, we present some results for this type of frames in the examples.
\begin{definition}
A family of fusion frames $ \left\lbrace W_{ij}\right\rbrace _{i=1}^{\infty}$, for $ j\in\left[ m \right] $
, with respect to weights
$ \lbrace\nu_{ij}\rbrace_{i\in \mathbb{I}, j\in\left[ m\right] }  $, is said woven fusion frames if there are universal constant $ \mathcal{A} $ and $ \mathcal{B} $, such that for every partition
$ \lbrace \sigma_{j}\rbrace_{j\in\left[ m \right] } $ of $ \mathbb{I} $ , the family
$ \lbrace W_{ij}\rbrace_{i\in \sigma_{j}, j\in\left[ m\right]  } $ is a fusion frame for $ \mathcal{H} $ with lower and upper frame bounds $ \mathcal{A} $ and $ \mathcal{B} $. Each family $ \lbrace W_{ij}\rbrace_{i\in \sigma_{j}, j\in\left[ m\right]  } $ is called a weaving fusion frame.
\end{definition}

For abrivation, we use W.F.F instead of the statement of woven fusion frame. Also, note that through of this paper, the sequence 
$ \left\lbrace f_{i,j}\right\rbrace_{i,j}  $
is different from the family of sequences 
$ \left\lbrace f_{ij}\right\rbrace_{i,j}  $ in the definition of woven frames.

The following theorem states the equivalence conditions between woven frames and woven fusion frames (W.F.F).

\begin{theorem}
\label{T1}
Suppose for every $ i\in \mathbb{I} $, $ \mathbb{J}_{i} $ is a subset of the index set $ \mathbb{I} $ and $ \nu_{i}, \mu_{i}>0 $. Let $ \lbrace f_{i,j}\rbrace_{j\in \mathbb{J}_{i}} $ and $ \lbrace g_{i,j}\rbrace_{j\in \mathbb{J}_{i}} $ be frame sequences in $ \mathcal{H} $ with frame bounds $ \left( \mathcal{A}_{f_{i}}, \mathcal{B}_{f_{i}}\right)  $ and $ \left( \mathcal{A}_{g_{i}}, \mathcal{B}_{g_{i}}\right)  $ respectively. Define
$$ W_{i}=\overline{\rm span} \left\lbrace f_{i,j}\right\rbrace_{j\in \mathbb{J}_{i}}  , \quad   V_{i}=\overline{{\rm span}} \left\lbrace g_{i,j}\right\rbrace_{j\in \mathbb{J}_{i}} ,\quad \forall i\in \mathbb{I}, $$
and choose orthonormal bases $ \lbrace e_{i,j}\rbrace_{j\in \mathbb{J}_{i}} $ and $ \lbrace e^{\prime}_{i,j}\rbrace_{j\in \mathbb{J}_{i}} $ for each subspaces $ W_{i} $ and $ V_{i} $, respectively. Suppose that
$$ 0<\mathcal{A}_{f}= \inf_{i\in \mathbb{I}}\mathcal{A}_{f_{i}} \leq \mathcal{B}_{f}= \sup_{i\in\mathbb{I}}\mathcal{B}_{g_{i}}< \infty $$
and
$$ 0<\mathcal{A}_{g}= \inf_{i\in \mathbb{I}}\mathcal{A}_{f_{i}} \leq \mathcal{B}_{g}= \sup_{i\in\mathbb{I}}\mathcal{B}_{g_{i}}<\infty . $$
Then the following conditions are equivalent:
\begin{enumerate}
\item[(i)] $ \left\lbrace \nu_{i}f_{i,j}\right\rbrace _{i\in \mathbb{I}, j\in \mathbb{J}_{i}} $ and $ \left\lbrace \mu_{i}g_{i,j}\right\rbrace  _{i\in \mathbb{I}, j\in \mathbb{J}_{i}} $ are woven frames in $ \mathcal{H} $.
\item[(ii)] $ \left\lbrace \nu_{i}e_{i,j}\right\rbrace _{i\in \mathbb{I}, j\in \mathbb{J}_{i}} $ and $ \left\lbrace \mu_{i}e^{\prime}_{i,j}\right\rbrace  _{i\in \mathbb{I}, j\in \mathbb{J}_{i}} $ are woven frames in $ \mathcal{H} $.
\item[(iii)] $ \left\lbrace W_{i}\right\rbrace _{i\in \mathbb{I} } $ and $ \left\lbrace V_{i}\right\rbrace _{i\in \mathbb{I} } $ are $\rm W.F.F$ in $ \mathcal{H} $ with respect to weights $ \left\lbrace \nu_{i}\right\rbrace _{i\in \mathbb{I}} $ , $ \left\lbrace \mu_{i}\right\rbrace _{i\in \mathbb{I}} $, respectively.
\end{enumerate}
\end{theorem}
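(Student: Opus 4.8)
The plan is to funnel all three statements through a single quantity, the fusion-frame functional attached to an interwoven family. For a subset $\sigma\subseteq\mathbb{I}$, write $\sigma^{c}=\mathbb{I}\setminus\sigma$ and set
\[
S_{\sigma}(f)=\sum_{i\in\sigma}\nu_{i}^{2}\Vert P_{W_{i}}(f)\Vert^{2}+\sum_{i\in\sigma^{c}}\mu_{i}^{2}\Vert P_{V_{i}}(f)\Vert^{2}.
\]
This is exactly the fusion-frame sum of the woven family $\lbrace W_{i}\rbrace_{i\in\sigma}\cup\lbrace V_{i}\rbrace_{i\in\sigma^{c}}$, so condition (iii) says precisely that there are universal constants $0<\mathcal{C}\leq\mathcal{D}$ with $\mathcal{C}\Vert f\Vert^{2}\leq S_{\sigma}(f)\leq\mathcal{D}\Vert f\Vert^{2}$ for every $f\in\mathcal{H}$ and every $\sigma$. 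I would therefore prove the theorem by showing that each of the two woven-frame quadratic forms appearing in (i) and (ii) is comparable to $S_{\sigma}(f)$, with comparison constants independent of $\sigma$.

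The decisive local computation is the following. If $\lbrace h_{i,j}\rbrace_{j\in\mathbb{J}_{i}}$ is a frame sequence spanning a closed subspace $U_{i}$ with bounds $(\mathcal{A}_{h_{i}},\mathcal{B}_{h_{i}})$, then each $h_{i,j}$ lies in $U_{i}$, so $\langle f,h_{i,j}\rangle=\langle P_{U_{i}}(f),h_{i,j}\rangle$; applying the frame inequality to $P_{U_{i}}(f)\in U_{i}$ yields
\[
\mathcal{A}_{h_{i}}\Vert P_{U_{i}}(f)\Vert^{2}\leq\sum_{j\in\mathbb{J}_{i}}\vert\langle f,h_{i,j}\rangle\vert^{2}\leq\mathcal{B}_{h_{i}}\Vert P_{U_{i}}(f)\Vert^{2}.
\]
Applied to the orthonormal bases $\lbrace e_{i,j}\rbrace$ of $W_{i}$ and $\lbrace e^{\prime}_{i,j}\rbrace$ of $V_{i}$, Parseval's identity sharpens this to the exact equalities $\sum_{j}\vert\langle f,e_{i,j}\rangle\vert^{2}=\Vert P_{W_{i}}(f)\Vert^{2}$ and $\sum_{j}\vert\langle f,e^{\prime}_{i,j}\rangle\vert^{2}=\Vert P_{V_{i}}(f)\Vert^{2}$. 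Multiplying by $\nu_{i}^{2}$, resp.\ $\mu_{i}^{2}$, and summing over the partition shows that the woven-frame sum in (ii) equals $S_{\sigma}(f)$ identically, so (ii) and (iii) are equivalent with the \emph{same} universal bounds.

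For (i) versus (iii), I multiply the local estimate for $\lbrace f_{i,j}\rbrace$ by $\nu_{i}^{2}$ and for $\lbrace g_{i,j}\rbrace$ by $\mu_{i}^{2}$ and sum over $i\in\sigma$ and $i\in\sigma^{c}$ respectively, obtaining
\[
\min(\mathcal{A}_{f},\mathcal{A}_{g})\,S_{\sigma}(f)\leq\sum_{i\in\sigma}\sum_{j}\nu_{i}^{2}\vert\langle f,f_{i,j}\rangle\vert^{2}+\sum_{i\in\sigma^{c}}\sum_{j}\mu_{i}^{2}\vert\langle f,g_{i,j}\rangle\vert^{2}\leq\max(\mathcal{B}_{f},\mathcal{B}_{g})\,S_{\sigma}(f),
\]
where the middle term is the woven-frame sum of (i). Since the infima and suprema defining $\mathcal{A}_{f},\mathcal{B}_{f},\mathcal{A}_{g},\mathcal{B}_{g}$ are taken over all of $\mathbb{I}$, these comparison constants are independent of $\sigma$, so a universal pair of frame bounds for one side transfers to a universal pair for the other. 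Chaining the two comparisons gives the equivalence of (i), (ii) and (iii). The only genuinely delicate point is precisely this uniformity: the hypotheses $0<\mathcal{A}_{f}=\inf_{i}\mathcal{A}_{f_{i}}$ and $\mathcal{B}_{f}=\sup_{i}\mathcal{B}_{f_{i}}<\infty$, together with their $g$-analogues, are exactly what keep the per-subspace constants under simultaneous control over every choice of partition; without them the local estimate would still hold pointwise but the resulting woven and fusion bounds could degenerate as $\sigma$ varies. Convergence of the inner sums is never in question, being supplied by the Bessel (upper) part of each local frame inequality.
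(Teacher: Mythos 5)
Your proposal is correct and takes essentially the same route as the paper's own proof: both arguments funnel all three conditions through the fusion-frame functional $\sum_{i\in\sigma}\nu_{i}^{2}\Vert P_{W_{i}}(f)\Vert^{2}+\sum_{i\in\sigma^{c}}\mu_{i}^{2}\Vert P_{V_{i}}(f)\Vert^{2}$, using the identity $\langle f,h_{i,j}\rangle=\langle P_{U_{i}}(f),h_{i,j}\rangle$ together with the uniform constants $\min(\mathcal{A}_{f},\mathcal{A}_{g})$ and $\max(\mathcal{B}_{f},\mathcal{B}_{g})$ to relate (i) and (iii), and Parseval's identity to identify the weaving sum of (ii) with the fusion sum of (iii). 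The only difference is organizational: the paper states the sandwich inequality once and then spells out the implications (i)$\Rightarrow$(iii), (iii)$\Rightarrow$(i), (ii)$\Leftrightarrow$(iii) separately, while you chain the two comparisons directly.
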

\begin{proof}
Since for every $ i\in\mathbb{I}$, $\left\lbrace f_{i,j}\right\rbrace _{j\in\mathbb{J}_{i}}$ and
$ \left\lbrace g_{i,j}\right\rbrace _{j\in\mathbb{J}_{i}}$ are frames for $ W_{i}$ and $ V_{i} $ with frame bounds $ \left( \mathcal{A}_{f_{i}}, \mathcal{B}_{f_{i}}\right)  $ and $ \left( \mathcal{A}_{g_{i}}, \mathcal{B}_{g_{i}}\right) $, then for $ \sigma\subset\mathbb{I} $;
\begin{eqnarray*}
&&\mathcal{A}_{f}\sum_{i\in\sigma}\nu_{i}^{2}\Vert P_{W_{i}}\left( f\right) \Vert^{2} +\mathcal{A}_{g}\sum_{i\in\sigma^{c}}\mu_{i}^{2}\Vert P_{V_{i}}\left( f\right) \Vert^{2}\\&\leq & \sum_{i\in\sigma}\mathcal{A}_{f_{i}}\nu_{i}^{2}\Vert P_{W_{i}}\left( f\right) \Vert^{2} +\sum_{i\in\sigma^{c}}\mathcal{A}_{g_{i}}\mu_{i}^{2}\Vert P_{V_{i}}\left( f\right) \Vert^{2}\\&=& \sum_{i\in\sigma}\mathcal{A}_{f_{i}}\Vert\nu_{i} P_{W_{i}}\left( f\right) \Vert^{2} +\sum_{i\in\sigma^{c}}\mathcal{A}_{g_{i}}\Vert\mu_{i} P_{V_{i}}\left( f\right) \Vert^{2}\\&\leq &\sum_{i\in\sigma}\sum_{j\in\mathbb{J}_{i} }\vert\langle\nu_{i}P_{W_{i}}\left( f\right), f_{i,j}\rangle\vert^{2}+\sum_{i\in\sigma^{c}}\sum_{j\in\mathbb{J}_{i} }\vert\langle\mu_{i}P_{V_{i}}\left( f\right), g_{i,j}\rangle\vert^{2}\\&\leq &\sum_{i\in\sigma}\mathcal{B}_{f_{i}}\Vert\nu_{i} P_{W_{i}}\left( f\right) \Vert^{2} +\sum_{i\in\sigma^{c}}\mathcal{B}_{g_{i}}\Vert\mu_{i} P_{V_{i}}\left( f\right) \Vert^{2}\\&\leq &\mathcal{B}_{f}\sum_{i\in\sigma}\Vert\nu_{i} P_{W_{i}}\left( f\right) \Vert^{2} +\mathcal{B}_{g}\sum_{i\in\sigma^{c}}\Vert\mu_{i} P_{V_{i}}\left( f\right) \Vert^{2}.
\end{eqnarray*}
${\rm ( i)}\Rightarrow {\rm (iii)} $: Let $ \left\lbrace \nu_{i}f_{i,j}\right\rbrace _{i\in \mathbb{I}, j\in \mathbb{J}_{i}} $ and $ \left\lbrace \mu_{i}g_{i,j}\right\rbrace  _{i\in \mathbb{I}, j\in \mathbb{J}_{i}} $ be woven frame for $ \mathcal{H} $, with universal frame bounds $ \mathcal{C} $ and $ \mathcal{D} $. The above calculation shows that for every $ f\in \mathcal{H} $,
\begin{eqnarray*}
&&\sum_{i\in\sigma}\nu_{i}^{2}\Vert P_{W_{i}}\left( f\right) \Vert^{2} +\sum_{i\in\sigma^{c}}\mu_{i}^{2}\Vert P_{V_{i}}\left( f\right) \Vert^{2}\\
&\leq &\dfrac{1}{\mathcal{A}}\left( \sum_{i\in\sigma}\sum_{j\in\mathbb{J}_{i} }\vert\langle P_{W_{i}}\left( f\right), \nu_{i}f_{i,j}\rangle\vert^{2}+\sum_{i\in\sigma^{c}}\sum_{j\in\mathbb{J}_{i} }\vert\langle P_{V_{i}}\left( f\right), \mu_{i}g_{i,j}\rangle\vert^{2}\right) \\
&=&\dfrac{1}{\mathcal{A}}\left( \sum_{i\in\sigma}\sum_{j\in\mathbb{J}_{i} }\vert\langle f, \nu_{i}f_{i,j}\rangle\vert^{2}+\sum_{i\in\sigma^{c}}\sum_{j\in\mathbb{J}_{i} }\vert\langle f, \mu_{i}g_{i,j}\rangle\vert^{2}\right) \\
&\leq &\dfrac{\mathcal{D}}{\mathcal{A}}\Vert f\Vert^{2},
\end{eqnarray*}
where $ \mathcal{A}=\min\left\lbrace \mathcal{A}_{f}, \mathcal{A}_{g}\right\rbrace $. For lower frame bound,
\begin{eqnarray*}
&&\sum_{i\in\sigma}\nu_{i}^{2}\Vert P_{W_{i}}\left( f\right) \Vert^{2} +\sum_{i\in\sigma^{c}}\mu_{i}^{2}\Vert P_{V_{i}}\left( f\right) \Vert^{2}\\
&\geq &\dfrac{1}{\mathcal{B}}\left( \sum_{i\in\sigma}\sum_{j\in\mathbb{J}_{i} }\vert\langle P_{W_{i}}\left( f\right), \nu_{i}f_{i,j}\rangle\vert^{2}+\sum_{i\in\sigma^{c}}\sum_{j\in\mathbb{J}_{i} }\vert\langle P_{V_{i}}\left( f\right), \mu_{i}g_{i,j}\rangle\vert^{2}\right) \\
&=&\dfrac{1}{\mathcal{B}}\left( \sum_{i\in\sigma}\sum_{j\in\mathbb{J}_{i} }\vert\langle f, \nu_{i}f_{i,j}\rangle\vert^{2}+\sum_{i\in\sigma^{c}}\sum_{j\in\mathbb{J}_{i} }\vert\langle f, \mu_{i}g_{i,j}\rangle\vert^{2}\right) \\
&\geq &\dfrac{\mathcal{C}}{\mathcal{B}}\Vert f\Vert^{2},
\end{eqnarray*}
for every $ f\in\mathcal{H} $, $ \mathcal{B}=\max\left\lbrace \mathcal{B}_{f}, \mathcal{B}_{g}\right\rbrace $. This calculations consequences 
$\rm (iii)$.

$\rm (iii)\Rightarrow\rm (i) $: Let $ \left\lbrace W_{i}\right\rbrace _{i\in \mathbb{I} } $ and
$ \left\lbrace V_{i}\right\rbrace _{i\in \mathbb{I} } $ be W.F.F with universal frame bounds $ \mathcal{C} $ and
$ \mathcal{D} $. Then for every $ f\in\mathcal{H} $, we have
\begin{eqnarray*}
&&\sum_{i\in\sigma}\sum_{j\in\mathbb{J}_{i} }\vert\langle f, \nu_{i}f_{i,j}\rangle\vert^{2}+\sum_{i\in\sigma^{c}}\sum_{j\in\mathbb{J}_{i} }\vert\langle f, \mu_{i}g_{i,j}\rangle\vert^{2}\\
&=& \sum_{i\in\sigma}\sum_{j\in\mathbb{J}_{i} }\vert\langle\nu_{i} P_{W_{i}}\left( f\right), f_{i,j}\rangle\vert^{2}+\sum_{i\in\sigma^{c}}\sum_{j\in\mathbb{J}_{i} }\vert\langle\mu_{i} P_{V_{i}}\left( f\right), g_{i,j}\rangle\vert^{2}\\
&\geq & \sum_{i\in\sigma}\mathcal{A}_{f_{i}}\nu_{i}^{2}\Vert P_{W_{i}}\left( f\right) \Vert^{2} +\sum_{i\in\sigma^{c}}\mathcal{A}_{g_{i}}\mu_{i}^{2}\Vert P_{V_{i}}\left( f\right) \Vert^{2}\\
&\geq &\mathcal{A}\left( \sum_{i\in\sigma}\nu_{i}^{2}\Vert P_{W_{i}}\left( f\right) \Vert^{2} +\sum_{i\in\sigma^{c}}\mu_{i}^{2}\Vert P_{V_{i}}\left( f\right) \Vert^{2}\right) \\
&\geq &\mathcal{A}\mathcal{C}\Vert f\Vert^{2},
\end{eqnarray*}
and similarly
$$ \sum_{i\in\sigma}\sum_{j\in\mathbb{J}_{i} }\vert\langle f, \nu_{i}f_{i,j}\rangle\vert^{2}+\sum_{i\in\sigma^{c}}\sum_{j\in\mathbb{J}_{i} }\vert\langle f, \mu_{i}g_{i,j}\rangle\vert^{2}\leq\mathcal{B}\mathcal{D}\Vert f\Vert^{2} . $$
So $\rm (i)$ holds.

$\rm (ii)\Leftrightarrow\rm (iii) $: Since $ \left\lbrace e_{i,j}\right\rbrace _{j\in\mathbb{J}_{i} } $ and $ \left\lbrace e^{\prime}_{i,j}\right\rbrace _{j\in\mathbb{J}_{i} } $ are orthonormal bases for subspaces $ W_{i} $ and $ V_{i} $, respectively, then for any $ f\in\mathcal{H} $, we have:
\begin{eqnarray*}
&& \sum_{i\in\sigma}\nu_{i}^{2}\Vert P_{W_{i}}\left( f\right) \Vert^{2} +\sum_{i\in\sigma^{c}}\mu_{i}^{2}\Vert P_{V_{i}}\left( f\right) \Vert^{2}\\
&=&\sum_{i\in\sigma}\nu_{i}^{2}\Vert \sum_{j\in\mathbb{J}}\langle f, e_{i,j}\rangle e_{i,j} \Vert^{2} +\sum_{i\in\sigma^{c}}\mu_{i}^{2}\Vert \sum_{j\in\mathbb{J}}\langle f, e^{\prime}_{i,j}\rangle e^{\prime}_{i,j} \Vert^{2}\\
&=& \sum_{i\in\sigma}\nu_{i}^{2} \sum_{j\in\mathbb{J}}\vert\langle f, e_{i,j}\rangle\vert^{2} +\sum_{i\in\sigma^{c}}\mu_{i}^{2} \sum_{j\in\mathbb{J}}\vert\langle f, e^{\prime}_{i,j} \rangle\vert^{2}\\
&=& \sum_{i\in\sigma} \sum_{j\in\mathbb{J}}\vert\langle f, \nu_{i}e_{i,j}\rangle\vert^{2} +\sum_{i\in\sigma^{c}} \sum_{j\in\mathbb{J}}\vert\langle f, \mu_{i} e^{\prime}_{i,j} \rangle \vert^{2}.
\end{eqnarray*}
So $\rm (ii)$ is equivalent with $\rm (iii)$.
\end{proof}
Combining of Theorem \ref{T3} and Theorem \ref{T1}, we get the following result.
\begin{theorem}
Assume that $ \left\lbrace W_{i}\right\rbrace _{i\in\mathbb{I} } $ and $ \left\lbrace V_{i}\right\rbrace _{i\in\mathbb{I} } $
are fusion frames with weights
$ \left\lbrace \mu_{i}\right\rbrace _{i\in\mathbb{I} } $ and $ \left\lbrace \nu_{i}\right\rbrace _{i\in\mathbb{I} } $
respectively. Also, if $ \left\lbrace W_{i}\right\rbrace _{i\in\mathbb{I} } $ and $ \left\lbrace V_{i}\right\rbrace _{i\in\mathbb{I} } $
are $\rm W.F.F$ and $ E $ is a self-adjoint and invertible operator on $ \mathcal{H} $, such that
$ E^{*}E(W)\subset W $, for every closed subspace $ W $ of $ \mathcal{H} $. Then for every $ \sigma\subset\mathbb{I} $, the sequence
$ \left\lbrace EW_{i}\right\rbrace _{i\in\sigma}\bigcup \left\lbrace EV_{i}\right\rbrace _{i\in\sigma^{c}} $ is a fusion frame with frame operator $ ES_{\sigma}E^{-1} $ where $ S_{\sigma} $ is frame operator of
$ \left\lbrace EW_{i}\right\rbrace _{i\in\sigma}\bigcup \left\lbrace EV_{i}\right\rbrace _{i\in\sigma^{c}} $, i.e.
$ \left\lbrace EW_{i}\right\rbrace _{i\in\mathbb{I} } $ and $ \left\lbrace EV_{i}\right\rbrace _{i\in\mathbb{I} } $ are $\rm W.F.F$.
\end{theorem}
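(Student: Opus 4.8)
The plan is to reduce the whole statement to a single projection identity, namely that under the hypotheses on $E$ one has $P_{EW}=EP_WE^{-1}$ for every closed subspace $W$, and then to read off both the asserted form of the frame operator and the universal bounds. I would organize the argument as: (1) the projection identity, (2) the computation of the weaving frame operator, (3) the uniform frame inequality. (I first note that the statement's phrase ``$S_\sigma$ is the frame operator of $\{EW_i\}_{i\in\sigma}\cup\{EV_i\}_{i\in\sigma^c}$'' is a slip and should read: $S_\sigma$ is the frame operator of the \emph{original} weaving $\{W_i\}_{i\in\sigma}\cup\{V_i\}_{i\in\sigma^c}$.)

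For the projection identity, since $E$ is self-adjoint, $E^{*}E=E^{2}$ is self-adjoint, and the hypothesis $E^{*}E(W)\subset W$ reads $E^{2}(W)\subset W$. Invariance of $W$ under a self-adjoint operator forces invariance of $W^{\perp}$: if $x\in W^{\perp}$ and $y\in W$ then $\langle E^{2}x,y\rangle=\langle x,E^{2}y\rangle=0$, so $E^{2}(W^{\perp})\subset W^{\perp}$, whence $E^{2}$ commutes with $P_{W}$. From $E^{2}P_{W}=P_{W}E^{2}$ I would multiply by $E^{-1}$ on both sides to obtain $EP_{W}E^{-1}=E^{-1}P_{W}E$, which is precisely the assertion that $Q:=EP_{W}E^{-1}$ is self-adjoint. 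Since also $Q^{2}=EP_{W}^{2}E^{-1}=Q$ and $Q(\mathcal{H})=E(W)=EW$, a closed subspace (as the bounded invertible $E$ is a homeomorphism), $Q$ is the orthogonal projection onto $EW$; that is, $P_{EW}=EP_{W}E^{-1}$, and likewise for each $W_i$ and $V_i$.

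Writing $S_\sigma=\sum_{i\in\sigma}\mu_i^{2}P_{W_i}+\sum_{i\in\sigma^{c}}\nu_i^{2}P_{V_i}$ for the weaving frame operator of the original W.F.F and $\tilde S_\sigma$ for that of the transformed family, Step~1 gives at once
$$\tilde S_\sigma=\sum_{i\in\sigma}\mu_i^{2}P_{EW_i}+\sum_{i\in\sigma^{c}}\nu_i^{2}P_{EV_i}=E\Big(\sum_{i\in\sigma}\mu_i^{2}P_{W_i}+\sum_{i\in\sigma^{c}}\nu_i^{2}P_{V_i}\Big)E^{-1}=ES_\sigma E^{-1},$$
which is the claimed frame operator. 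For the universal bounds I would argue directly: setting $g=E^{-1}f$ and using $P_{EW_i}=EP_{W_i}E^{-1}$, each term $\|P_{EW_i}f\|^{2}=\|EP_{W_i}g\|^{2}$ lies between $\|E^{-1}\|^{-2}\|P_{W_i}g\|^{2}$ and $\|E\|^{2}\|P_{W_i}g\|^{2}$; summing yields $\|E^{-1}\|^{-2}\langle S_\sigma g,g\rangle\le\langle\tilde S_\sigma f,f\rangle\le\|E\|^{2}\langle S_\sigma g,g\rangle$. Inserting the universal bounds $\mathcal C,\mathcal D$ of the original W.F.F together with $\|E\|^{-1}\|f\|\le\|g\|\le\|E^{-1}\|\,\|f\|$ produces the $\sigma$-independent bounds $\mathcal C\|E\|^{-2}\|E^{-1}\|^{-2}$ and $\mathcal D\|E\|^{2}\|E^{-1}\|^{2}$, so $\{EW_i\}$ and $\{EV_i\}$ are W.F.F. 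Alternatively, as the phrase ``combining Theorem~\ref{T3} and Theorem~\ref{T1}'' suggests, one chooses orthonormal bases $\{e_{i,j}\}$, $\{e'_{i,j}\}$ of $W_i$, $V_i$, uses Theorem~\ref{T1}\,(ii)$\Leftrightarrow$(iii) to pass to the woven frames $\{\mu_i e_{i,j}\}$, $\{\nu_i e'_{i,j}\}$, applies Theorem~\ref{T3} to get the woven frames $\{\mu_i Ee_{i,j}\}$, $\{\nu_i Ee'_{i,j}\}$, and feeds these back through Theorem~\ref{T1}.

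The genuinely delicate point is the identity $P_{EW}=EP_{W}E^{-1}$: it is simply false for a general invertible $E$, and self-adjointness together with $E^{*}E(W)\subset W$ is exactly what makes it hold. A secondary subtlety arises in the Theorem~\ref{T1} route, where $\{Ee_{i,j}\}$ need not be orthonormal, so the orthonormal equivalence (ii)$\Leftrightarrow$(iii) cannot be used in reverse; one must instead verify that $\{Ee_{i,j}\}_j$ is a frame sequence for $EW_i$ with bounds uniform in $i$ and invoke (i)$\Leftrightarrow$(iii). Here the invariance hypothesis re-enters: for $h=Eg\in EW_i$ one computes $\sum_j|\langle h,Ee_{i,j}\rangle|^{2}=\sum_j|\langle E^{2}g,e_{i,j}\rangle|^{2}=\|E^{2}g\|^{2}=\|Eh\|^{2}$, using $E^{2}g\in W_i$, and this lies between $\|E^{-1}\|^{-2}\|h\|^{2}$ and $\|E\|^{2}\|h\|^{2}$, giving the required uniform frame bounds.
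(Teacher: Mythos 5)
Your proposal is correct, and it takes a genuinely different route from the paper's own proof. The paper never isolates the projection identity: instead it fixes local discrete frames $F_i=\{f_{i,j}\}_{j}$ for $W_i$ and $G_i=\{g_{i,j}\}_{j}$ for $V_i$, shows that $\{Ef_{i,j}\}_j$ is a frame for $EW_i$ with frame operator $ES_{F_i}E$, passes to the (transformed) dual frames, expands each $P_{EW_i}(f)$ through the dual-frame reconstruction formula to arrive at $ES_\sigma E^{-1}$, and then obtains the W.F.F property by shuttling through Theorem \ref{T1} (fusion $\leftrightarrow$ discrete woven) and Theorem \ref{T3} (invariance of woven frames under bounded invertible operators) --- exactly the ``alternative route'' you sketch in your second paragraph. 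Your argument replaces all of this with the single lemma $P_{EW}=EP_WE^{-1}$, whose proof (invariance of $W^{\perp}$ under the self-adjoint $E^{2}$, hence $EP_WE^{-1}$ self-adjoint, idempotent, with range $EW$) is complete and correct; the frame-operator identity and the explicit $\sigma$-independent bounds $\mathcal{C}\Vert E\Vert^{-2}\Vert E^{-1}\Vert^{-2}$ and $\mathcal{D}\Vert E\Vert^{2}\Vert E^{-1}\Vert^{2}$ then fall out in two lines. What your route buys is significant: it is self-contained, it produces explicit universal bounds (the paper states none), and it sidesteps two defects in the paper's computation --- the dual-frame formula there, $\left( ES_{F_i}E\right)^{-1}Ef_{i,j}=E^{-1}S_{F_i}f_{i,j}$, drops an inverse (it should be $E^{-1}S_{F_i}^{-1}f_{i,j}$), and the final display silently uses $\Vert P_{EW_i}(f)\Vert^{2}=\sum_{j}\vert\langle P_{EW_i}(f),Ef_{i,j}\rangle\vert^{2}$, which holds only if $\{Ef_{i,j}\}_j$ were Parseval for $EW_i$. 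Your closing remark about $\{Ee_{i,j}\}_j$ failing to be orthonormal, together with your uniform frame-sequence estimate $\Vert E^{-1}\Vert^{-2}\Vert h\Vert^{2}\leq\sum_{j}\vert\langle h,Ee_{i,j}\rangle\vert^{2}\leq\Vert E\Vert^{2}\Vert h\Vert^{2}$, is precisely the patch the paper's route needs, so you have in effect both given a cleaner proof and repaired the published one. Your reading of the slip concerning $S_\sigma$ (it must be the frame operator of the \emph{original} weaving $\{W_i\}_{i\in\sigma}\cup\{V_i\}_{i\in\sigma^{c}}$, or the claim $\tilde S_\sigma=ES_\sigma E^{-1}$ is vacuous) also matches what the paper's own computation actually proves.
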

\begin{proof}
Let $ F_{i}=\left\lbrace f_{i,j}\right\rbrace _{j\in\mathbb{J}_{i} } $ and $ G_{i}=\left\lbrace g_{i,j}\right\rbrace _{j\in\mathbb{J}_{i} } $
be frames for $ W_{i} $ and $ V_{i} $ with frame operator $ S_{F_{i}} $ and $ S_{G_{i}} $, respectively. Therefore
$ \left\lbrace Ef_{i,j}\right\rbrace _{j\in\mathbb{J}_{i} } $ is frame for $ EW_{i} $, with frame operator $ ES_{F_{i}}E $:
\begin{eqnarray*}
\sum_{j\in\mathbb{J}_{i}}\left\langle f, Ef_{i,j}\right\rangle Ef_{i,j}&=&
E\left( \sum_{j\in\mathbb{J}_{i}}\left\langle E^{*}\vert_{W_{i}}f, f_{i,j}\right\rangle f_{i,j}\right) \\
&=& E\left( \sum_{j\in\mathbb{J}_{i}}\left\langle Ef, f_{i,j}\right\rangle f_{i,j}\right) \\
&=&ES_{F_{i}}Ef,\quad \forall f\in W_{i}.
\end{eqnarray*}
The standard dual frame of $ \left\lbrace Ef_{i,j}\right\rbrace _{j\in\mathbb{J}_{i} } $ is
$$ \left\lbrace \left( ES_{F_{i}}E\right) ^{-1}Ef_{i,j}\right\rbrace _{j\in\mathbb{J}_{i} }=
\left\lbrace E^{-1}S_{F_{i}}f_{i,j}\right\rbrace_{j\in\mathbb{J}_{i} } . $$
Also $ \left\lbrace Eg_{i,j}\right\rbrace _{j\in\mathbb{J}_{i} } $ is frame for $ EV_{i} $ with frame operator $ ES_{G_{i}}E $ and
standard dual frame $ \left\lbrace E^{-1}S_{G_{i}}f_{i,j}\right\rbrace_{j\in\mathbb{J}_{i} } $. Thus, for
$ \sigma\subset\mathbb{I} $ and by definition of fusion frame operator, for any $ f\in\mathcal{H} $, we have
\begin{eqnarray*}
& & \sum_{i\in\sigma}\nu_{i}^{2}P_{EW_{i}}(f)+\sum_{i\in\sigma^{c}}\mu_{i}^{2}P_{EV_{i}}(f)\\
&=&\sum_{i\in\sigma}\nu_{i}^{2}\left( \sum_{j\in\mathbb{J}_{i} }\left\langle f,E^{-1}S_{F_{i}}f_{i,j}\right\rangle Ef_{i,j}\right) +
\sum_{i\in\sigma^{c}}\mu_{i}^{2}\left( \sum_{j\in\mathbb{J}_{i} }\left\langle f,E^{-1}S_{G_{i}}g_{i,j}\right\rangle Tg_{i,j}\right)\\
&=& E\left(\sum_{i\in\sigma}\nu_{i}^{2}\sum_{j\in\mathbb{J}_{i} }\left\langle S_{F_{i}}E^{-1}f,f_{i,j}\right\rangle f_{i,j}\right)+
E\left(\sum_{i\in\sigma^{c}}\mu_{i}^{2}\sum_{j\in\mathbb{J}_{i} }\left\langle S_{G_{i}}E^{-1}f,g_{i,j}\right\rangle g_{i,j}\right)\\
&=& ES_{\sigma}\left( \sum_{i\in\sigma,j\in\mathbb{J}_{i}}\left\langle E^{-1}f,\nu_{i}f_{i,j}\right\rangle \nu_{i}f_{i,j}+
\sum_{i\in\sigma^{c},j\in\mathbb{J}_{i}}\left\langle E^{-1}f,\mu_{i}g_{i,j}\right\rangle \mu_{i}g_{i,j}\right) \\
&=& ES_{\sigma}E^{-1}f,
\end{eqnarray*}
therefore:
$$  S_{\sigma}f=\sum_{i\in\sigma,j\in\mathbb{J}_{i}}\left\langle f,\nu_{i}f_{i,j}\right\rangle \nu_{i}f_{i,j}+ \sum_{i\in\sigma^{c},j\in\mathbb{J}_{i}}\left\langle f,\mu_{i}g_{i,j}\right\rangle \mu_{i}g_{i,j}. $$
Since $ \left\lbrace W_{i}\right\rbrace _{i\in\mathbb{I} } $ and
$ \left\lbrace V_{i}\right\rbrace _{i\in\mathbb{I} } $
are W.F.F, then by Theorem \ref{T1},
$ \left\lbrace f_{i,j}\right\rbrace _{j\in\mathbb{J}_{i} } $ and
$ \left\lbrace g_{i,j}\right\rbrace _{j\in\mathbb{J}_{i} } $ are woven frames. By Theorem \ref{T3},
$ \left\lbrace Ef_{i,j}\right\rbrace _{j\in\mathbb{J}_{i} } $ and $ \left\lbrace Eg_{i,j}\right\rbrace _{j\in\mathbb{J}_{i} } $ are also woven frames. Thus for every $ f\in\mathcal{H} $ and for arbitrary $ \sigma_{i}\subset\mathbb{I} $, we have:
\begin{eqnarray*}
&&\sum_{i\in\sigma_{i}}\nu_{i}^{2}\left\Vert P_{EW_{i}}(f)\right\Vert^{2}+
\sum_{i\in\sigma_{i}^{c}}\mu_{i}^{2}\left\Vert P_{EV_{i}}(f)\right\Vert^{2}\\
&=& \sum_{i\in\sigma_{i}}\nu_{i}^{2}\sum_{j\in\mathbb{J}_{i} }
\left\vert \left\langle P_{EW_{i}}(f),Ef_{i,j}\right\rangle \right\vert^{2}+
\sum_{i\in\sigma_{i}^{c}}\mu_{i}^{2}\sum_{j\in\mathbb{J}_{i} }
\left\vert \left\langle P_{EV_{i}}(f),Eg_{i,j}\right\rangle \right\vert^{2}\\
&=&\sum_{j\in\mathbb{J}_{i},i\in\sigma_{i} }\left\vert \left\langle P_{EW_{i}}(f),\nu_{i}Ef_{i,j}\right\rangle \right\vert^{2}+
\sum_{j\in\mathbb{J}_{i},i\in\sigma_{i}^{c} }\left\vert \left\langle P_{EV_{i}}(f),\mu_{i}Eg_{i,j}\right\rangle \right\vert^{2}.
\end{eqnarray*}
Therefore $ \left\lbrace EW_{i}\right\rbrace _{i\in\sigma_{i}} \bigcup\left\lbrace EV_{i}\right\rbrace _{i\in\sigma^{c}_{i}} $ is a fusion frame and $ \left\lbrace EW_{i}\right\rbrace _{i\in\mathbb{I}} $ and $ \left\lbrace EV_{i}\right\rbrace _{i\in\mathbb{I}} $ are W.F.F .
\end{proof}

In the following theorem, we show that the intersection of components of a woven fusion frames with the other subspace, is a woven fusion frames (W.F.F) for smaller space.
\begin{theorem}
Let $ K $ be a closed subspace of $ \mathcal{H} $ and let $ \left\lbrace W_{i}\right\rbrace _{i\in \mathbb{I} } $ and $ \left\lbrace V_{i}\right\rbrace _{i\in \mathbb{I} } $ constitute $\rm W.F.F$ with respect to weights $ \left\lbrace \nu_{i}\right\rbrace _{i\in \mathbb{I} } $ and $ \left\lbrace \mu_{i}\right\rbrace _{i\in \mathbb{I} } $ for $ \mathcal{H} $ with woven bounds $ \mathcal{A} $ and $ \mathcal{B} $. Then $ \left\lbrace W_{i}\bigcap K\right\rbrace _{i\in \mathbb{I} } $ and $ \left\lbrace V_{i}\bigcap K\right\rbrace _{i\in \mathbb{I} } $ constitute $\rm W.F.F $ for $ K $ with respect to weights $ \left\lbrace \nu_{i}\right\rbrace _{i\in \mathbb{I} } $ and
$ \left\lbrace \mu_{i}\right\rbrace _{i\in \mathbb{I} } $ with universal woven bounds $ \mathcal{A} $ and $ \mathcal{B} $.
\end{theorem}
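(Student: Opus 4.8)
The plan is to verify the two defining fusion-frame inequalities directly on $K$, partition by partition. Fix an arbitrary $\sigma\subset\mathbb{I}$. By hypothesis $\{W_i\}_{i\in\mathbb{I}}$ and $\{V_i\}_{i\in\mathbb{I}}$ form a $\rm W.F.F.$ for $\mathcal{H}$, so the mixed family $\{W_i\}_{i\in\sigma}\cup\{V_i\}_{i\in\sigma^c}$ is a fusion frame for $\mathcal{H}$ with the common bounds $\mathcal{A},\mathcal{B}$; hence for every $f\in\mathcal{H}$, and in particular for every $f\in K$,
\[
\mathcal{A}\|f\|^2 \le \sum_{i\in\sigma}\nu_i^2\|P_{W_i}f\|^2 + \sum_{i\in\sigma^c}\mu_i^2\|P_{V_i}f\|^2 \le \mathcal{B}\|f\|^2.
\]
It therefore suffices to compare, for $f\in K$, the projections onto $W_i\cap K$ and $V_i\cap K$ with those onto $W_i$ and $V_i$, and to check that the estimate survives with the same constants.

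For the upper (Bessel) bound I would first record the elementary identity $P_{W_i\cap K}=P_{W_i\cap K}P_{W_i}$, which holds because $W_i\cap K\subseteq W_i$. Since $P_{W_i\cap K}$ is a contraction, this gives $\|P_{W_i\cap K}f\|\le\|P_{W_i}f\|$ for all $f$, and likewise $\|P_{V_i\cap K}f\|\le\|P_{V_i}f\|$. Summing against the weights and invoking the right-hand inequality above yields
\[
\sum_{i\in\sigma}\nu_i^2\|P_{W_i\cap K}f\|^2 + \sum_{i\in\sigma^c}\mu_i^2\|P_{V_i\cap K}f\|^2 \le \mathcal{B}\|f\|^2,
\]
so the upper woven bound $\mathcal{B}$ is inherited with no loss, uniformly in $\sigma$.

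The lower bound is the crux, and is where I expect the real obstacle to lie. The natural route is to prove that for $f\in K$ one in fact has equality $P_{W_i\cap K}f=P_{W_i}f$ (and similarly for $V_i$), so that $\|P_{W_i\cap K}f\|=\|P_{W_i}f\|$; feeding this into the left-hand inequality above then produces $\mathcal{A}\|f\|^2$ exactly, completing the argument for every partition and showing that the intersected families constitute a $\rm W.F.F.$ for $K$ with the same bounds $\mathcal{A},\mathcal{B}$. The delicate point is precisely this identity: it amounts to the assertion that $P_{W_i}f\in K$ whenever $f\in K$, i.e.\ that each $P_{W_i}$ (and each $P_{V_i}$) leaves $K$ invariant, equivalently that $P_{W_i}$ commutes with $P_K$. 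I would therefore isolate this invariance as a lemma, verify it under the standing hypotheses on $K$ and the $W_i,V_i$, and only then combine it with the lower $\rm W.F.F.$ inequality. Establishing this compatibility between $K$ and the woven components is the step that carries all the weight; once it is in hand, the two frame inequalities follow routinely from the displays above.
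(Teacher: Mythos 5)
Your upper-bound argument is complete and correct: since $W_i\cap K\subseteq W_i$ one has $P_{W_i\cap K}=P_{W_i\cap K}P_{W_i}$, hence $\Vert P_{W_i\cap K}f\Vert\leq\Vert P_{W_i}f\Vert$, and the universal bound $\mathcal{B}$ follows uniformly in $\sigma$. The lower bound, however, is left genuinely open, and you say so yourself: everything hinges on the lemma that $P_{W_i}f\in K$ whenever $f\in K$, i.e.\ that each $P_{W_i}$ (and each $P_{V_i}$) commutes with $P_K$, and you never prove it. That lemma cannot be proved, because it does not follow from the hypotheses: the theorem assumes no compatibility whatsoever between $K$ and the subspaces $W_i, V_i$. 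In fact the statement as given is false. Take $\mathcal{H}=\mathbb{R}^{2}$ with orthonormal basis $\lbrace e_{1}, e_{2}\rbrace$, let $W_{1}=V_{1}=\mathrm{span}\lbrace e_{1}\rbrace$, $W_{2}=V_{2}=\mathrm{span}\lbrace e_{2}\rbrace$ with all weights equal to $1$; every weaving of these two families is the same Parseval fusion frame, so they are $\mathrm{W.F.F}$ with $\mathcal{A}=\mathcal{B}=1$. But for $K=\mathrm{span}\lbrace e_{1}+e_{2}\rbrace$ every intersection $W_{i}\cap K$ and $V_{i}\cap K$ is $\lbrace 0\rbrace$, so all the projections $P_{W_i\cap K}$, $P_{V_i\cap K}$ vanish identically and no lower bound can hold on $K$.

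For what it is worth, the paper's own proof suffers from exactly the same defect, only disguised: it opens by declaring $P_{W_{i}\cap K}=P_{W_{i}}P_{K}$, an identity which holds precisely when $P_{W_{i}}$ and $P_{K}$ commute (otherwise $P_{W_{i}}P_{K}$ is not even an orthogonal projection, since it fails to be self-adjoint), and this commutation is the very invariance you flagged. So your diagnosis locates the true obstruction rather than resolving it: the theorem becomes correct only after adding the hypothesis that $P_{K}$ commutes with every $P_{W_{i}}$ and every $P_{V_{i}}$ (equivalently, that $K$ is invariant under all of these projections), and under that additional hypothesis your outline closes immediately, since then $P_{W_i\cap K}f=P_{W_i}P_K f=P_{W_i}f$ for $f\in K$ and the lower bound $\mathcal{A}$ is inherited verbatim.
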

\begin{proof}
Let the operators $ P_{W_{i}\cap K}= P_{W_{i}}\left( P_{K }\right) $ and $ P_{V_{i}\cap K}= P_{V_{i}}\left( P_{K}\right) $ be orthogonal projections of $ \mathcal{H} $ onto $ W_{i}\bigcap K $ and $ V_{i}\bigcap K $, respectively. Then for every $ f\in K $, we can write:
\begin{eqnarray*}
& & \sum_{i\in\sigma}\nu_{i}^{2}\Vert P_{W_{i}}\left( f\right) \Vert^{2} +\sum_{i\in\sigma^{c}}\mu_{i}^{2}\Vert P_{V_{i}}\left( f\right) \Vert^{2}\\
&= & \sum_{i\in\sigma}\nu_{i}^{2}\Vert P_{W_{i}}\left( P_{K}\left( f\right) \right)  \Vert^{2} +\sum_{i\in\sigma^{c}}\mu_{i}^{2}\Vert P_{V_{i}}\left( P_{K}\left( f\right)\right) \Vert^{2}\\
&= & \sum_{i\in\sigma}\nu_{i}^{2}\Vert P_{W_{i}\cap K }\left( f\right) \Vert^{2} +\sum_{i\in\sigma^{c}}\mu_{i}^{2}\Vert P_{V_{i}\cap K}\left( f\right) \Vert^{2},
\end{eqnarray*}
which implies the result.
\end{proof}

The next proposition shows that every weaving of fusion Bessels, automatically has upper Bessel bound.
\begin{proposition}
\label{P1}
Let $ \left\lbrace W_{ij}\right\rbrace _{i\in\mathbb{I} } $ be a fusion Bessel sequence of subspaces for $ \mathcal{H} $ with bounds $ \mathcal{B}_{j} $ for all $ j\in \left[ m\right]  $. Then every weaving of this sequence is a Bessel sequence.
\end{proposition}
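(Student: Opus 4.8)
The plan is to exploit two elementary facts: the fusion-Bessel sum is monotone under restriction of the index set (all summands being nonnegative), and the outer index set $[m]$ is finite, so that a sum of finitely many individual Bessel bounds is again finite. No frame lower bound is involved, so the argument is a pure upper-estimate.

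First I would fix an arbitrary partition $\left\lbrace\sigma_{j}\right\rbrace_{j\in[m]}$ of $\mathbb{I}$ and an arbitrary $f\in\mathcal{H}$, and write down the quantity controlling the weaving, namely
\[
\sum_{j\in[m]}\sum_{i\in\sigma_{j}}\nu_{ij}^{2}\Vert P_{W_{ij}}(f)\Vert^{2},
\]
where $\left\lbrace\nu_{ij}\right\rbrace$ are the weights relative to which each family $\left\lbrace W_{ij}\right\rbrace_{i\in\mathbb{I}}$ is a fusion Bessel sequence. Since every term is nonnegative and $\sigma_{j}\subseteq\mathbb{I}$, for each fixed $j$ I would enlarge the inner index set back to all of $\mathbb{I}$ and then invoke the Bessel hypothesis with bound $\mathcal{B}_{j}$:
\[
\sum_{i\in\sigma_{j}}\nu_{ij}^{2}\Vert P_{W_{ij}}(f)\Vert^{2}
\leq\sum_{i\in\mathbb{I}}\nu_{ij}^{2}\Vert P_{W_{ij}}(f)\Vert^{2}
\leq\mathcal{B}_{j}\Vert f\Vert^{2}.
\]

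Summing this estimate over the finitely many $j\in[m]$ yields the universal upper bound
\[
\sum_{j\in[m]}\sum_{i\in\sigma_{j}}\nu_{ij}^{2}\Vert P_{W_{ij}}(f)\Vert^{2}
\leq\left(\sum_{j\in[m]}\mathcal{B}_{j}\right)\Vert f\Vert^{2},
\]
and since $\sum_{j\in[m]}\mathcal{B}_{j}\leq m\max_{j\in[m]}\mathcal{B}_{j}<\infty$ is a finite constant independent of both the chosen partition and of $f$, the weaving is a fusion Bessel sequence with Bessel bound $\sum_{j\in[m]}\mathcal{B}_{j}$. As the partition was arbitrary, every weaving of $\left\lbrace W_{ij}\right\rbrace$ is a Bessel sequence, which is the assertion.

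There is no genuine obstacle here; the only point worth flagging is the role of the finiteness of $m$, which is what turns the crude termwise estimate into a finite universal bound. Were $[m]$ replaced by an infinite index set, the naive sum of the $\mathcal{B}_{j}$ could diverge, and one would need a uniform control on the $\mathcal{B}_{j}$ rather than the trivial summation used above.
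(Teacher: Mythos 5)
Your proof is correct and follows essentially the same route as the paper: enlarge each partial sum over $\sigma_{j}$ to the full index set $\mathbb{I}$, apply the $j$-th Bessel bound, and sum the finitely many estimates to obtain the universal bound $\sum_{j\in[m]}\mathcal{B}_{j}$. Your closing remark on the essential role of the finiteness of $m$ is a useful observation that the paper leaves implicit.
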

\begin{proof}
For every partition $ \left\lbrace\sigma_{j} \right\rbrace _{j\in\left[ m\right] } $, such that $ \sigma_{j}\subset\mathbb{I} $ for $ j\in\left[ m\right] $ and for $ f\in\mathcal{H} $, we have
$$ \sum_{j=1}^{m}\sum_{i\in\sigma_{j}}\nu^{2}\Vert P_{W_{ij}}\left(f \right) \Vert^{2} \leq \sum_{j=1}^{m}\sum_{i=1}^{\infty}\nu^{2}\Vert P_{W_{ij}}\left(f \right) \Vert^{2}\leq\sum_{j=1}^{m}\mathcal{B}_{j}\Vert f\Vert^{2}. $$
\end{proof}
\section*{application}
Suppose $ \left\lbrace e_{k}\right\rbrace _{k=1}^{\infty }  $ be an orthonormal basis of $ \mathcal{H} $ and $ \mathcal{H}=\ell^{2}\left( \mathbb{N}\right)  $. In the next examples the indexing set $ \mathbb{I}=\mathbb{N} $ is the natural numbers set. For every $ i\in\mathbb{N} $, $ H_{i}=\overline{{\rm span}}\left\lbrace e_{k}\right\rbrace _{k=i}^{\infty } $ and $ \left\lbrace e_{ij}\right\rbrace _{j=1}^{\infty}= \left\lbrace e_{i+j-1}\right\rbrace _{j=1}^{\infty} $ is an orthonormal basis for $ H_{i} $.
\\
\begin{example}
\label{E1}
Let $ \left\lbrace P_{i}\right\rbrace _{i=1}^{\infty }  $
 and $ \left\lbrace P_{i}^{\prime}\right\rbrace _{i=1}^{\infty }  $ be the family of orthogonal projections
$ P_{i}:\mathbb{H}\longrightarrow \overline{{\rm span}}\left\lbrace e_{i}\right\rbrace  $ and $ P^{\prime}_{i}:\mathbb{H}\longrightarrow \overline{{\rm span}}\left\lbrace e_{i}, e_{i+1}\right\rbrace $ for each $ i\in\mathbb{N} $. Also let $ f_{i,j}=P_{i}(e_{i,j})$ and $ g_{i,j}=P^{\prime}_{i}(e_{i,j}) $. Then we have:
\begin{eqnarray*}
 f_{i,j}= P_{i}\left( e_{i+j-1}\right) =
\left\{\begin{array}{ll}
e_{i} \quad \quad  j=1 \\ 0 \quad\quad j > 1
\end{array}\right. \quad, \quad 
g_{i,j}= P_{i}^{\prime}\left( e_{i+j-1}\right) = \left\{\begin{array}{ll}
e_{i} \quad \quad j=1 \\
e_{i+1} \quad j=2 \\
0 \quad\quad j > 2
\end{array}\right. ,
\end{eqnarray*}
therefore
$$ \sum_{i=1}^{\infty}\sum_{j=1}^{\infty}\left\vert \left\langle f, f_{i,j}\right\rangle \right\vert^{2} =\sum_{i=1}^{\infty}\left\vert \left\langle f, f_{i,1}\right\rangle \right\vert^{2}=\sum_{i=1}^{\infty}\left\vert \left\langle f, e_{i}\right\rangle \right\vert^{2}=\Vert f\Vert^{2}. $$
So 
$ \left\lbrace f_{i,j}\right\rbrace _{i,j=1}^{\infty} $ is a tight frame with bound $ \mathcal{A}=\mathcal{B}=1 $. Also:
\begin{eqnarray*}
\sum_{i=1}^{\infty}\sum_{j=1}^{\infty}\left\vert \left\langle f, g_{i,j}\right\rangle \right\vert^{2}&=& \sum_{i=1}^{\infty}\left\vert \left\langle f, g_{i,1}\right\rangle \right\vert^{2}+\sum_{i=1}^{\infty}\left\vert \left\langle f, g_{i,2}\right\rangle \right\vert^{2}\\ &=& \sum_{i=1}^{\infty}\left\vert \left\langle f, e_{i}\right\rangle \right\vert^{2}+\sum_{i=1}^{\infty}\left\vert \left\langle f, e_{i+1}\right\rangle \right\vert^{2}\\ &=& 2\Vert f\Vert^{2} - \left\vert \left\langle f, e_{1}\right\rangle \right\vert^{2},
\end{eqnarray*}
this shows that 
$ \left\lbrace g_{i,j}\right\rbrace _{i,j=1}^{\infty} $
is a frame with bounds $ \mathcal{A}=1 $ and $ \mathcal{B}=2 $, such that these frames constitute woven frames. Because for arbitrary set $ \sigma\subset \mathbb{N} $ and for every $ f\in\mathcal{H} $, we have
\begin{eqnarray*}
\Vert f\Vert^{2}&\leq &\sum_{i\in\sigma}\sum_{j=1}^{\infty}\vert\langle f,f_{i,j}\rangle\vert^{2}+\sum_{i\in\sigma^{c}}\sum_{j=1}^{\infty}\vert\langle f, g_{i,j}\rangle\vert^{2}\\&=& \sum_{i\in\sigma}\vert \langle f, f_{i,1}\rangle\vert^{2}+ \sum_{i\in\sigma^{c}}\vert \langle f, g_{i,1} \rangle\vert^{2}+ \sum_{i\in\sigma^{c}}\vert \langle f, g_{i,2}\rangle\vert^{2}\\&=& \sum_{i\in\sigma}\vert \langle f, e_{i}\rangle\vert^{2}+ \sum_{i\in\sigma^{c}}\vert \langle f, e_{i}\rangle\vert^{2}+ \sum_{i\in\sigma^{c}}\vert \langle f, e_{i+1} \rangle\vert^{2}\\&\leq &2 \sum_{i=1}^{\infty}\vert \langle f, e_{i}\rangle\vert^{2}\\&=& \Vert f\Vert^{2}.
\end{eqnarray*}
Now, if for every $ i\in\mathbb{N} $, we assume the set $ \mathbb{J}_{i}=\mathbb{N} $, $ W_{i}=\overline{\rm span}\left\lbrace f_{i,j}\right\rbrace_{j\in\mathbb{J}_{i}} $ and $ V_{i}=\overline{\rm span}\left\lbrace g_{i,j}\right\rbrace_{j\in\mathbb{J}_{i}}$. By Theorem $\ref{T1} $ , $\left\lbrace W_{i}\right\rbrace _{i=1}^{\infty}$ and $ \left\lbrace V_{i}\right\rbrace _{i=1}^{\infty} $ constitute W.F.F with weights $ \nu_{i}=\mu_{i}=1 (\forall i\in \mathbb{N})$.
\end{example}

\begin{example}
Suppose $ \left\lbrace P_{i}\right\rbrace _{i=1}^{\infty }  $ and $ \left\lbrace P_{i}^{\prime}\right\rbrace _{i=2}^{\infty }  $ are same as in Example $ \ref{E1}$, exept $ P_{1}^{\prime } $. Then $ \left\lbrace f_{i,j}\right\rbrace _{i, j=1}^{\infty} $ and $ \left\lbrace g_{i,j}\right\rbrace _{i, j=1}^{\infty} $ don't constitute woven frames, since for $ \sigma = \mathbb{N}\setminus\left\lbrace 1\right\rbrace $, we have:
\begin{eqnarray*}
&&\sum_{i\in\sigma}\sum_{j=1}^{\infty}\vert\langle e_{1}, f_{i,j}\rangle\vert^{2}+ \sum_{i\in\sigma^{c}}\sum_{j=1}^{\infty}\vert\langle e_{1}, g_{i,j}\rangle\vert^{2}\\
&=& \sum_{i\in\sigma}\vert\langle e_{1}, P_{i}\left( e_{i,1}\right)\rangle \vert^{2}+
\sum_{i\in\sigma^{c}}\vert \langle e_{1}, P_{i}^{\prime}\left( e_{i,1}\right) \rangle\vert^{2}\\
&=& \sum_{i\in\sigma}\vert \langle e_{1}, e_{i}\rangle\vert^{2}+
 \sum_{i\in\sigma^{c}, i\neq 1}\vert \langle e_{1}, e_{i}) \rangle\vert^{2} +
 \sum_{i\in\sigma^{c}, i\neq 1}\vert \langle e_{1}, e_{i+1}\rangle \vert^{2} \\
&=&\sum_{i\in\mathbb{I}\setminus{1}}\vert\langle e_{1}, e_{i}\rangle\vert^{2}+
\vert \langle e_{1}, 0\rangle\vert^{2}\\
&=& 0 < \mathcal{A}\Vert e_{1}\Vert^{2}.
\end{eqnarray*}
This contradiction and Theorem $ \ref{T1} $ show that $\left\lbrace W_{i}\right\rbrace _{i=1}^{\infty}$ and $ \left\lbrace V_{i}\right\rbrace _{i=1}^{\infty} $ are not W.F.F .
\end{example}
Next theorem is extending Lemma 4.3 \cite{wov1}. In the following, we show that if one of the weavings does not satisfy in the lower bound condition, so the frames shall not form a W.F.F:
\begin{theorem}
\label{T2}
Suppose $ \left\lbrace W_{i}\right\rbrace _{i\in\mathbb{I} } $ and $ \left\lbrace V_{i}\right\rbrace _{i\in\mathbb{I} } $ be fusion frames for $ \mathcal{H} $ with respect to $ \left\lbrace \nu_{i}\right\rbrace _{i\in\mathbb{I} } $
and $ \left\lbrace \mu_{i}\right\rbrace _{i\in\mathbb{I} } $ and also let for every two disjoint finite sets $ I, J\subset\mathbb{I} $ and every $ \varepsilon >0 $, there exist subsets $ \sigma , \delta\subset \mathbb{I}\setminus\left( I\bigcup J\right)  $ such that the lower fusion frame bound of
$ \left\lbrace W_{i}\right\rbrace _{i\in (I\cup \sigma)}\bigcup \left\lbrace V_{i}\right\rbrace _{i\in (J\bigcup\delta)} $ is less than $ \varepsilon $. Then there exists $ \mathcal{M}\subset\mathbb{I}$ so that
$ \left\lbrace W_{i}\right\rbrace _{i\in\mathcal{M}}\bigcup\left\lbrace V_{i}\right\rbrace _{i\in\mathcal{M}^{c} } $ is not a fusion frame. Hence $ \left\lbrace W_{i}\right\rbrace _{i\in\mathbb{I} } $ and $ \left\lbrace V_{i}\right\rbrace _{i\in\mathbb{I} } $ are not ${\rm W.F.F}.$
\end{theorem}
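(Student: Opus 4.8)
The plan is to produce a single set $\mathcal{M}\subset\mathbb{I}$ together with a sequence of unit vectors whose weaving sums tend to $0$, which forces the weaving $\{W_i\}_{i\in\mathcal{M}}\cup\{V_i\}_{i\in\mathcal{M}^c}$ to have vanishing lower bound. First I would record that, since $\{W_i\}$ and $\{V_i\}$ are in particular fusion Bessel sequences, every weaving of them is automatically Bessel (this is the content of Proposition \ref{P1} with $m=2$, with a universal upper bound $\mathcal{D}$ built from the two individual Bessel bounds). Consequently a weaving fails to be a fusion frame if and only if its optimal lower bound is $0$, i.e.\ there exist unit vectors $h_n$ with $\sum_{i\in\mathcal{M}}\nu_i^2\|P_{W_i}h_n\|^2+\sum_{i\in\mathcal{M}^c}\mu_i^2\|P_{V_i}h_n\|^2\to 0$. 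I would also use that for a \emph{fixed} vector the fusion-Bessel series converges, so its tails over $\mathbb{I}\setminus G$ vanish as the finite set $G$ exhausts $\mathbb{I}$.

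The construction is an inductive ``freeze as you go'' absorption. Fix an enumeration of $\mathbb{I}$ and a summable sequence $\varepsilon_n\downarrow 0$ (say $\varepsilon_n=3^{-n}$). I build nested finite disjoint sets $I_0=J_0=\emptyset\subset I_1\subset I_2\subset\cdots$ (destined for $\mathcal{M}$) and $J_1\subset J_2\subset\cdots$ (destined for $\mathcal{M}^c$). At stage $n$, apply the hypothesis to the finite disjoint pair $(I_{n-1},J_{n-1})$ with tolerance $\varepsilon_n$ to obtain $\sigma_n,\delta_n\subset\mathbb{I}\setminus(I_{n-1}\cup J_{n-1})$ (which I read as a partition of that complement, so that $I_{n-1}\cup\sigma_n$ and $J_{n-1}\cup\delta_n$ partition $\mathbb{I}$) with lower bound below $\varepsilon_n$; pick a unit witness $f_n$ with
\[
\sum_{i\in I_{n-1}\cup\sigma_n}\nu_i^2\|P_{W_i}f_n\|^2+\sum_{i\in J_{n-1}\cup\delta_n}\mu_i^2\|P_{V_i}f_n\|^2<\varepsilon_n .
\]
Now $f_n$ is frozen: choose a finite $G_n\supseteq I_{n-1}\cup J_{n-1}$ containing the $n$-th enumerated index and large enough that both Bessel tails of $f_n$ outside $G_n$ are below $\varepsilon_n$, and commit all of $G_n$ according to the stage-$n$ assignment, i.e.\ set $I_n=I_{n-1}\cup(G_n\cap\sigma_n)$ and $J_n=J_{n-1}\cup(G_n\cap\delta_n)$. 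Finally set $\mathcal{M}=\bigcup_n I_n$; disjointness of each $\sigma_n,\delta_n$ together with nestedness gives $\mathcal{M}^c=\bigcup_n J_n$, and including the $n$-th index at stage $n$ guarantees $\mathcal{M}\cup\mathcal{M}^c=\mathbb{I}$.

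For the verification, fix $n$ and split the final weaving sum of $f_n$ over $G_n$ and its complement. On $G_n$ the final partition coincides with the stage-$n$ partition $(I_{n-1}\cup\sigma_n,\, J_{n-1}\cup\delta_n)$ --- this is the whole point of freezing $G_n$ at stage $n$ and never revisiting it --- so the $G_n$-part of the sum is dominated by the left-hand side above, hence $<\varepsilon_n$. Off $G_n$, each index contributes either a $W$-term or a $V$-term, so its total is at most the sum of the two Bessel tails of $f_n$ outside $G_n$, i.e.\ $<2\varepsilon_n$. Thus the full weaving sum of the unit vector $f_n$ is $<3\varepsilon_n\to 0$, so $\{W_i\}_{i\in\mathcal{M}}\cup\{V_i\}_{i\in\mathcal{M}^c}$ has lower bound $0$ and (being Bessel) is not a fusion frame; by the definition of $\rm W.F.F$ this shows $\{W_i\}$ and $\{V_i\}$ are not woven. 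The main obstacle is exactly the control of the still-undecided coordinates: an index committed at a later stage may be assigned to the opposite family from the one that made $f_n$'s partial sum small, producing an uncontrolled cross-term. The freezing-plus-Bessel-tail device is what defeats this, since it confines all possible disagreement to the region outside $G_n$, where the fixed vector $f_n$ has negligible mass in both fusion frames. I would also flag that the argument needs $\sigma_n,\delta_n$ to exhaust the complement (otherwise genuinely undecided indices would remain, and a witness could have large projection onto such a subspace), which is the reading of the hypothesis I adopt.
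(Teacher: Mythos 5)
Your proposal is correct and takes essentially the same approach as the paper: the paper also builds $\mathcal{M}$ by an inductive freeze-and-commit construction, taking a unit witness $f_n$ from the hypothesis applied to $(I_{n-1},J_{n-1})$ with tolerance $\varepsilon/n$, cutting off at a finite set (the paper uses $[k_n]$ where you use $G_n$) chosen so that the fusion-frame tails of $f_n$ are small, and then bounding the final weaving sum of $f_n$ by the stage-$n$ sum plus the two tails. Your explicit remarks on the universal Bessel bound (so that failure of the lower bound is the only way a weaving can fail to be a fusion frame) and on exhausting $\mathbb{I}$ via an enumeration are minor refinements of details the paper leaves implicit.
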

\begin{proof}
Let $ \varepsilon >0 $ be arbitrary. By hypothesis, for $ I_{0}=J_{0}=\phi $, we can choose
$ \sigma_{1}\subset\mathbb{I} $, so that if $ \delta_{1}=\sigma_{1}^{c} $, then the lower fusion frame bound of
$ \left\lbrace W_{i}\right\rbrace _{i\in\sigma_{1}}\bigcup\left\lbrace V_{i}\right\rbrace _{i\in\sigma_{1}^{c} } $ is less than $\varepsilon$ . Thus there exists $ f_{1}\in \mathcal{H} $, with
 $ \Vert f_{1}\Vert=1 $ such that
$$\sum_{i\in\sigma_{1}}\nu_{i}^{2}\Vert P_{W_{i}}(f_{1})\Vert^{2}+
\sum_{i\in\delta_{1}}\mu_{i}^{2}\Vert P_{V_{i}}(f_{1})\Vert^{2}<\varepsilon .$$
Since $ \left\lbrace W_{i}\right\rbrace _{i\in\mathbb{I} } $ and $ \left\lbrace V_{i}\right\rbrace _{i\in\mathbb{I} } $ are fusion frames, so
$$\sum_{i=1}^{\infty}\nu_{i}^{2}\Vert P_{W_{i}}(f_{1})\Vert^{2}+\sum_{i=1}^{\infty}\mu_{i}^{2}\Vert P_{V_{i}}(f_{1})\Vert^{2}<\infty , $$
 therefor there is a positive integer $ k_{1} $ such that
 $$ \sum_{i=k_{1}+1}^{\infty}\nu_{i}^{2}\Vert P_{W_{i}}(f_{1})\Vert^{2}+\sum_{i=k_{1}+1}^{\infty}\mu_{i}^{2}\Vert P_{V_{i}}(f_{1})\Vert^{2}<\infty .$$
Let
$ I_{1}=\sigma_{1}\bigcap\left[ k_{1}\right]  $ and $ J_{1}=\delta_{1}\bigcap\left[ k_{1}\right]  $. Then
$ I_{1}\bigcap J_{1}=\phi $ and $ I_{1}\bigcup J_{1}=\left[ k_{1}\right]  $. By assumption, there are subsets
$ \sigma_{2}, \delta_{2}\subset \left[ k_{1}\right] ^{c} $ with
$ \delta_{2}=\left[ k_{1}\right] ^{c}\setminus\sigma_{2} $ such that
the lower fusion frame bound of
$ \left\lbrace W_{i}\right\rbrace _{i\in (I\cup\sigma_{2})}\bigcup\left\lbrace V_{i}\right\rbrace_{i\in(J_{2}\cup\delta_{2}) } $ is less than $\frac{\varepsilon}{2}$, so there exists a vector $ f_{2}\in\mathcal{H} $ with $ \Vert f_{2}\Vert =1 $, such that
$$\sum_{i\in (I_{1}\cup\sigma_{2})}\nu_{i}^{2}\Vert P_{W_{i}}(f_{2})\Vert^{2}+\sum_{i\in(J_{1}\cup\delta_{2})}\mu_{i}^{2}\Vert P_{V_{i}}(f_{2})\Vert^{2}<\frac{\varepsilon}{2}.$$
 Similarly, there is a $ k_{2}>k_{1} $ such that
$$ \sum_{i=k_{2}+1}^{\infty}\nu_{i}^{2}\Vert P_{W_{i}}(f_{2})\Vert^{2}+\sum_{i=k_{2}+1}^{\infty}\mu_{i}^{2}\Vert P_{V_{i}}(f_{2})\Vert^{2}<\frac{\varepsilon }{2} .$$
 Set $ I_{2}=I_{1}\bigcup\left( \sigma_{2}\bigcap\left[ k_{2}\right] \right)  $ and
 $ J_{2}=J_{1}\bigcup\left(\delta_{2}\bigcap\left[ k_{2}\right] \right)  $.
 Not that $ I_{2}\bigcap J_{2}=\phi  $ and $ I_{2}\bigcup J_{2}=\left[ k_{2}\right]  $. Thus by induction, there are:
\begin{enumerate}
\item[(i)]
 a sequence of natural numbers $ \left\lbrace k_{i}\right\rbrace _{i\in \mathbb{I} } $ with $ k_{i}<k_{i+1} $ for all $ i\in\mathbb{I} $,
\item[(ii)] a sequence of vectors $ \left\lbrace f_{i}\right\rbrace _{i\in\mathbb{I} } $ from $ \mathcal{H} $ with $ \Vert f_{i}\Vert=1 $ for all $ i\in\mathbb{I} $,
\item[(iii)]
 subsets $ \sigma_{i}\subset\left[ k_{i-1}\right] ^{c}, \delta_{i}=\left[ k_{i-1}\right] ^{c}\setminus\sigma_{i}, i\in\mathbb{I}$ and
\item[(iv)] $ I_{i}=I_{i-1}\bigcup\left( \sigma_{i}\bigcap \left[ k_{i} \right] \right) , J_{i}=J_{i-1}\bigcup\left( \delta_{i}\bigcap \left[ k_{i} \right] \right),  i\in\mathbb{I} $
which are abiding both:
\begin{eqnarray} \label{I}
\sum_{i\in (I_{n-1}\cup\sigma_{n})}\nu_{i}^{2}\Vert P_{W_{i}}(f_{n})\Vert^{2}+
 \sum_{i\in(J_{n-1}\cup\delta_{n})}\mu_{i}^{2}\Vert P_{V_{i}}(f_{n})\Vert^{2}<\dfrac{\varepsilon}{n}
 \end{eqnarray}
 and
 \begin{eqnarray}\label{II}
 \sum_{i=k_{n}+1}^{\infty}\nu_{i}^{2}\Vert P_{W_{i}}(f_{n})\Vert^{2}+
\sum_{i=k_{n}+1}^{\infty}\mu_{i}^{2}\Vert P_{V_{i}}(f_{n})\Vert^{2}<\dfrac{\varepsilon }{n}.
\end{eqnarray}

\end{enumerate}
By construction $ I_{i}\bigcap J_{i}=\left\lbrace \right\rbrace  $ and $ I_{i}\bigcup J_{i}=\left[ k_{i}\right]  $ , if we suppose that
$\mathcal{M}=\bigcup_{i=1}^{\infty}I_{i} $ then $ \mathcal{M}^{c}= \bigcup_{i=1}^{\infty}J_{i} $ such that $ \mathcal{M}\bigcup\mathcal{M}^{c}=\mathbb{I} $, then we consequence from inequalities  (\ref{I}) and (\ref{II}):
\begin{eqnarray*}
&& \sum_{i\in \mathcal{M}}\nu_{i}^{2}\Vert P_{W_{i}}(f_{i})\Vert^{2}+
\sum_{i\in\mathcal{M}^{c}}\mu_{i}^{2}\Vert P_{V_{i}}(f_{i})\Vert^{2}\\
&=& \left( \sum_{i\in I_{n}}\nu_{i}^{2}\Vert P_{W_{i}}(f_{n})\Vert^{2}+
\sum_{i\in J_{n}}\mu_{i}^{2}\Vert P_{V_{i}}(f_{n})\Vert^{2}\right)\\
&+&\left( \sum_{i\in \mathcal{M}\cap\left[ k_{n}\right] ^{c}}\nu_{i}^{2}\Vert P_{W_{i}}(f_{i})\Vert^{2}+
\sum_{i\in\mathcal{M}^{c}\cap\left[ k_{n}\right] ^{c}}\mu_{i}^{2}\Vert P_{V_{i}}(f_{i})\Vert^{2}\right) \\
&\leq &\left( \sum_{i\in I_{n-1}\cup\sigma_{n}}\nu_{i}^{2}\Vert P_{W_{i}}(f_{n})\Vert^{2}+
\sum_{i\in J_{n-1}\cup\delta_{n}}\mu_{i}^{2}\Vert P_{V_{i}}(f_{n})\Vert^{2}\right)\\
&+&\left( \sum_{i=k_{n}+1}^{\infty}\nu_{i}^{2}\Vert P_{W_{i}}(f_{i})\Vert^{2}+
\sum_{i=k_{n}+1}^{\infty}\mu_{i}^{2}\Vert P_{V_{i}}(f_{i})\Vert^{2}\right) \\
&< &\frac{\varepsilon}{n}+\frac{\varepsilon }{n}=\frac{2\varepsilon }{n}.
\end{eqnarray*}
Therefor the lower fusion frame of $\left\lbrace W_{i}\right\rbrace _{i\in\mathcal{M}}\bigcup\left\lbrace V_{i}\right\rbrace _{i\in\mathcal{M}^{c} } $ is zero, that is contradiction. Thus $ \left\lbrace W_{i}\right\rbrace _{i\in\mathbb{I} } $ and
$ \left\lbrace V_{i}\right\rbrace _{i\in\mathbb{I} } $ can not be W.F.F .
\end{proof}
This section is concluded by showing that the upper bound in Proposition \ref{P1} can not be optimal for W.F.F.
\begin{proposition}
Suppose that $ \left\lbrace W_{i}\right\rbrace _{i\in\mathbb{I} } $ and $ \left\lbrace V_{i}\right\rbrace _{i\in\mathbb{I} } $ be fusion frames for $ \mathcal{H} $ with respect to weights $ \left\lbrace \nu_{i}\right\rbrace _{i\in\mathbb{I} } $ and
$ \left\lbrace \mu_{i}\right\rbrace _{i\in\mathbb{I} } $ and also with optimal upper fusion frame bounds $ \mathcal{B}_{1} $ and $ \mathcal{B}_{2} $ such that constitute $\rm W.F.F$. Then $ \mathcal{B}_{1}+\mathcal{B}_{2} $ can not be the optimal upper woven bound.
\end{proposition}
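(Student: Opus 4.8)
The plan is to produce an \emph{explicit} upper woven bound strictly smaller than $\mathcal{B}_1+\mathcal{B}_2$; exhibiting any such smaller bound immediately shows that $\mathcal{B}_1+\mathcal{B}_2$ is not the optimal (i.e.\ smallest) upper woven bound. The whole argument rests on the fact that, since $\left\lbrace W_i\right\rbrace_{i\in\mathbb{I}}$ and $\left\lbrace V_i\right\rbrace_{i\in\mathbb{I}}$ constitute a $\rm W.F.F$, they possess a \emph{positive} universal lower woven bound $\mathcal{A}>0$. This positivity is the only place the woven hypothesis enters, and it is the heart of the proof. For a subset $\sigma\subset\mathbb{I}$ and $f\in\mathcal{H}$ I write the weaving sum attached to the partition $(\sigma,\sigma^c)$ as
\begin{equation*}
T(\sigma,f)=\sum_{i\in\sigma}\nu_i^2\Vert P_{W_i}(f)\Vert^2+\sum_{i\in\sigma^c}\mu_i^2\Vert P_{V_i}(f)\Vert^2 .
\end{equation*}

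The key step is to pair $(\sigma,\sigma^c)$ with its \emph{complementary} partition $(\sigma^c,\sigma)$, which swaps the roles of the two systems and has weaving sum
\begin{equation*}
T'(\sigma,f)=\sum_{i\in\sigma^c}\nu_i^2\Vert P_{W_i}(f)\Vert^2+\sum_{i\in\sigma}\mu_i^2\Vert P_{V_i}(f)\Vert^2 .
\end{equation*}
Adding these and regrouping the index sets over all of $\mathbb{I}$ gives the pointwise identity
\begin{equation*}
T(\sigma,f)+T'(\sigma,f)=\sum_{i\in\mathbb{I}}\nu_i^2\Vert P_{W_i}(f)\Vert^2+\sum_{i\in\mathbb{I}}\mu_i^2\Vert P_{V_i}(f)\Vert^2 .
\end{equation*}
Because $\mathcal{B}_1$ and $\mathcal{B}_2$ are (optimal) upper fusion frame bounds for $\left\lbrace W_i\right\rbrace$ and $\left\lbrace V_i\right\rbrace$, the right-hand side is bounded above by $(\mathcal{B}_1+\mathcal{B}_2)\Vert f\Vert^2$.

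Next I would invoke the woven hypothesis on the complementary partition. Since $(\sigma^c,\sigma)$ is itself an admissible partition of $\mathbb{I}$, the family $\left\lbrace W_i\right\rbrace_{i\in\sigma^c}\cup\left\lbrace V_i\right\rbrace_{i\in\sigma}$ is a fusion frame whose lower fusion frame bound is at least the universal lower woven bound, so $T'(\sigma,f)\ge\mathcal{A}\Vert f\Vert^2$. Substituting this into the identity yields
\begin{equation*}
T(\sigma,f)\le(\mathcal{B}_1+\mathcal{B}_2)\Vert f\Vert^2-\mathcal{A}\Vert f\Vert^2=(\mathcal{B}_1+\mathcal{B}_2-\mathcal{A})\Vert f\Vert^2 .
\end{equation*}
As $\sigma$ and $f$ were arbitrary, $\mathcal{B}_1+\mathcal{B}_2-\mathcal{A}$ is a genuine upper woven bound, and since $\mathcal{A}>0$ it is \emph{strictly} smaller than $\mathcal{B}_1+\mathcal{B}_2$. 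Therefore $\mathcal{B}_1+\mathcal{B}_2$ cannot be optimal, which is the assertion.

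The proof is short precisely because the complementary-partition identity does all the work, so there is no genuine analytic obstacle; the only point needing care is the justification that the lower woven bound $\mathcal{A}$ applies to the swapped partition $(\sigma^c,\sigma)$, which is immediate once one observes that $(\sigma,\sigma^c)$ and $(\sigma^c,\sigma)$ are both partitions of $\mathbb{I}$. The one conceptual step—rather than a technical difficulty—is recognizing that the two optimal Bessel bounds $\mathcal{B}_1$ and $\mathcal{B}_2$ are "used up" across complementary weavings, so that the guaranteed positive lower bound of one weaving forces a strict deficit in the other. I would also remark that the argument furnishes the quantitative improvement $\mathcal{B}_1+\mathcal{B}_2-\mathcal{A}$ rather than merely a non-optimality statement.
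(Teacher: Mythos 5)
Your proof is correct, and it shares its key mechanism with the paper's proof---both exploit the fact that the weaving sums of a partition $(\sigma,\sigma^{c})$ and of the swapped partition $(\sigma^{c},\sigma)$ add up to $\sum_{i\in\mathbb{I}}\nu_{i}^{2}\Vert P_{W_{i}}(f)\Vert^{2}+\sum_{i\in\mathbb{I}}\mu_{i}^{2}\Vert P_{V_{i}}(f)\Vert^{2}\le(\mathcal{B}_{1}+\mathcal{B}_{2})\Vert f\Vert^{2}$---but the logical packaging is genuinely different. The paper argues by contradiction: assuming $\mathcal{B}_{1}+\mathcal{B}_{2}$ is the optimal upper woven bound, it picks, for each $\varepsilon>0$, a near-extremal $\sigma$ and a vector $f$ whose weaving sum exceeds $\mathcal{B}_{1}+\mathcal{B}_{2}-\varepsilon$, subtracts from the total to conclude that the swapped weaving has sum at most $\varepsilon$, and then appeals to Theorem \ref{T2} to contradict the W.F.F hypothesis. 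You run the same subtraction in the contrapositive direction: the universal lower woven bound $\mathcal{A}>0$ applied to the swapped partition forces every weaving sum below $(\mathcal{B}_{1}+\mathcal{B}_{2}-\mathcal{A})\Vert f\Vert^{2}$. Your version buys three things: it is direct, with no supremum/$\varepsilon$ bookkeeping; it gives the quantitative conclusion that the optimal upper woven bound is at most $\mathcal{B}_{1}+\mathcal{B}_{2}-\mathcal{A}$ rather than bare non-optimality; and it avoids the paper's appeal to Theorem \ref{T2}, which is in fact the weakest step of the paper's argument, since that theorem's hypothesis (small lower bounds for weavings extending every pair of disjoint finite sets $I$, $J$) is never actually verified there---the contradiction already follows from the definition of W.F.F, which is exactly the fact your proof uses. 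The only thing the paper's framing avoids is naming the lower woven bound $\mathcal{A}$ explicitly, which costs nothing since the W.F.F hypothesis supplies it.
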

\begin{proof}
By contradiction, we assume that $ \mathcal{B}_{1}+\mathcal{B}_{2} $ is the smallest upper weaving bound for all possible weavings. Then by definition of optimal upper bound, we can choose $ \sigma\subset I $ and $ \Vert f\Vert=1 $, such that
\begin{eqnarray*}
\sup_{\Vert f\Vert=1}\left( \sum_{i\in\sigma}\nu_{i}^{2}\Vert P_{W_{i}}(f)\Vert^{2}+
\sum_{i\in\sigma^{c}}\mu_{i}^{2}\Vert P_{V_{i}}(f)\Vert^{2}\right) =\mathcal{B}_{1}+\mathcal{B}_{2}.
\end{eqnarray*}
Using of supreme property, for every $ \varepsilon>0 $, there exist $ f\in\mathcal{H} $, such that
\begin{eqnarray*}
\sum_{i\in \sigma}\nu_{i}^{2}\Vert P_{W_{i}}(f)\Vert^{2}+
\sum_{i\in\sigma^{c}}\mu_{i}^{2}\Vert P_{V_{i}}(f)\Vert^{2} \geq \mathcal{B}_{1}+\mathcal{B}_{2}-\varepsilon ,
\end{eqnarray*}
and using of upper fusion frame property, we have
\begin{eqnarray*}
\sum_{i\in \mathbb{I}}\nu_{i}^{2}\Vert P_{W_{i}}(f)\Vert^{2}+
\sum_{i\in\mathbb{I}}\mu_{i}^{2}\Vert P_{V_{i}}(f)\Vert^{2} \leq \mathcal{B}_{1}+\mathcal{B}_{2}.
\end{eqnarray*}
So:
\begin{eqnarray*}
\sum_{i\in \mathbb{I}\setminus\sigma}\nu_{i}^{2}\Vert P_{W_{i}}(f)\Vert^{2}+
\sum_{i\in\mathbb{I}\setminus\sigma^{c}}\mu_{i}^{2}\Vert P_{V_{i}}(f)\Vert^{2} \leq\varepsilon.
\end{eqnarray*}
Now, if we assume $ \sigma_{1}=\mathbb{I}\setminus\sigma $, then $ \sigma_{1}^{c}=\mathbb{I}\setminus\sigma^{c} $. Therefor
\begin{eqnarray*}
\sum_{i\in\sigma_{1}}\nu_{i}^{2}\Vert P_{W_{i}}(f)\Vert^{2}+
\sum_{i\in\sigma_{1}^{c}}\mu_{i}^{2}\Vert P_{V_{i}}(f)\Vert^{2} \leq\varepsilon ,
\end{eqnarray*}
and this shows that there is a weaving for which the lower frame bound approachs zero. Theorem $ \ref{T2}$ gives that $ \left\lbrace W_{i}\right\rbrace _{i\in\mathbb{I} } $ and $ \left\lbrace V_{i}\right\rbrace _{i\in\mathbb{I} } $ are not W.F.F, which is contradiction.
\end{proof}
\begin{proposition}
Let $ \left\lbrace W_{i}\right\rbrace _{i\in J} $ and $ \left\lbrace V_{i}\right\rbrace _{i\in J} $ be $\rm W.F.F$, with respect to weights $ \left\lbrace \nu_{i}\right\rbrace _{i\in J} $ and $ \left\lbrace \mu_{i}\right\rbrace _{i\in J} $ such that
$ J\subset\mathbb{I} $. Then $ \left\lbrace W_{i}\right\rbrace _{i\in\mathbb{I}} $ and $ \left\lbrace V_{i}\right\rbrace _{i\in\mathbb{I}} $ are $\rm W.F.F$,  with weights
$ \left\lbrace \nu_{i}\right\rbrace _{i\in\mathbb{I}} $ and
$ \left\lbrace \mu_{i}\right\rbrace _{i\in\mathbb{I}} $.
\end{proposition}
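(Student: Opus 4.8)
The plan is to check the two fusion-frame inequalities for an \emph{arbitrary} partition $\sigma\cup\sigma^{c}=\mathbb{I}$ and to show that one pair of universal bounds works for all of them. The lower bound I would get for free by monotonicity, restricting the partition to $J$ and discarding the non-negative contribution of the indices in $\mathbb{I}\setminus J$; the upper bound I would read off from Proposition \ref{P1}, using that the full families $\{W_{i}\}_{i\in\mathbb{I}}$ and $\{V_{i}\}_{i\in\mathbb{I}}$ are fusion frames and hence fusion Bessel sequences.

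For the lower bound, fix $f\in\mathcal{H}$ and a partition $\sigma\cup\sigma^{c}=\mathbb{I}$, and put $\tau=\sigma\cap J$, so that $(\tau,\,\sigma^{c}\cap J)$ is a partition of $J$. Since the subfamily indexed by $J$ is a W.F.F, say with universal lower bound $\mathcal{A}$, the weaving of the subfamily along this partition satisfies
$$\sum_{i\in\tau}\nu_{i}^{2}\Vert P_{W_{i}}(f)\Vert^{2}+\sum_{i\in\sigma^{c}\cap J}\mu_{i}^{2}\Vert P_{V_{i}}(f)\Vert^{2}\geq\mathcal{A}\Vert f\Vert^{2}.$$
As $\tau\subset\sigma$ and $\sigma^{c}\cap J\subset\sigma^{c}$ and every summand is non-negative, enlarging the index sets to all of $\sigma$ and $\sigma^{c}$ only increases the left-hand side, which delivers the lower woven bound $\mathcal{A}$ for the full family with the same constant.

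For the upper bound I would apply Proposition \ref{P1} with $m=2$ to the two fusion Bessel sequences $\{W_{i}\}_{i\in\mathbb{I}}$ and $\{V_{i}\}_{i\in\mathbb{I}}$, with Bessel bounds $\mathcal{B}_{1}$ and $\mathcal{B}_{2}$, respectively. That proposition guarantees that every weaving over $\mathbb{I}$ is Bessel, so
$$\sum_{i\in\sigma}\nu_{i}^{2}\Vert P_{W_{i}}(f)\Vert^{2}+\sum_{i\in\sigma^{c}}\mu_{i}^{2}\Vert P_{V_{i}}(f)\Vert^{2}\leq(\mathcal{B}_{1}+\mathcal{B}_{2})\Vert f\Vert^{2}$$
for every $f$ and every partition. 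Together with the previous step this shows that $\{W_{i}\}_{i\in\mathbb{I}}$ and $\{V_{i}\}_{i\in\mathbb{I}}$ form a W.F.F with universal bounds $\mathcal{A}$ and $\mathcal{B}_{1}+\mathcal{B}_{2}$.

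The only delicate point is the upper bound. The hypothesis constrains only the indices in $J$, so without a uniform Bessel control on the extra subspaces $\{W_{i}\}_{i\in\mathbb{I}\setminus J}$ and $\{V_{i}\}_{i\in\mathbb{I}\setminus J}$ the weavings over $\mathbb{I}$ need not be uniformly bounded above. I would therefore make explicit---consistently with the standing assumptions of the surrounding results---that the full families are fusion frames, which is exactly the Bessel input Proposition \ref{P1} requires; the lower bound, by contrast, is automatic and needs no such hypothesis.
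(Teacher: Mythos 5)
Your proof is correct and follows essentially the same route as the paper's: the lower bound comes from restricting the weaving to the partition $(\sigma\cap J,\,\sigma^{c}\cap J)$ of $J$ and discarding the non-negative extra terms, and the upper bound is the sum of two fusion Bessel bounds. The one place you diverge is also the place where you are more careful than the paper: the paper's proof cites $\mathcal{B}_{W}$ and $\mathcal{B}_{V}$ as upper fusion frame bounds for $\left\lbrace W_{i}\right\rbrace _{i\in J}$ and $\left\lbrace V_{i}\right\rbrace _{i\in J}$, yet applies them to sums over $\sigma\subset\mathbb{I}$ and $\sigma^{c}$, which is only legitimate if the full families $\left\lbrace W_{i}\right\rbrace _{i\in\mathbb{I}}$ and $\left\lbrace V_{i}\right\rbrace _{i\in\mathbb{I}}$ are fusion Bessel sequences over all of $\mathbb{I}$ --- precisely the hypothesis you make explicit before invoking Proposition \ref{P1}. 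Without some such standing assumption the statement itself fails (e.g.\ take $W_{i}=\mathcal{H}$ for infinitely many $i\in\mathbb{I}\setminus J$ with weights $\nu_{i}=1$; then any weaving meeting those indices has no finite upper bound), so the ``delicate point'' you flagged is a genuine defect in the proposition as written, not an artifact of your argument.
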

\begin{proof}
Let the positive constant $ \mathcal{A} $ be the lower woven bound for $ \left\lbrace W_{i}\right\rbrace _{i\in J} $ and
$ \left\lbrace V_{i}\right\rbrace _{i\in J} $. Then for every $ \sigma\subset \mathbb{I} $ and $ f\in\mathcal{H} $, we have
\begin{eqnarray*}
\mathcal{A}\Vert f\Vert^{2}&\leq & \sum_{i\in\sigma\cap J}\nu_{i}^{2}\Vert P_{W_{i}}(f)\Vert^{2} +
\sum_{i\in\sigma^{c}\cap J}\mu_{i}^{2}\Vert P_{V_{i}}(f)\Vert^{2}\\
&\leq & \sum_{i\in\sigma}\nu_{i}^{2}\Vert P_{W_{i}}(f)\Vert^{2} +
\sum_{i\in\sigma^{c}}\mu_{i}^{2}\Vert P_{V_{i}}(f)\Vert^{2}\\
&\leq &\left( \mathcal{B}_{W}+\mathcal{B}_{V}\right) \Vert f\Vert^{2},
\end{eqnarray*}
where $ \mathcal{B}_{W} $ and $ \mathcal{B}_{V} $ are upper fusion frame bounds for $ \left\lbrace W_{i}\right\rbrace _{i\in J} $ and $ \left\lbrace V_{i}\right\rbrace _{i\in J} $, respectively.
\end{proof}
\begin{proposition}
Suppose $ \left\lbrace W_{i}\right\rbrace _{i\in\mathbb{I} } $ and $ \left\lbrace V_{i}\right\rbrace _{i\in\mathbb{I} } $ are fusion frames for $ \mathcal{H} $ with respect to weights $ \left\lbrace \nu_{i}\right\rbrace _{i\in\mathbb{I} } $ with universal woven bounds $ \mathcal{A} $ and $ \mathcal{B} $. For some constant $ 0<\mathcal{D}<\mathcal{A} $ and $ J\subset\mathbb{I} $, if we have:
$$ \sum_{i\in J}\nu_{i}^{2}\Vert P_{W_{i}}(f)\Vert^{2}\leq\mathcal{D}\Vert f\Vert^{2},\quad\forall f\in \mathcal{H}. $$
Then $ \left\lbrace W_{i}\right\rbrace _{i\in\mathbb{I}\setminus J } $ and
$ \left\lbrace V_{i}\right\rbrace _{i\in\mathbb{I}\setminus J } $ are fusion frames for $ \mathcal{H} $ and are
$\rm W.F.F$ with universal lower and upper woven bounds $ \mathcal{A}-\mathcal{D} $ and $ \mathcal{B} $, respectively.
\end{proposition}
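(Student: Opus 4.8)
The plan is to verify the woven fusion frame inequalities directly for the truncated families. Fix an arbitrary subset $\sigma\subset\mathbb{I}\setminus J$ and consider the corresponding weaving $\left\lbrace W_i\right\rbrace_{i\in\sigma}\bigcup\left\lbrace V_i\right\rbrace_{i\in(\mathbb{I}\setminus J)\setminus\sigma}$; the goal is to bound its fusion frame sum between $(\mathcal{A}-\mathcal{D})\Vert f\Vert^{2}$ and $\mathcal{B}\Vert f\Vert^{2}$ for every $f\in\mathcal{H}$. The essential device is to lift $\sigma$ to a partition of the full index set $\mathbb{I}$ by \emph{assigning every index of $J$ to the $W$-side}: set $\widetilde{\sigma}=\sigma\cup J$, so that $\widetilde{\sigma}^{c}=(\mathbb{I}\setminus J)\setminus\sigma$. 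Then $\left\lbrace W_i\right\rbrace_{i\in\widetilde{\sigma}}\bigcup\left\lbrace V_i\right\rbrace_{i\in\widetilde{\sigma}^{c}}$ is one of the weavings already controlled by the woven bounds $\mathcal{A}$, $\mathcal{B}$ of the original pair.

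For the lower bound, I would apply the woven lower estimate to $\widetilde{\sigma}$ and then split off the block over $J$:
\begin{eqnarray*}
\mathcal{A}\Vert f\Vert^{2}\leq\sum_{i\in\sigma}\nu_i^{2}\Vert P_{W_i}(f)\Vert^{2}+\sum_{i\in J}\nu_i^{2}\Vert P_{W_i}(f)\Vert^{2}+\sum_{i\in(\mathbb{I}\setminus J)\setminus\sigma}\mu_i^{2}\Vert P_{V_i}(f)\Vert^{2}.
\end{eqnarray*}
Moving the $J$-block to the left and invoking the hypothesis $\sum_{i\in J}\nu_i^{2}\Vert P_{W_i}(f)\Vert^{2}\leq\mathcal{D}\Vert f\Vert^{2}$ yields
\begin{eqnarray*}
\sum_{i\in\sigma}\nu_i^{2}\Vert P_{W_i}(f)\Vert^{2}+\sum_{i\in(\mathbb{I}\setminus J)\setminus\sigma}\mu_i^{2}\Vert P_{V_i}(f)\Vert^{2}\geq(\mathcal{A}-\mathcal{D})\Vert f\Vert^{2},
\end{eqnarray*}
which is a genuine positive lower bound precisely because $\mathcal{D}<\mathcal{A}$.

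For the upper bound there is nothing to subtract: every summand is nonnegative, so discarding the $J$-block only decreases the sum, and the woven upper estimate for $\widetilde{\sigma}$ gives
\begin{eqnarray*}
\sum_{i\in\sigma}\nu_i^{2}\Vert P_{W_i}(f)\Vert^{2}+\sum_{i\in(\mathbb{I}\setminus J)\setminus\sigma}\mu_i^{2}\Vert P_{V_i}(f)\Vert^{2}\leq\sum_{i\in\widetilde{\sigma}}\nu_i^{2}\Vert P_{W_i}(f)\Vert^{2}+\sum_{i\in\widetilde{\sigma}^{c}}\mu_i^{2}\Vert P_{V_i}(f)\Vert^{2}\leq\mathcal{B}\Vert f\Vert^{2}.
\end{eqnarray*}
Since $\sigma$ was arbitrary, this shows $\left\lbrace W_i\right\rbrace_{i\in\mathbb{I}\setminus J}$ and $\left\lbrace V_i\right\rbrace_{i\in\mathbb{I}\setminus J}$ are {\rm W.F.F} with universal woven bounds $\mathcal{A}-\mathcal{D}$ and $\mathcal{B}$; the two individual fusion frame assertions then fall out as the special weavings $\sigma=\mathbb{I}\setminus J$ and $\sigma=\emptyset$.

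The argument is short, and I do not expect a real obstacle. The one point that must be handled with care is the \emph{asymmetry} of the hypothesis: only the $W$-block over $J$ is assumed small, so the lift must throw all of $J$ onto the $W$-side. Placing any part of $J$ on the $V$-side would instead require a smallness bound on $\sum_{i\in J}\mu_i^{2}\Vert P_{V_i}(f)\Vert^{2}$, which is not available. Recognizing this forced choice is the crux; everything else is monotonicity of nonnegative sums together with the woven bounds of the original pair.
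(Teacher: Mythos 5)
Your proposal is correct and follows essentially the same route as the paper: both arguments lift a weaving $\sigma\subset\mathbb{I}\setminus J$ to the partition $\left(\sigma\cup J,\ \mathbb{I}\setminus(\sigma\cup J)\right)$ of $\mathbb{I}$ (i.e., force all of $J$ onto the $W$-side), apply the original woven bounds $\mathcal{A}$, $\mathcal{B}$, and then subtract the $J$-block via the hypothesis $\sum_{i\in J}\nu_i^{2}\Vert P_{W_i}(f)\Vert^{2}\leq\mathcal{D}\Vert f\Vert^{2}$ for the lower bound and discard it by nonnegativity for the upper bound. Your closing observation that the two fusion frame claims are just the extreme weavings $\sigma=\mathbb{I}\setminus J$ and $\sigma=\emptyset$ is a slightly cleaner packaging of what the paper verifies by a separate (but identical) computation.
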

\begin{proof}
Assume $ \sigma\subset \mathbb{I}\setminus J $. Then for all $ f\in\mathcal{H} $
\begin{eqnarray*}
&&\sum_{i\in\sigma}\nu_{i}^{2}\Vert P_{W_{i}}(f)\Vert^{2} +
\sum_{i\in (\mathbb{I}\setminus J)\setminus \sigma}\mu_{i}^{2}\Vert P_{V_{i}}(f)\Vert^{2}\\
&=&
\left( \sum_{i\in\sigma\cup J}\nu_{i}^{2}\Vert P_{W_{i}}(f)\Vert^{2}-\sum_{i\in J}\nu_{i}^{2}\Vert P_{W_{i}}(f)\Vert^{2}\right) +\sum_{i\in (\mathbb{I}\setminus J)\setminus \sigma}\mu_{i}^{2}\Vert P_{V_{i}}(f)\Vert^{2}\\
&=& \left(\sum_{i\in\sigma\cup J}\nu_{i}^{2}\Vert P_{W_{i}}(f)\Vert^{2} +\sum_{i\in\mathbb{I}\setminus(J\cup\sigma)}\mu_{i}^{2}\Vert P_{V_{i}}(f)\Vert^{2}\right)
-\sum_{i\in J}\nu_{i}^{2}\Vert P_{W_{i}}(f)\Vert^{2}\\
&\geq &\left( \mathcal{A}-\mathcal{D}\right) \Vert f\Vert^{2}.
\end{eqnarray*}
 For upper woven bound, we have
\begin{eqnarray*}
&&\sum_{i\in\sigma}\nu_{i}^{2}\Vert P_{W_{i}}(f)\Vert^{2} +
\sum_{i\in (\mathbb{I}\setminus J)\setminus \sigma}\mu_{i}^{2}\Vert P_{V_{i}}(f)\Vert^{2}\\
&\leq & \sum_{i\in\sigma \cup J}\nu_{i}^{2}\Vert P_{W_{i}}(f)\Vert^{2} +
\sum_{i\in\mathbb{I}\setminus (\sigma\cup J)}\mu_{i}^{2}\Vert P_{V_{i}}(f)\Vert^{2}\\
&\leq & \mathcal{B}\Vert f\Vert^{2}.
\end{eqnarray*}
Thus $ \left\lbrace W_{i}\right\rbrace _{i\in\mathbb{I}\setminus J } $ and
$ \left\lbrace V_{i}\right\rbrace _{i\in\mathbb{I}\setminus J } $ are W.F.F.
Now, if we take $ \sigma =\mathbb{I} $ and $ \sigma^{c}=\phi $, then $ \left\lbrace W_{i}\right\rbrace _{i\in\mathbb{I}\setminus J} $ is fusion frame:
\begin{eqnarray*}
&&\sum_{i\in\mathbb{I}\setminus J }\nu_{i}^{2}\Vert P_{W_{i}}(f)\Vert^{2}\\
&=&\sum_{i\in\mathbb{I}}\nu_{i}^{2}\Vert P_{W_{i}}(f)\Vert^{2}-\sum_{i\in J }\nu_{i}^{2}\Vert P_{W_{i}}(f)\Vert^{2}\\
&=& \sum_{i\in\sigma}\nu_{i}^{2}\Vert P_{W_{i}}(f)\Vert^{2}+\sum_{i\in\sigma^{c}}\mu_{i}^{2}\Vert P_{V_{i}}(f)\Vert^{2}
-\sum_{i\in J }\nu_{i}^{2}\Vert P_{W_{i}}(f)\Vert^{2}\\
&\geq & \left(\mathcal{A}-\mathcal{D} \right) \Vert f\Vert^{2}.
\end{eqnarray*}
Similar to above, we can demonstrate that $ \left\lbrace V_{i}\right\rbrace_{i\in\mathbb{I}\setminus J}$ is a fusion frame with same bounds.
\end{proof}
\section{Perturbation of woven of subspaces (W.F.F)}
It is well known that the perturbation theory is a paramount component in the study of frames. In this section, we show that those of fusion frames that are small perturbations of each other, constitute W.F.F. We start this section with Paley-Wiener perturbation of weaving fusion frames and continue two results of perturbations in the sequel.
\begin{theorem}
\label{T4.1}
Let $ \left\lbrace W_{i}\right\rbrace _{i\in\mathbb{I} } $ and $ \left\lbrace V_{i}\right\rbrace _{i\in\mathbb{I} } $ be fusion frames for $ \mathcal{H} $ with weights $ \left\lbrace \nu_{i}\right\rbrace _{i\in\mathbb{I} } $ and
$ \left\lbrace \mu_{i}\right\rbrace _{i\in\mathbb{I}} $ and fusion frame bounds $ \left( \mathcal{A}_{W}, \mathcal{B}_{W}\right)  $ and $ \left( \mathcal{A}_{V}, \mathcal{B}_{V}\right)  $, respectively. If there exist constants $ 0<\lambda_{1}, \lambda_{2},\mu<1 $ such that:
$$ \dfrac{2}{\mathcal{A}_{W} }\left( \sqrt{\mathcal{B}_{W}}+\sqrt{\mathcal{B}_{V}}\right)
 \left( \lambda_{1}\sqrt{\mathcal{B}_{W}}+\lambda_{2}\sqrt{\mathcal{B}_{V}}+\mu\right)
\leq 1 $$
and
\begin{eqnarray}
\label{3}
\Vert T_{W, \nu}(f)-T_{V,\mu}(f)\Vert\leq \lambda_{1}\Vert T_{W, \nu}(f)\Vert+\lambda_{2}\Vert T_{V,\mu}(f)\Vert +\mu .
\end{eqnarray}
Then $ \left\lbrace W_{i}\right\rbrace _{i\in\mathbb{I} } $ and $ \left\lbrace V_{i}\right\rbrace _{i\in\mathbb{I} } $ are $\rm W.F.F $ .
\end{theorem}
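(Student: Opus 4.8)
The plan is to verify the two universal weaving bounds separately, viewing each weaving fusion frame operator as a self-adjoint perturbation of the single fusion frame operator $S_{W,\nu}$, whose lower bound $\mathcal{A}_{W}$ is already known. Fix an arbitrary $\sigma\subset\mathbb{I}$ and let
$$ S_{\sigma}f=\sum_{i\in\sigma}\nu_{i}^{2}P_{W_{i}}(f)+\sum_{i\in\sigma^{c}}\mu_{i}^{2}P_{V_{i}}(f) $$
be the candidate fusion frame operator of the weaving $\left\lbrace W_{i}\right\rbrace_{i\in\sigma}\bigcup\left\lbrace V_{i}\right\rbrace_{i\in\sigma^{c}}$. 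The universal upper bound is immediate and $\sigma$-independent: discarding and swapping terms only decreases the sums, so $\langle S_{\sigma}f,f\rangle\leq\langle S_{W,\nu}f,f\rangle+\langle S_{V,\mu}f,f\rangle\leq(\mathcal{B}_{W}+\mathcal{B}_{V})\Vert f\Vert^{2}$ for every $f$, which is the two-family analogue of Proposition \ref{P1}.

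The substance is the uniform lower bound, and this is where the hypotheses enter. The key step is to control $\Vert S_{W,\nu}-S_{\sigma}\Vert$ independently of $\sigma$. First I would write
$$ \langle (S_{W,\nu}-S_{\sigma})f,f\rangle=\sum_{i\in\sigma^{c}}\left(\Vert\nu_{i}P_{W_{i}}(f)\Vert^{2}-\Vert\mu_{i}P_{V_{i}}(f)\Vert^{2}\right), $$
apply the identity $\Vert a\Vert^{2}-\Vert b\Vert^{2}=\mathrm{Re}\langle a-b,a+b\rangle$ termwise, and then use Cauchy--Schwarz in $\ell^{2}$ to obtain
$$ |\langle (S_{W,\nu}-S_{\sigma})f,f\rangle|\leq\Big(\sum_{i}\Vert\nu_{i}P_{W_{i}}(f)-\mu_{i}P_{V_{i}}(f)\Vert^{2}\Big)^{1/2}\Big(\sum_{i}\Vert\nu_{i}P_{W_{i}}(f)+\mu_{i}P_{V_{i}}(f)\Vert^{2}\Big)^{1/2}, $$
where extending both sums from $\sigma^{c}$ to all of $\mathbb{I}$ is exactly what removes the dependence on $\sigma$. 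By Minkowski's inequality and the fusion frame upper bounds, the second factor is at most $(\sqrt{\mathcal{B}_{W}}+\sqrt{\mathcal{B}_{V}})\Vert f\Vert$, while the first factor is precisely the left side of the perturbation hypothesis (\ref{3}) read at the analysis level; bounding $\Vert U_{W,\nu}f\Vert\leq\sqrt{\mathcal{B}_{W}}\Vert f\Vert$ and $\Vert U_{V,\mu}f\Vert\leq\sqrt{\mathcal{B}_{V}}\Vert f\Vert$ makes it at most $(\lambda_{1}\sqrt{\mathcal{B}_{W}}+\lambda_{2}\sqrt{\mathcal{B}_{V}}+\mu)\Vert f\Vert$.

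Since $S_{W,\nu}-S_{\sigma}$ is self-adjoint, taking the supremum over $\Vert f\Vert=1$ gives $\Vert S_{W,\nu}-S_{\sigma}\Vert\leq(\sqrt{\mathcal{B}_{W}}+\sqrt{\mathcal{B}_{V}})(\lambda_{1}\sqrt{\mathcal{B}_{W}}+\lambda_{2}\sqrt{\mathcal{B}_{V}}+\mu)$, which the first displayed hypothesis of the theorem forces to be at most $\mathcal{A}_{W}/2$. Combining this with $\langle S_{W,\nu}f,f\rangle\geq\mathcal{A}_{W}\Vert f\Vert^{2}$ yields
$$ \langle S_{\sigma}f,f\rangle\geq\langle S_{W,\nu}f,f\rangle-\Vert S_{W,\nu}-S_{\sigma}\Vert\,\Vert f\Vert^{2}\geq\Big(\mathcal{A}_{W}-\tfrac{\mathcal{A}_{W}}{2}\Big)\Vert f\Vert^{2}=\tfrac{\mathcal{A}_{W}}{2}\Vert f\Vert^{2}, $$
so every weaving is a fusion frame with universal bounds $\mathcal{A}_{W}/2$ and $\mathcal{B}_{W}+\mathcal{B}_{V}$, proving the claim. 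I expect the principal obstacle to be interpretive rather than computational: condition (\ref{3}) is phrased through the operators $T_{W,\nu},T_{V,\mu}$, so the crux is to recognize that the quantity actually governing $\Vert S_{W,\nu}-S_{\sigma}\Vert$ is the analysis difference $\big(\sum_{i}\Vert\nu_{i}P_{W_{i}}f-\mu_{i}P_{V_{i}}f\Vert^{2}\big)^{1/2}$ and that (\ref{3}) bounds exactly this; once that identification is made and the factor of $2$ is traced to the self-adjoint perturbation estimate, uniformity over $\sigma$ follows automatically because every estimate is taken over the full index set $\mathbb{I}$.
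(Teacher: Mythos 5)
Your proof is correct, arrives at the paper's own universal bounds $\mathcal{A}_{W}/2$ and $\mathcal{B}_{W}+\mathcal{B}_{V}$, and does so by a genuinely different technical route. The paper never works with the quadratic form: for each $\sigma$ it introduces the restricted operators $T^{\sigma}_{W,\nu}, U^{\sigma}_{W,\nu}, T^{\sigma}_{V,\mu}, U^{\sigma}_{V,\mu}$, controls the difference of the partial frame operators through the telescoping factorization $T^{\sigma}_{W,\nu}U^{\sigma}_{W,\nu}-T^{\sigma}_{V,\mu}U^{\sigma}_{V,\mu}=T^{\sigma}_{W,\nu}\left(U^{\sigma}_{W,\nu}-U^{\sigma}_{V,\mu}\right)+\left(T^{\sigma}_{W,\nu}-T^{\sigma}_{V,\mu}\right)U^{\sigma}_{V,\mu}$, estimates each factor by full operator norms (using $\Vert T^{\sigma}\Vert\leq\Vert T\Vert$ and $\Vert T^{\sigma}_{W,\nu}-T^{\sigma}_{V,\mu}\Vert\leq\Vert T_{W,\nu}-T_{V,\mu}\Vert$) to obtain the same $\tfrac{\mathcal{A}_{W}}{2}\Vert f\Vert$ bound, and then applies the reverse triangle inequality to the decomposition of the weaving operator as $S_{W,\nu}$ minus that difference, ending with the norm estimate $\Vert S_{\sigma}f\Vert\geq\tfrac{\mathcal{A}_{W}}{2}\Vert f\Vert$. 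You instead bound $\vert\langle (S_{W,\nu}-S_{\sigma})f,f\rangle\vert$ directly via the identity $\Vert a\Vert^{2}-\Vert b\Vert^{2}=\mathrm{Re}\langle a-b,a+b\rangle$, Cauchy--Schwarz and Minkowski, with $\sigma$-uniformity coming from enlarging nonnegative sums to all of $\mathbb{I}$. Your route buys two things: the conclusion is exactly the fusion-frame quadratic-form inequality (the paper's $\Vert S_{\sigma}f\Vert\geq\tfrac{\mathcal{A}_{W}}{2}\Vert f\Vert$ yields a lower frame bound only after invoking positivity and self-adjointness of $S_{\sigma}$, a conversion left implicit there), and no restricted synthesis/analysis operators are needed; indeed even your own detour through $\Vert S_{W,\nu}-S_{\sigma}\Vert$ is dispensable, since the form estimate plugs directly into $\langle S_{\sigma}f,f\rangle\geq\langle S_{W,\nu}f,f\rangle-\tfrac{\mathcal{A}_{W}}{2}\Vert f\Vert^{2}$. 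What the paper's route buys is brevity once hypothesis (\ref{3}) is granted as an operator-norm statement. Finally, a caveat that applies to both proofs equally: since $T_{W,\nu}$ and $T_{V,\mu}$ have different domains, (\ref{3}) does not literally typecheck for a single argument $f$, and its additive constant $\mu$ (rather than $\mu\Vert f\Vert$) is inhomogeneous; the paper tacitly reads it as $\Vert T_{W,\nu}-T_{V,\mu}\Vert\leq\lambda_{1}\sqrt{\mathcal{B}_{W}}+\lambda_{2}\sqrt{\mathcal{B}_{V}}+\mu$, while you read it pointwise at the analysis level --- equivalent liberties producing the same effective estimate, so this is a defect of the statement itself, not a gap in your argument.
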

\begin{proof}
For each $ \sigma\subset\mathbb{I} $, we define the bounded operators
$$ T_{W, \nu}^{\sigma}:\left( \sum_{i\in\sigma}\bigoplus W_{i}\right) _{\ell^{2}}\longrightarrow\mathcal{H}, \quad
T_{W, \nu}^{\sigma}(f)=\sum_{i\in\sigma}\nu_{i}f_{i}$$
and
$$ T_{V, \mu}^{\sigma}:\left( \sum_{i\in\sigma}\bigoplus V_{i}\right) _{\ell^{2}}\longrightarrow\mathcal{H},\quad T_{V, \mu}^{\sigma}(f)=\sum_{i\in\sigma}\mu_{i}f_{i}~. $$
For every
$ f=\left\lbrace f_{i}\right\rbrace _{i\in\mathbb{I} }\in\left( \sum_{i\in\sigma}\bigoplus W_{i}\right) _{\ell^{2}}$,
note that $ \Vert T_{W, \nu}^{\sigma}(f)\Vert\leq\Vert T_{W, \nu}(f)\Vert $,
$ \Vert T_{V, \mu}^{\sigma}(f)\Vert\leq\Vert T_{V, \mu}(f)\Vert $ and
$ \Vert T_{W, \nu}^{\sigma}(f)-T_{V, \mu}^{\sigma}(f)\Vert\leq\Vert T_{W, \nu}(f)-T_{V, \mu}(f)\Vert $. Using the statement $ (\ref{3} ) $, for every $ f\in\mathcal{H} $ and $ \sigma\subset\mathbb{I} $, we have
\begin{eqnarray*}
&&\Vert T_{W,\nu}^{\sigma} U_{W,\nu}^{\sigma}(f)-T_{V,\mu}^{\sigma} U_{V,\mu}^{\sigma}(f)\Vert \\
&=& \Vert T_{W,\nu}^{\sigma} U_{W,\nu}^{\sigma}(f)- T_{W,\nu}^{\sigma}U_{V,\mu}^{\sigma}(f)+
T_{W,\nu}^{\sigma}U_{V,\mu}^{\sigma}(f)-T_{V,\mu}^{\sigma} U_{V,\mu}^{\sigma}(f)\Vert  \\
&\leq &\Vert T_{W,\nu}^{\sigma} \left( U_{W,\nu}^{\sigma}- U_{V,\mu}^{\sigma}\right) (f)\Vert +
\Vert \left( T_{W,\nu}^{\sigma}-T_{V,\mu}^{\sigma}\right) U_{V,\mu}^{\sigma}(f)\Vert  \\
&\leq & \Vert T_{W,\nu}^{\sigma} \Vert \Vert U_{W,\nu}^{\sigma}- U_{V,\mu}^{\sigma}\Vert\Vert f\Vert +
\Vert T_{W,\nu}^{\sigma}-T_{V,\mu}^{\sigma}\Vert\Vert U_{V,\mu}^{\sigma}\Vert\Vert f\Vert  \\
&\leq & \Vert T_{W,\nu} \Vert \Vert T_{W,\nu}- T_{V,\mu}\Vert\Vert f\Vert +
\Vert T_{W,\nu}-T_{V,\mu}\Vert\Vert T_{V,\mu}\Vert\Vert f\Vert  \\
 &\leq & \Vert T_{W,\nu}-T_{V,\mu}\Vert\left( \Vert T_{W,\nu} \Vert +\Vert T_{V,\mu}\Vert \right) \Vert f\Vert \\
&\leq & \left( \lambda_{1}\sqrt{\mathcal{B}_{W}}+\lambda_{2}\sqrt{\mathcal{B}_{V}}+\mu\right)
\left( \sqrt{\mathcal{B}_{W}}+\sqrt{\mathcal{B}_{V}}\right) \Vert f\Vert\\
&\leq &\dfrac{\mathcal{A}_{W}}{2}\Vert f\Vert .
\end{eqnarray*}
Now by using above calculation, we have
\begin{eqnarray*}
&& \Vert T_{W,\nu}^{\sigma^{c}} U_{W,\nu}^{\sigma^{c}}(f)+T_{V,\mu}^{\sigma} U_{V,\mu}^{\sigma}(f)\Vert \\
&=&\Vert T_{W,\nu}^{\sigma^{c}} U_{W,\nu}^{\sigma^{c}}(f)+T_{W,\nu}^{\sigma} U_{W,\nu}^{\sigma}(f)-
 T_{W,\nu}^{\sigma} U_{W,\nu}^{\sigma}(f)-T_{V,\mu}^{\sigma} U_{V,\mu}^{\sigma}(f)\Vert \\
&=& \Vert T_{W,\nu} U_{W,\nu}(f)-T_{W,\nu}^{\sigma} U_{W,\nu}^{\sigma}(f)-T_{V,\mu}^{\sigma} U_{V,\mu}^{\sigma}(f)\Vert \\
&\geq &\Vert T_{W,\nu} U_{W,\nu}(f)\Vert -
\Vert T_{V,\mu}^{\sigma} U_{V,\mu}^{\sigma}(f)- T_{W,\nu}^{\sigma} U_{W,\nu}^{\sigma}(f)\Vert \\
&\geq & \mathcal{A}_{W}\Vert f\Vert -\dfrac{\mathcal{A}_{W}}{2}\Vert f\Vert\\ &=&\dfrac{\mathcal{A}_{W}}{2}\Vert f\Vert,\quad \forall f\in\mathcal{H}.
\end{eqnarray*}
This shows that $ \dfrac{\mathcal{A}_{W}}{2} $ is the universal lower woven bound. Finally, for universal upper bound, we have
\begin{eqnarray*}
\Vert T_{W,\nu}^{\sigma^{c}} U_{W,\nu}^{\sigma^{c}}(f)+T_{V,\mu}^{\sigma} U_{V,\mu}^{\sigma}(f)\Vert
&\leq & \Vert T_{W,\nu}^{\sigma^{c}} U_{W,\nu}^{\sigma^{c}}(f)\Vert +\Vert T_{V,\mu}^{\sigma} U_{V,\mu}^{\sigma}(f)\Vert \\
&\leq &\Vert T_{W,\nu} U_{W,\nu}(f)\Vert +\Vert T_{V,\mu}U_{V,\mu}(f)\Vert\\
&\leq &\left(\mathcal{B}_{W}+\mathcal{B}_{V} \right) \Vert f\Vert .
\end{eqnarray*}
\end{proof}

\begin{theorem}
Let $ \left\lbrace W_{i}\right\rbrace _{i\in\mathbb{I} } $ and $ \left\lbrace V_{i}\right\rbrace _{i\in\mathbb{I} } $ be fusion frames for $ \mathcal{H} $ with weights $ \left\lbrace \nu_{i}\right\rbrace _{i\in\mathbb{I} } $ and
$ \left\lbrace \mu_{i}\right\rbrace _{i\in\mathbb{I}} $ and fusion frame bounds $ \left( \mathcal{A}_{W}, \mathcal{B}_{W}\right)  $ and $ \left( \mathcal{A}_{V}, \mathcal{B}_{V}\right)  $, respectively and the operators $ \left( T_{W,\nu}, U_{W,\nu}\right)  $ and
$ \left( T_{V,\mu}, U_{V,\mu}\right)  $ are the synthesis and analysis operators for these frames. If there exist constants
$ 0<\lambda,\mu,\gamma <1 $, such that
$ \lambda\mathcal{B}_{W}+\mu\mathcal{B}_{\mu}+\gamma\sqrt{B_{W}}<\mathcal{A}_{W} $ and for $ f\in\mathcal{H} $ and arbitrary $ \sigma\subset\mathbb{I} $, we have
\begin{eqnarray*}
&&\Vert T_{W,\nu}^{\sigma} U_{W,\nu}^{\sigma}(f)-T_{V,\mu}^{\sigma} U_{V,\mu}^{\sigma}(f)\Vert\\ &\leq &\lambda\Vert T_{W,\nu}^{\sigma} U_{W,\nu}^{\sigma}(f)\Vert +\mu\Vert T_{V,\mu}^{\sigma} U_{V\mu}^{\sigma}(f)\Vert +\gamma \Vert U_{W,\nu}^{\sigma}(f)\Vert .
\end{eqnarray*}
Such that $ T_{W,\nu}^{\sigma} $ , $ U^{\sigma}_{W,\nu} $, $ T_{V,\mu}^{\sigma} $ and $ U_{V,\mu}^{\sigma} $ are the same as Theorem $\ref{T4.1}$. Then $ \left\lbrace W_{i}\right\rbrace _{i\in\mathbb{I} } $ and $ \left\lbrace V_{i}\right\rbrace _{i\in\mathbb{I} } $
are $\rm W.F.F $, with universal woven bounds
$$\left( \mathcal{A}_{W}-\lambda\mathcal{B}_{W}-\mu\mathcal{B}_{V}-\gamma\sqrt{B_{W}}\right) \quad, \quad\left( \mathcal{A}_{W}+\lambda\mathcal{B}_{W}+\mu\mathcal{B}_{V}+\gamma\sqrt{B_{W}}\right) . $$
\end{theorem}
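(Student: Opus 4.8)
The plan is to fix an arbitrary $\sigma\subset\mathbb{I}$ and analyze the fusion frame operator of the weaving $\left\lbrace W_i\right\rbrace_{i\in\sigma}\bigcup\left\lbrace V_i\right\rbrace_{i\in\sigma^c}$, which by the definition of the fusion frame operator is
$$ S_\sigma(f)=\sum_{i\in\sigma}\nu_i^2 P_{W_i}(f)+\sum_{i\in\sigma^c}\mu_i^2 P_{V_i}(f)=T_{W,\nu}^\sigma U_{W,\nu}^\sigma(f)+T_{V,\mu}^{\sigma^c}U_{V,\mu}^{\sigma^c}(f). $$
It suffices to produce constants, independent of $\sigma$, that bound $\left\langle S_\sigma f,f\right\rangle$ above and below by multiples of $\Vert f\Vert^2$; such constants are then universal woven bounds and give the claim.

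First I would record the partial-operator norm estimates already used in Theorem \ref{T4.1}, namely $\Vert T_{W,\nu}^\sigma\Vert\leq\Vert T_{W,\nu}\Vert\leq\sqrt{\mathcal{B}_W}$ and $\Vert U_{W,\nu}^\sigma(f)\Vert\leq\Vert U_{W,\nu}(f)\Vert\leq\sqrt{\mathcal{B}_W}\Vert f\Vert$, together with their $V$-analogues. Composing these gives $\Vert T_{W,\nu}^\sigma U_{W,\nu}^\sigma(f)\Vert\leq\mathcal{B}_W\Vert f\Vert$ and $\Vert T_{V,\mu}^\sigma U_{V,\mu}^\sigma(f)\Vert\leq\mathcal{B}_V\Vert f\Vert$ for every $\sigma$, and likewise $\Vert U_{W,\nu}^\sigma(f)\Vert\leq\sqrt{\mathcal{B}_W}\Vert f\Vert$.

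The crucial step is a cancellation. Comparing $S_\sigma$ with the full fusion frame operator $S_{W,\nu}=\sum_{i\in\mathbb{I}}\nu_i^2 P_{W_i}$, the contributions over $\sigma$ coincide, so
$$ S_{W,\nu}(f)-S_\sigma(f)=T_{W,\nu}^{\sigma^c}U_{W,\nu}^{\sigma^c}(f)-T_{V,\mu}^{\sigma^c}U_{V,\mu}^{\sigma^c}(f). $$
This is exactly the difference controlled by the hypothesis, applied with the index set $\sigma^c$ (the hypothesis is assumed for arbitrary subsets). Inserting the norm estimates of the previous paragraph into the right-hand side of the hypothesis yields $\Vert S_{W,\nu}(f)-S_\sigma(f)\Vert\leq\left( \lambda\mathcal{B}_W+\mu\mathcal{B}_V+\gamma\sqrt{\mathcal{B}_W}\right)\Vert f\Vert$, and I abbreviate $\kappa=\lambda\mathcal{B}_W+\mu\mathcal{B}_V+\gamma\sqrt{\mathcal{B}_W}$, which is $<\mathcal{A}_W$ by assumption.

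Finally I would write $\left\langle S_\sigma f,f\right\rangle=\left\langle S_{W,\nu}f,f\right\rangle-\left\langle\left( S_{W,\nu}-S_\sigma\right) f,f\right\rangle$ and estimate the second term by Cauchy--Schwarz with the $\kappa$-bound. Since $\mathcal{A}_W\Vert f\Vert^2\leq\left\langle S_{W,\nu}f,f\right\rangle\leq\mathcal{B}_W\Vert f\Vert^2$, this gives
$$ \left( \mathcal{A}_W-\kappa\right)\Vert f\Vert^2\leq\left\langle S_\sigma f,f\right\rangle\leq\left( \mathcal{B}_W+\kappa\right)\Vert f\Vert^2, $$
with $\mathcal{A}_W-\kappa>0$. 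As these constants do not depend on $\sigma$, they serve as the universal woven bounds, which is precisely the desired conclusion. The only genuine obstacle is the bookkeeping in the cancellation step: one must notice that the $\sigma$-part drops out so that the operator difference reduces to a $\sigma^c$-weaving difference, on which the hypothesis can be invoked directly; once that reduction is in place, the remaining estimates are the routine triangle-inequality and Cauchy--Schwarz arguments.
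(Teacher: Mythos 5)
Your proposal is correct and rests on the same core idea as the paper's proof: both exploit the cancellation that the $\sigma$-part of the weaving operator coincides with the $\sigma$-part of $S_{W,\nu}$, so that $S_{W,\nu}-S_{\sigma}$ collapses to the difference $T_{W,\nu}^{\sigma^{c}}U_{W,\nu}^{\sigma^{c}}-T_{V,\mu}^{\sigma^{c}}U_{V,\mu}^{\sigma^{c}}$, which the hypothesis (applied to the subset $\sigma^{c}$) together with the partial-operator norm estimates bounds by $\kappa\Vert f\Vert$, where $\kappa=\lambda\mathcal{B}_{W}+\mu\mathcal{B}_{V}+\gamma\sqrt{\mathcal{B}_{W}}$. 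Where you genuinely differ is in how the conclusion is extracted: the paper estimates the vector norm $\Vert S_{\sigma}f\Vert$ by the reverse triangle inequality, obtaining $\Vert S_{\sigma}f\Vert\geq\left(\mathcal{A}_{W}-\kappa\right)\Vert f\Vert$, whereas you estimate the quadratic form $\left\langle S_{\sigma}f,f\right\rangle$ via Cauchy--Schwarz. Your route is tighter to the definition: the fusion frame condition for the weaving is exactly $\mathcal{A}\Vert f\Vert^{2}\leq\left\langle S_{\sigma}f,f\right\rangle\leq\mathcal{B}\Vert f\Vert^{2}$, so your estimate finishes the proof outright, while the paper's lower bound on $\Vert S_{\sigma}f\Vert$ still requires the unstated (spectral-theoretic) fact that for a positive self-adjoint operator a norm lower bound implies the same lower bound on the quadratic form; Cauchy--Schwarz alone points the wrong way for that step. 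One discrepancy worth flagging: your upper bound comes out as $\mathcal{B}_{W}+\kappa$ rather than the $\mathcal{A}_{W}+\kappa$ claimed in the statement. This is not a defect of your argument: the paper's own computation also produces $\mathcal{B}_{W}\Vert f\Vert+\kappa\Vert f\Vert$, and its final line replacing $\mathcal{B}_{W}$ by $\mathcal{A}_{W}$ (and $\Vert f\Vert$ by $\Vert f\Vert^{2}$) is a slip; $\mathcal{B}_{W}+\kappa$ is the universal upper woven bound that the argument actually supports, and any finite universal upper bound suffices for the W.F.F conclusion.
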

\begin{proof}
By using
$ \Vert T_{W,\nu}^{\sigma} U_{W,\nu}^{\sigma}(f)\Vert\leq\mathcal{B}_{W}\Vert f\Vert $ and
$ \Vert T_{V,\mu}^{\sigma} U_{V,\mu}^{\sigma}(f)\Vert\leq\mathcal{B}_{V}\Vert f\Vert $, for any
$ \sigma\subset\mathbb{I} $ and $ f\in\mathcal{H} $, we compute
\begin{eqnarray*}
&&\Vert T_{W,\nu}^{\sigma} U_{W,\nu}^{\sigma}(f)+T_{V,\mu}^{\sigma^{c}} U_{V,\mu}^{\sigma^{c}}(f)\Vert\\
&= &\Vert T_{W,\nu}^{\sigma} U_{W,\nu}^{\sigma}(f)+T_{W,\nu}^{\sigma^{c}} U_{W,\nu}^{\sigma^{c}}(f)-
T_{W,\nu}^{\sigma^{c}} U_{W,\nu}^{\sigma^{c}}(f)+T_{V,\mu}^{\sigma^{c}} U_{V,\mu}^{\sigma^{c}}(f)\Vert\\
&=& \Vert T_{W,\nu} U_{W,\nu}(f)+T_{V,\mu}^{\sigma^{c}} U_{V,\mu}^{\sigma^{c}}(f)-
T_{W,\nu}^{\sigma^{c}} U_{W,\nu}^{\sigma^{c}}(f)\Vert\\
&\geq & \Vert T_{W,\nu} U_{W,\nu}(f)\Vert - \Vert T_{V,\mu}^{\sigma^{c}} U_{V,\mu}^{\sigma^{c}}(f)-
T_{W,\nu}^{\sigma^{c}} U_{W,\nu}^{\sigma^{c}}(f)\Vert\\
&\geq &\mathcal{A}_{W}\Vert f\Vert -\lambda\Vert T_{W,\nu}^{\sigma} U_{W,\nu}^{\sigma}(f)\Vert -
\mu\Vert T_{V,\mu}^{\sigma} U_{V,\mu}^{\sigma}(f)\Vert -\gamma \Vert U_{W,\nu}^{\sigma}(f)\Vert\\
&\geq & \left( \mathcal{A}_{W}-\lambda\mathcal{B}_{W}-\mu\mathcal{B}_{V}-\gamma\sqrt{B_{W}}\right) \Vert f\Vert.
\end{eqnarray*}
Also, for upper frame bound, we have by hypothesis and first equality of above calculation
\begin{eqnarray*}
&&\Vert T_{W,\nu}^{\sigma} U_{W,\nu}^{\sigma}(f)+T_{V,\mu}^{\sigma^{c}} U_{V,\mu}^{\sigma^{c}}(f)\Vert\\
&=& \Vert T_{W,\nu} U_{W,\nu}(f)+T_{V,\mu}^{\sigma^{c}} U_{V,\mu}^{\sigma^{c}}(f)-
T_{W,\nu}^{\sigma^{c}} U_{W,\nu}^{\sigma^{c}}(f)\Vert\\
&\leq &\Vert T_{W,\nu} U_{W,\nu}(f)\Vert + \Vert T_{V,\mu}^{\sigma^{c}} U_{V,\mu}^{\sigma^{c}}(f)-
T_{W,\nu}^{\sigma^{c}} U_{W,\nu}^{\sigma^{c}}(f)\Vert\\
&\leq &\mathcal{B}_{W}\Vert f\Vert +\lambda\Vert T_{W,\nu}^{\sigma} U_{W,\nu}^{\sigma}(f)\Vert +
\mu\Vert T_{V,\mu}^{\sigma} U_{V,\mu}^{\sigma}(f)\Vert + \gamma \Vert U_{W,\nu}^{\sigma}(f)\Vert\\
&\leq &\left( \mathcal{A}_{W}+\lambda\mathcal{B}_{W}+\mu\mathcal{B}_{V}+\gamma\sqrt{B_{W}}\right)\Vert f\Vert^{2}.
\end{eqnarray*}
Therefore fusion frames $ \left\lbrace W_{i}\right\rbrace _{i\in\mathbb{I} } $ and $ \left\lbrace V_{i}\right\rbrace _{i\in\mathbb{I} } $ are W.F.F, with aforementioned bounds.
\end{proof}

\begin{theorem}
Let $ \left\lbrace W_{i}\right\rbrace _{i\in\mathbb{I} } $ and $ \left\lbrace V_{i}\right\rbrace _{i\in\mathbb{I} } $ be fusion frames for $ \mathcal{H} $ with weights $ \left\lbrace \nu_{i}\right\rbrace _{i\in\mathbb{I}} $  and fusion frame bounds
$ \left( \mathcal{A}_{W}, \mathcal{B}_{W}\right)  $ and $ \left( \mathcal{A}_{V}, \mathcal{B}_{V}\right)  $, respectively. Also, if there exist a constant $ \mathcal{K}>0 $, such that for every
$ \sigma\subset\mathbb{I} $:
$$ \sum_{i\in\sigma}\nu_{i}^{2}\Vert P_{W_{i}}(f)-P_{V_{i}}(f)\Vert \leq\mathcal{K}\min\left\lbrace \sum_{i\in\sigma}\nu_{i}^{2}\Vert P_{W_{i}}(f)\Vert, \sum_{i\in\sigma}\nu_{i}^{2}\Vert P_{V_{i}}(f)\Vert \right\rbrace , $$
then
$ \left\lbrace W_{i}\right\rbrace _{i\in\mathbb{I} } $ and $ \left\lbrace V_{i}\right\rbrace _{i\in\mathbb{I} } $ are $\rm W.F.F $.
\end{theorem}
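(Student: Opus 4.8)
The plan is to exhibit universal constants $\mathcal{A},\mathcal{B}>0$ so that for every $\sigma\subset\mathbb{I}$ the weaving $\left\lbrace W_{i}\right\rbrace_{i\in\sigma}\bigcup\left\lbrace V_{i}\right\rbrace_{i\in\sigma^{c}}$ is a fusion frame with bounds $\mathcal{A}$ and $\mathcal{B}$. The upper bound needs no perturbation hypothesis: for every $\sigma\subset\mathbb{I}$ and every $f\in\mathcal{H}$,
$$ \sum_{i\in\sigma}\nu_{i}^{2}\Vert P_{W_{i}}(f)\Vert^{2}+\sum_{i\in\sigma^{c}}\nu_{i}^{2}\Vert P_{V_{i}}(f)\Vert^{2}\leq\sum_{i\in\mathbb{I}}\nu_{i}^{2}\Vert P_{W_{i}}(f)\Vert^{2}+\sum_{i\in\mathbb{I}}\nu_{i}^{2}\Vert P_{V_{i}}(f)\Vert^{2}\leq\left(\mathcal{B}_{W}+\mathcal{B}_{V}\right)\Vert f\Vert^{2}, $$
exactly as in Proposition \ref{P1}. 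Hence $\mathcal{B}=\mathcal{B}_{W}+\mathcal{B}_{V}$ works and the entire difficulty is concentrated in the lower bound.

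For the lower bound I would fix $\sigma$ and compare the weaving sum with the complete fusion frame sum of $\left\lbrace W_{i}\right\rbrace_{i\in\mathbb{I}}$, namely
$$ \sum_{i\in\sigma}\nu_{i}^{2}\Vert P_{W_{i}}(f)\Vert^{2}+\sum_{i\in\sigma^{c}}\nu_{i}^{2}\Vert P_{V_{i}}(f)\Vert^{2}=\sum_{i\in\mathbb{I}}\nu_{i}^{2}\Vert P_{W_{i}}(f)\Vert^{2}+\sum_{i\in\sigma^{c}}\nu_{i}^{2}\left(\Vert P_{V_{i}}(f)\Vert^{2}-\Vert P_{W_{i}}(f)\Vert^{2}\right). $$
Since $\sum_{i\in\mathbb{I}}\nu_{i}^{2}\Vert P_{W_{i}}(f)\Vert^{2}\geq\mathcal{A}_{W}\Vert f\Vert^{2}$, it suffices to show that the correction sum over $\sigma^{c}$ cannot consume this leading term. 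To estimate it I would factor $\Vert P_{V_{i}}(f)\Vert^{2}-\Vert P_{W_{i}}(f)\Vert^{2}=\left(\Vert P_{V_{i}}(f)\Vert-\Vert P_{W_{i}}(f)\Vert\right)\left(\Vert P_{V_{i}}(f)\Vert+\Vert P_{W_{i}}(f)\Vert\right)$ and invoke the reverse triangle inequality $\left\vert\Vert P_{V_{i}}(f)\Vert-\Vert P_{W_{i}}(f)\Vert\right\vert\leq\Vert P_{W_{i}}(f)-P_{V_{i}}(f)\Vert$, so that the modulus of the correction is at most $\sum_{i\in\sigma^{c}}\nu_{i}^{2}\Vert P_{W_{i}}(f)-P_{V_{i}}(f)\Vert\left(\Vert P_{V_{i}}(f)\Vert+\Vert P_{W_{i}}(f)\Vert\right)$.

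The final step is to feed in the perturbation hypothesis, applied with $\sigma^{c}$ in place of $\sigma$, which bounds $\sum_{i\in\sigma^{c}}\nu_{i}^{2}\Vert P_{W_{i}}(f)-P_{V_{i}}(f)\Vert$ by $\mathcal{K}\min\left\lbrace\sum_{i\in\sigma^{c}}\nu_{i}^{2}\Vert P_{W_{i}}(f)\Vert,\sum_{i\in\sigma^{c}}\nu_{i}^{2}\Vert P_{V_{i}}(f)\Vert\right\rbrace$; the minimum is precisely what lets me compare the correction against whichever of the $W$- or $V$-sums on $\sigma^{c}$ is more convenient and thereby absorb it into a fixed fraction of the leading term. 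I expect the main obstacle to be exactly this absorption: the hypothesis controls a first-power sum, whereas the correction carries the extra factor $\Vert P_{V_{i}}(f)\Vert+\Vert P_{W_{i}}(f)\Vert$, so I would interpose a Cauchy--Schwarz estimate together with the Bessel bounds $\mathcal{B}_{W}$ and $\mathcal{B}_{V}$ to turn the correction into a multiple of $\Vert f\Vert^{2}$ strictly smaller than $\mathcal{A}_{W}$. Securing that the resulting universal lower constant is strictly positive---which is where the relative (min) form of the hypothesis must do its work---is the delicate point; once it is in hand, the two bounds together show that every weaving is a fusion frame with the same $\mathcal{A}$ and $\mathcal{B}$, i.e. $\left\lbrace W_{i}\right\rbrace_{i\in\mathbb{I}}$ and $\left\lbrace V_{i}\right\rbrace_{i\in\mathbb{I}}$ are W.F.F.
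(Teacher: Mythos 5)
Your upper bound is the paper's, and your opening identity (weaving sum equals full $W$-sum plus a correction over $\sigma^{c}$) is a legitimate starting point, but the proof has a genuine gap exactly where you defer it: the plan to ``turn the correction into a multiple of $\Vert f\Vert^{2}$ strictly smaller than $\mathcal{A}_{W}$'' aims at the wrong target and cannot be carried out. The hypothesis is assumed for an \emph{arbitrary} $\mathcal{K}>0$, with no smallness condition, so any correct universal lower bound must degrade as $\mathcal{K}$ grows (the paper obtains $\frac{\mathcal{A}_{W}+\mathcal{A}_{V}}{2\mathcal{K}+1}$). Your route, however, would produce a $\mathcal{K}$-independent lower bound of the form $\epsilon\,\mathcal{A}_{W}$, and every estimate available to you --- reverse triangle inequality, then Cauchy--Schwarz and the Bessel bounds --- turns the correction into a quantity proportional to $\mathcal{K}$ times Bessel-type constants, which nothing in the hypotheses forces below $\mathcal{A}_{W}$. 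The homogeneity mismatch you flag is also real and is not repaired by inserting $\Vert P_{W_{i}}(f)\Vert+\Vert P_{V_{i}}(f)\Vert\leq 2\Vert f\Vert$: the hypothesis as printed is degree one in $f$ while the correction is degree two (the paper silently reads the hypothesis with squared norms, which is evidently the intended statement), and either way the factor sits inside the sum, so you cannot pull it out and stay inside the minimum. In short, the absorption you postpone is not a technical detail; it is the whole theorem, and in the form you set it up it fails.

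The missing idea is that the relative (min) form of the hypothesis is designed for a \emph{multiplicative} absorption into the weaving sum itself, never an additive absorption into $\mathcal{A}_{W}\Vert f\Vert^{2}$. The paper's proof runs as follows: start from
\begin{equation*}
\left(\mathcal{A}_{W}+\mathcal{A}_{V}\right)\Vert f\Vert^{2}\leq\sum_{i\in\mathbb{I}}\nu_{i}^{2}\Vert P_{W_{i}}(f)\Vert^{2}+\sum_{i\in\mathbb{I}}\nu_{i}^{2}\Vert P_{V_{i}}(f)\Vert^{2},
\end{equation*}
split each full sum along $\sigma$ and $\sigma^{c}$, write $P_{W_{i}}=(P_{W_{i}}-P_{V_{i}})+P_{V_{i}}$ on $\sigma^{c}$ and $P_{V_{i}}=(P_{V_{i}}-P_{W_{i}})+P_{W_{i}}$ on $\sigma$, apply $\Vert a+b\Vert^{2}\leq2\Vert a\Vert^{2}+2\Vert b\Vert^{2}$, and then use the hypothesis on $\sigma$ and on $\sigma^{c}$ \emph{separately} to replace each difference sum by $\mathcal{K}$ times the matching projection sum over the same index set. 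Every surviving term is then a piece of the weaving sum $\sum_{i\in\sigma}\nu_{i}^{2}\Vert P_{W_{i}}(f)\Vert^{2}+\sum_{i\in\sigma^{c}}\nu_{i}^{2}\Vert P_{V_{i}}(f)\Vert^{2}$, so the full-frame quantity is bounded by a constant of the form $2\mathcal{K}+O(1)$ times the weaving sum, and dividing gives the universal lower bound. Your own decomposition closes the same way if you redirect the absorption: from $\Vert P_{W_{i}}(f)\Vert^{2}-\Vert P_{V_{i}}(f)\Vert^{2}\leq2\Vert P_{W_{i}}(f)-P_{V_{i}}(f)\Vert^{2}+\Vert P_{V_{i}}(f)\Vert^{2}$, summing over $\sigma^{c}$ and applying the (squared) hypothesis bounds the negative part of your correction by $(2\mathcal{K}+1)\sum_{i\in\sigma^{c}}\nu_{i}^{2}\Vert P_{V_{i}}(f)\Vert^{2}$, a multiple of part of the weaving sum; moving it across the inequality yields that $(2\mathcal{K}+2)$ times the weaving sum dominates $\mathcal{A}_{W}\Vert f\Vert^{2}$. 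The absorption must land on the weaving sum, not on $\Vert f\Vert^{2}$.
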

\begin{proof}
Let $ \sigma\subset\mathbb{I} $ be an arbitrary set. By hypothesis for every $ f\in\mathcal{H} $, we have
\begin{eqnarray*}
&& \left( \mathcal{A}_{W}+\mathcal{A}_{V}\right)\Vert f\Vert^{2} \\
&\leq &\sum_{i\in\mathbb{I}}\nu_{i}^{2}\Vert P_{W_{i}}(f)\Vert^{2}+\sum_{i\in\mathbb{I}}\nu_{i}^{2}\Vert P_{V_{i}}(f)\Vert^{2}\\
&=&\left( \sum_{i\in\sigma}\nu_{i}^{2}\Vert P_{W_{i}}(f)\Vert^{2}
+\sum_{i\in\sigma^{c}}\nu_{i}^{2}\Vert P_{W_{i}}(f)- P_{V_{i}}(f)+P_{V_{i}}(f)\Vert^{2} \right) \\
&+&\left(  \sum_{i\in\sigma}\nu_{i}^{2}\Vert P_{V_{i}}(f)-P_{W_{i}}(f)+P_{W_{i}}(f)\Vert^{2}
+\sum_{i\in\sigma}\nu_{i}^{2}\Vert P_{V_{i}}(f)\Vert^{2}\right) \\
&\leq & \sum_{i\in\sigma}\nu_{i}^{2}\Vert P_{W_{i}}(f)\Vert^{2}+
2\sum_{i\in\sigma^{c}}\nu_{i}^{2}\Vert P_{W_{i}}(f)- P_{V_{i}}(f)\Vert^{2}+
2\sum_{i\in\sigma^{c}}\nu_{i}^{2}\Vert P_{V_{i}}(f)\Vert^{2}\\
&+&2\sum_{i\in\sigma}\nu_{i}^{2}\Vert P_{W_{i}}(f)- P_{V_{i}}(f)\Vert^{2}
+2\sum_{i\in\sigma}\nu_{i}^{2}\Vert P_{W_{i}}(f)\Vert^{2}+\sum_{i\in\sigma^{c}}\nu_{i}^{2}\Vert P_{V_{i}}(f)\Vert^{2}\\
&\leq & \sum_{i\in\sigma}\nu_{i}^{2}\Vert P_{W_{i}}(f)\Vert^{2}+
2\left(\mathcal{K}\sum_{i\in\sigma^{c}}\nu_{i}^{2}\Vert P_{V_{i}}(f)+\sum_{i\in\sigma^{c}}\nu_{i}^{2}\Vert P_{V_{i}}(f)\Vert^{2}\right)\\
&+&2\left(\mathcal{K}\sum_{i\in\sigma}\nu_{i}^{2}\Vert P_{W_{i}}(f)+\sum_{i\in\sigma}\nu_{i}^{2}\Vert P_{W_{i}}(f)\Vert^{2}\right)+\sum_{i\in\sigma^{c}}\nu_{i}^{2}\Vert P_{V_{i}}(f)\Vert^{2}\\
&=&\left( 2\mathcal{K}+1\right) \left(\sum_{i\in\sigma}\nu_{i}^{2}\Vert P_{W_{i}}(f)\Vert^{2} +\sum_{i\in\sigma^{c}}\nu_{i}^{2}\Vert P_{V_{i}}(f)\Vert^{2}\right),
\end{eqnarray*}
then
$$ \dfrac{\mathcal{A}_{W}+\mathcal{A}_{V}}{2\mathcal{K}+1}\Vert f\Vert^{2}\leq\sum_{i\in\sigma}\nu_{i}^{2}\Vert P_{W_{i}}(f)\Vert^{2} +\sum_{i\in\sigma^{c}}\nu_{i}^{2}\Vert P_{V_{i}}(f)\Vert^{2}\leq\left( \mathcal{B}_{W}+\mathcal{B}_{V}\right) \Vert f\Vert^{2}.$$
Since $ \sigma\subset\mathbb{I} $ is arbitrary, therefor $ \left\lbrace W_{i}\right\rbrace _{i\in\mathbb{I} } $ and
$ \left\lbrace V_{i}\right\rbrace _{i\in\mathbb{I} } $ are W.F.F.
\end{proof}
\section{Riesz decomposition of woven of subspaces}
\begin{definition}
Woven fusion frames $ \left\lbrace W_{i}\right\rbrace _{i\in\mathbb{I}} $ and $ \left\lbrace V_{i}\right\rbrace _{i\in\mathbb{I}} $ of $ \mathcal{H} $ with respect to some weights, are called woven Riesz decomposition for $ \mathcal{H} $, if for every $ f\in\mathcal{H} $,
there are unique elements $ f_{i}\in W_{i} $ and $ g_{i}\in V_{i} $, such that $ f=\sum_{i\in\sigma}f_{i}+\sum_{i\in\sigma^{c}}g_{i} $, for any partition $ \sigma\subset\mathbb{I} $.
\end{definition}
\begin{corollary}
\label{C1}
\cite{caku1}
If $ \left\lbrace W_{i}\right\rbrace _{i\in\mathbb{I}} $ is an orthonormal basis of subspaces for $ \mathcal{H} $, Then it is also a Riesz decomposition of $ \mathcal{H} $.
\end{corollary}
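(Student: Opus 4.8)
The plan is to unwind the two definitions involved. Recall that $\{W_{i}\}_{i\in\mathbb{I}}$ being an orthonormal fusion basis means $\mathcal{H}=\bigoplus_{i\in\mathbb{I}}W_{i}$, so the subspaces are mutually orthogonal and their closed span is all of $\mathcal{H}$. In particular it is a Parseval fusion frame with unit weights $\nu_{i}=1$, since then $\sum_{i\in\mathbb{I}}\Vert P_{W_{i}}(f)\Vert^{2}=\Vert f\Vert^{2}$ for every $f$; hence it is a fusion frame, which is the part of the definition of a Riesz decomposition that is not about the representation itself. It then remains to establish the existence and uniqueness of the componentwise representation $f=\sum_{i\in\mathbb{I}}f_{i}$ with $f_{i}\in W_{i}$.

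For existence, given $f\in\mathcal{H}$ I would set $f_{i}:=P_{W_{i}}(f)\in W_{i}$. Because the family is an orthogonal decomposition of $\mathcal{H}$, the Parseval identity $\sum_{i\in\mathbb{I}}\Vert P_{W_{i}}(f)\Vert^{2}=\Vert f\Vert^{2}$ guarantees that $\{f_{i}\}_{i\in\mathbb{I}}$ lies in the representation space $\bigl(\sum_{i\in\mathbb{I}}\bigoplus W_{i}\bigr)_{\ell_{2}}$ and that the series $\sum_{i\in\mathbb{I}}f_{i}$ converges; its sum equals $f$ because the orthogonal projections onto the components of a direct-sum decomposition reconstruct every vector. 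The unconditional convergence is exactly the content of the summability lemma recalled earlier for Bessel sequences of subspaces.

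For uniqueness, suppose $f=\sum_{i\in\mathbb{I}}f_{i}=\sum_{i\in\mathbb{I}}g_{i}$ with $f_{i},g_{i}\in W_{i}$, so that $\sum_{i\in\mathbb{I}}(f_{i}-g_{i})=0$. Fixing an index $j$, I would pair this identity with an arbitrary $h\in W_{j}$: by the mutual orthogonality $W_{i}\perp W_{j}$ for $i\neq j$, every term vanishes except $\langle f_{j}-g_{j},h\rangle$, which forces $f_{j}=g_{j}$. Equivalently, applying $P_{W_{j}}$ to $f=\sum_{i\in\mathbb{I}}f_{i}$ shows $f_{j}=P_{W_{j}}(f)$, so the components are completely determined by $f$.

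The argument is essentially routine; the only point requiring a little care is the convergence and unconditional summability of $\sum_{i\in\mathbb{I}}f_{i}$, which is supplied by the Parseval identity available for an orthonormal fusion basis together with the unconditional convergence lemma stated above. I expect no genuine obstacle here beyond keeping the definitions of orthonormal fusion basis and Riesz decomposition straight and verifying that the former supplies the fusion-frame hypothesis implicit in the latter.
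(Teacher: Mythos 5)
Your proof is correct, but note that the paper itself offers no proof of this corollary at all: it is imported from the Casazza--Kutyniok paper \cite{caku1} as a citation, so there is nothing internal to compare against. Your self-contained argument is the standard one and fills that gap cleanly: the orthogonal decomposition $\mathcal{H}=\bigoplus_{i\in\mathbb{I}}W_{i}$ gives the Parseval identity $\sum_{i\in\mathbb{I}}\Vert P_{W_{i}}(f)\Vert^{2}=\Vert f\Vert^{2}$ with unit weights (so the fusion-frame hypothesis in the definition of Riesz decomposition is satisfied), existence of the representation comes from $f_{i}=P_{W_{i}}(f)$ together with the unconditional-convergence lemma for Bessel sequences of subspaces, and uniqueness follows by pairing $\sum_{i\in\mathbb{I}}(f_{i}-g_{i})=0$ against an arbitrary $h\in W_{j}$ and using the mutual orthogonality $W_{i}\perp W_{j}$ for $i\neq j$ (continuity of the inner product lets you pass it through the norm-convergent sum). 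The only step you state without justification is that $\sum_{i\in\mathbb{I}}P_{W_{i}}(f)=f$; this is standard, and if you want it fully explicit, observe that $f-\sum_{i\in\mathbb{I}}P_{W_{i}}(f)$ is orthogonal to every $W_{j}$, hence to the dense span of the family, hence zero.
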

\begin{theorem}
Let $ \left\lbrace W_{i}\right\rbrace _{i\in\mathbb{I}} $ and $ \left\lbrace V_{i}\right\rbrace _{i\in\mathbb{I}} $ be orthonormal fusion basis for $ \mathcal{H} $ and also $ \left\lbrace e_{ij}\right\rbrace _{j\in\mathbb{J}_{i} } $ is an orthonormal basis for $ W_{i} $ and $ V_{i} $ for every $ i\in\mathbb{I} $. Then $ \left\lbrace W_{i}\right\rbrace _{i\in\mathbb{I}} $ and
$ \left\lbrace V_{i}\right\rbrace _{i\in\mathbb{I}} $ are woven Riesz decomposition.
\end{theorem}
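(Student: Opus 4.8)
The plan is to reduce everything to Corollary \ref{C1} after a single structural observation. The decisive point is that sharing an orthonormal basis forces the subspaces to coincide: since $\left\lbrace e_{ij}\right\rbrace_{j\in\mathbb{J}_{i}}$ is simultaneously an orthonormal basis for $W_{i}$ and for $V_{i}$, both subspaces equal the closed span of the \emph{same} orthonormal system, whence $W_{i}=V_{i}=\overline{\rm span}\left\lbrace e_{ij}\right\rbrace_{j\in\mathbb{J}_{i}}$ for every $i\in\mathbb{I}$. Once this identification is recorded, every weaving of the two families collapses onto the single orthonormal fusion basis $\left\lbrace W_{i}\right\rbrace_{i\in\mathbb{I}}$.

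First I would confirm that $\left\lbrace W_{i}\right\rbrace_{i\in\mathbb{I}}$ and $\left\lbrace V_{i}\right\rbrace_{i\in\mathbb{I}}$ are $\rm W.F.F$, which is part of the definition of a woven Riesz decomposition. Fix a partition $\sigma\subset\mathbb{I}$ and take the natural weights $\nu_{i}=\mu_{i}=1$. Using $W_{i}=V_{i}$ together with the Parseval identity $\sum_{i\in\mathbb{I}}\Vert P_{W_{i}}(f)\Vert^{2}=\Vert f\Vert^{2}$ supplied by the orthonormal fusion basis,
\begin{eqnarray*}
\sum_{i\in\sigma}\nu_{i}^{2}\Vert P_{W_{i}}(f)\Vert^{2}+\sum_{i\in\sigma^{c}}\mu_{i}^{2}\Vert P_{V_{i}}(f)\Vert^{2}
=\sum_{i\in\mathbb{I}}\Vert P_{W_{i}}(f)\Vert^{2}=\Vert f\Vert^{2},
\end{eqnarray*}
so every weaving is a Parseval fusion frame and the two families are $\rm W.F.F$ with universal woven bounds $\mathcal{A}=\mathcal{B}=1$.

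For the decomposition itself I would invoke Corollary \ref{C1}: an orthonormal fusion basis is a Riesz decomposition, so each $f\in\mathcal{H}$ has a unique expansion $f=\sum_{i\in\mathbb{I}}h_{i}$ with $h_{i}\in W_{i}$. Now fix any partition $\sigma$. A representation $f=\sum_{i\in\sigma}f_{i}+\sum_{i\in\sigma^{c}}g_{i}$ with $f_{i}\in W_{i}$ and $g_{i}\in V_{i}=W_{i}$ is exactly such an expansion, obtained by setting $h_{i}=f_{i}$ for $i\in\sigma$ and $h_{i}=g_{i}$ for $i\in\sigma^{c}$. Existence then comes from taking the Riesz expansion of $f$ and splitting it along $\sigma$; uniqueness is inherited from Corollary \ref{C1}, since two different woven representations of the same $f$ would produce two different expansions $\sum_{i}h_{i}$. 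As $\sigma\subset\mathbb{I}$ was arbitrary, $\left\lbrace W_{i}\right\rbrace_{i\in\mathbb{I}}$ and $\left\lbrace V_{i}\right\rbrace_{i\in\mathbb{I}}$ form a woven Riesz decomposition.

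The only genuinely delicate step is the opening identification $W_{i}=V_{i}$; I would state it explicitly, because the entire argument rests on it, and after it the theorem is essentially Corollary \ref{C1} read partition-by-partition, with the remaining steps amounting to bookkeeping rather than analysis.
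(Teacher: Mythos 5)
Your proposal is correct and takes essentially the same route as the paper: both proofs hinge on the identification $W_{i}=V_{i}$ forced by the shared orthonormal basis, so that every weaving collapses to the single orthonormal fusion basis $\left\lbrace W_{i}\right\rbrace_{i\in\mathbb{I}}$, and then Corollary \ref{C1} is applied partition-by-partition. If anything, your version is tidier: you get $W_{i}=V_{i}$ directly as the closed span of the same orthonormal system (the paper reaches it more awkwardly through uniqueness of the expansions $f=\sum_{i}f_{i}$ and $f=\sum_{i}f^{\prime}_{i}$), and your explicit Parseval computation giving universal woven bounds $\mathcal{A}=\mathcal{B}=1$ makes the W.F.F.\ step precise where the paper merely asserts it.
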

\begin{proof}
The hypothesis and Corollary \ref{C1}, show that $ \left\lbrace W_{i}\right\rbrace _{i\in\mathbb{I}} $ and
$ \left\lbrace V_{i}\right\rbrace _{i\in\mathbb{I}} $ are Riesz decompositions and fusion frames. Thus for every $ f\in\mathcal{H} $, we have:
$$ \mathcal{H}=\bigoplus_{i\in\mathbb{I}}W_{i}\quad \Rightarrow\quad \forall f\in\mathcal{H},~ \exists ! f_{i}\in W_{i}\quad s.t\quad f=\sum_{i\in\mathbb{I}}f_{i} $$
and
$$ \mathcal{H}=\bigoplus_{i\in\mathbb{I}}V_{i}\quad \Rightarrow\quad \forall f\in\mathcal{H},~ \exists ! f^{\prime}_{i}\in V_{i}\quad s.t\quad f=\sum_{i\in\mathbb{I}}f^{\prime}_{i}. $$
Since $ \left\lbrace e_{ij}\right\rbrace _{j\in\mathbb{J}_{i} } $ is an orthonormal basis for $ W_{i} $ and $ V_{i} $, for every
$ i\in\mathbb{I} $, therefor $ f_{i}=f^{\prime}_{i} $ and so $ W_{i} $ coincide to $ V_{i} $ for every
$ i\in\mathbb{I} $. Then for every
$ \sigma\subset\mathbb{I} $, $ \left\lbrace W_{i}\right\rbrace _{i\in\sigma} \bigcup\left\lbrace V_{i}\right\rbrace _{i\in\sigma^{c}} $ is a fusion frame and so $ \left\lbrace W_{i}\right\rbrace _{i\in\mathbb{I}} $ and
$ \left\lbrace V_{i}\right\rbrace _{i\in\mathbb{I}} $ are W.F.F .

Now from the fact that
$ \left\lbrace W_{i}\right\rbrace _{i\in\sigma} \bigcup\left\lbrace V_{i}\right\rbrace _{i\in\sigma^{c}}= \left\lbrace W_{i}\right\rbrace _{i\in\mathbb{I}}$ is orthonormal basis for $ \sigma\subset\mathbb{I} $, we conclude that
$ \left\lbrace W_{i}\right\rbrace _{i\in\sigma} \bigcup\left\lbrace V_{i}\right\rbrace _{i\in\sigma^{c}} $ is Riesz decomposition.
Since $ \sigma\subset\mathbb{I} $ is arbitrary, then $ \left\lbrace W_{i}\right\rbrace _{i\in\mathbb{I}} $ and
$ \left\lbrace V_{i}\right\rbrace _{i\in\mathbb{I}} $ are woven Riesz decomposition.
\end{proof}
\begin{corollary}
\label{C2}
If for every $ \sigma\subset\mathbb{I} $,
the family $ \left\lbrace W_{i}\right\rbrace _{i\in\sigma} \bigcup\left\lbrace V_{i}\right\rbrace _{i\in\sigma^{c}} $ is a orthonormal basis, Then
$ \left\lbrace W_{i}\right\rbrace _{i\in\mathbb{I}} $ and $ \left\lbrace V_{i}\right\rbrace _{i\in\mathbb{I}} $ are woven Riesz decomposition.
\end{corollary}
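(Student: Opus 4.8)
The plan is to reduce the statement directly to Corollary \ref{C1}, applied to each weaving. First I would fix an arbitrary partition $\sigma\subset\mathbb{I}$ and consider the mixed family $\left\lbrace W_{i}\right\rbrace _{i\in\sigma}\bigcup\left\lbrace V_{i}\right\rbrace _{i\in\sigma^{c}}$. By hypothesis this family is an orthonormal fusion basis for $\mathcal{H}$, i.e.\ $\mathcal{H}=\bigoplus_{i\in\sigma}W_{i}\oplus\bigoplus_{i\in\sigma^{c}}V_{i}$. Since an orthonormal fusion basis is in particular a Parseval fusion frame (the bounds being $\mathcal{C}=\mathcal{D}=1$), and this holds for every choice of $\sigma$ with the \emph{same} bounds, the two families are automatically W.F.F.\ with universal woven bounds $\mathcal{A}=\mathcal{B}=1$. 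This secures the prerequisite that $\left\lbrace W_{i}\right\rbrace$ and $\left\lbrace V_{i}\right\rbrace$ are woven fusion frames, which the definition of woven Riesz decomposition requires.

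Next I would invoke Corollary \ref{C1}: because $\left\lbrace W_{i}\right\rbrace _{i\in\sigma}\bigcup\left\lbrace V_{i}\right\rbrace _{i\in\sigma^{c}}$ is an orthonormal basis of subspaces, it is a Riesz decomposition of $\mathcal{H}$. Unwinding the definition of a Riesz decomposition, this means that for every $f\in\mathcal{H}$ there is a unique choice of elements $f_{i}\in W_{i}$ (for $i\in\sigma$) and $g_{i}\in V_{i}$ (for $i\in\sigma^{c}$) with
$$ f=\sum_{i\in\sigma}f_{i}+\sum_{i\in\sigma^{c}}g_{i}. $$

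Finally, since the partition $\sigma$ was arbitrary, the representation above exists and is unique for every $f\in\mathcal{H}$ and every $\sigma\subset\mathbb{I}$, which is exactly the definition of a woven Riesz decomposition; hence $\left\lbrace W_{i}\right\rbrace _{i\in\mathbb{I}}$ and $\left\lbrace V_{i}\right\rbrace _{i\in\mathbb{I}}$ form a woven Riesz decomposition. I do not expect a genuine obstacle here: the entire content is packaged in Corollary \ref{C1}, and the only point worth stating carefully is that the existence and uniqueness it supplies hold simultaneously for all partitions, which is immediate because the argument never used any specific feature of the chosen $\sigma$.
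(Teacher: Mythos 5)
Your proposal is correct and follows essentially the same route as the paper, whose proof is the one-line observation that the result follows from Corollary \ref{C1} together with the definition of woven subspaces. You merely expand this: applying Corollary \ref{C1} to each weaving $\left\lbrace W_{i}\right\rbrace _{i\in\sigma}\bigcup\left\lbrace V_{i}\right\rbrace _{i\in\sigma^{c}}$ and noting (as the definition of woven Riesz decomposition implicitly requires) that each such orthonormal fusion basis is a Parseval fusion frame, giving universal woven bounds $1$.
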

\begin{proof}
The result follows from Corollary \ref{C1} and the definition of woven of subspaces.
\end{proof}
From Corollary \ref{C2}, we have:
\begin{example}
Let $ \left\lbrace e_{i}\right\rbrace _{i\in\mathbb{I} } $ be an orthonormal basis for Hilbert space $ \mathcal{H} $ and define the subspaces:
$$ W_{1}=\overline{{\rm span}}\left\lbrace e_{2i}\right\rbrace _{i= 1}^{\infty} \quad and \quad
W_{2}=\overline{{\rm span}}\left\lbrace e_{2i-1}\right\rbrace _{i= 1}^{\infty}.$$
Since $ \mathcal{H}=W_{1}\oplus W_{2} $, then $\left\lbrace W_{1}, W_{2}\right\rbrace $ is a Parseval fusion frame for $ \mathcal{H} $, with respect to
$ \left\lbrace \nu_{i}\right\rbrace _{i\in\mathbb{I} } $ such that for every $ i\in\mathbb{I} $, $ \nu_{i}=1 $:
\begin{eqnarray*}
\Vert P_{W_{1}}(f)\Vert^{2}+\Vert P_{W_{2}}(f)\Vert^{2}&=&
\sum_{i=1}^{\infty}\vert\left\langle P_{W_{1}}(f), e_{i}\right\rangle \vert^{2}
+\sum_{i=1}^{\infty}\vert\left\langle P_{W_{2}}(f), e_{i}\right\rangle \vert^{2}\\
&=&\sum_{i=1}^{\infty}\vert\left\langle f, e_{2i}\right\rangle \vert^{2}
+ \sum_{i=1}^{\infty}\vert\left\langle f, e_{2i-1}\right\rangle \vert^{2}\\
&=&\sum_{i=1}^{\infty}\vert\left\langle f, e_{i}\right\rangle \vert^{2}\\&=&\Vert f\Vert^{2}.
\end{eqnarray*}
For constant $ \delta >0 $, we define the subspaces $ V_{1} $ and $ V_{2} $:
$$ V_{1}=\overline{{\rm span}}\left\lbrace\delta e_{2i-1}\right\rbrace _{i\in 1}^{\infty} \quad and \quad
V_{1}=\overline{{\rm span}}\left\lbrace\delta e_{2i}\right\rbrace _{i\in 1}^{\infty}.$$
Similarly $\left\lbrace V_{1}, V_{2}\right\rbrace $ is a tight fusion frame with bound $\delta $.
Besides, both of these fusion frames are orthonormal basis for $ \mathcal{H} $. So both of them are Riesz decomposition. Now from Corollary \ref{C2}, $ \left\lbrace W_{i}\right\rbrace _{i=1}^{2} $ and $ \left\lbrace V_{i}\right\rbrace _{i=1}^{2} $ are woven Riesz decomposition.
\end{example}



\end{document}